\newtheorem{thm}{Theorem}[section]
\newtheorem{prop}[thm]{Proposition}
\newtheorem{cor}[thm]{Corollary}
\newtheorem{lem}[thm]{Lemma}
\newtheorem{defn}[thm]{Definition}
\newtheorem{remark}[thm]{Remark}
\newtheorem{example}[thm]{Example}
\newtheorem{examples}[thm]{Examples}
\newenvironment{steps}[1]{%
\refstepcounter{thm}\noindent{\bf (\thethm)\ {#1}}}
{\em}
      \newcommand{\N}{{\mathbb N}}
\newcommand{\Z}{{\mathbb Z}}      \newcommand{\R}{{\mathbb R}}
      \newcommand{\C}{{\mathbb C}}
\newcommand{\HH}{{\mathbb H}}   
      \newcommand{\Cont}{{\mathcal C}}
\newcommand{\LIP}{\operatorname{LIP}}
\newcommand{\Lip}{\operatorname{Lip}}
\newcommand{\Mod}{\operatorname{Mod}}
\newcommand{\veps}{\varepsilon}
\newcommand{\jint}{\int\hspace{-3.8mm}\frac{\,\,\,}{}}
\newcommand{\jintb}{\int\hspace{-2.75mm}\frac{\,\,}{}}
\newcommand{\fin}{\hspace*{\fill}$\square$\\ }
\newcommand{\rotup}{\begin{rotate}{45}\hspace*{-3mm}$\subsetneq$\end{rotate}}
\newcommand{\rotdown}{\begin{rotate}{-45}\hspace*{-3mm}$\subsetneq$\end{rotate}}
\newcommand{\rotupa}{\begin{rotate}{45}$\subsetneq$\end{rotate}}
\newcommand{\rotdowna}{\begin{rotate}{-45}$\subsetneq$\end{rotate}}
\subjclass{46E15, 46E35}
\begin{document}
\title{infinitesimally Lipschitz functions on metric spaces}
\author{E. Durand and J.A. Jaramillo}
\address{Departamento de An{\'a}lisis Matem{\'a}tico, Universidad Complutense de Madrid, 28040 Madrid, Spain}
\email{estibalitzdurand@mat.ucm.es} \email{jaramil@mat.ucm.es}
\thanks{Research partially supported by DGES (Spain) MTM2006-03531.}

\begin{abstract}
For a metric space $X$, we study the space $D^{\infty}(X)$ of
bounded functions on $X$ whose infinitesimal Lipschitz constant is
uniformly bounded. $D^{\infty}(X)$ is compared with the space
$\LIP^{\infty}(X)$ of bounded Lipschitz functions on $X$, in terms
of different properties regarding the geometry of $X$. We also
obtain a Banach-Stone theorem in this context. In the case of a
metric measure space, we also compare $D^{\infty}(X)$ with the
Newtonian-Sobolev space $N^{1, \infty}(X)$. In particular, if $X$
supports a doubling measure and satisfies a local Poincar{\'e}
inequality, we obtain that $D^{\infty}(X)=N^{1, \infty}(X)$.
\end{abstract}
\maketitle

\section{Introduction}
Recent years have seen many advances in geometry and analysis, where
first order differential calculus has been extended to the setting
of spaces with no a priori smooth structure; see for instance
\cite{Amb,hei,He2,Sem}. The notion of derivative measures the
infinitesimal oscillations of a function at a given point, and gives
information concerning for instance monotonocity. In general metric
spaces we do not have a derivative, even in the weak sense of
Sobolev spaces. Nevertheless, if $f$ is a real-valued function on a
metric space $(X,d)$ and $x$ is a point in $X$, one can use similar
measurements of sizes of first-order oscillations of $f$ at small
scales around $x$, such as
$$
D_{r}f(x)=\frac{1}{r}\sup\Big\{|f(y)-f(x)|:y\in X, d(x,y)\leq r\Big
\}.
$$
On one hand, this quantity does not contain as much information as
standard derivatives on Euclidean spaces does (since we omit the
signs) but, on the other hand, it makes sense in more general
settings since we do not need any special behavior of the underlying
space to define it. In fact, if we look at the superior limit of the
above expression as $r$ tends to $0$ we almost recover in many
cases, as in the Euclidean or Riemannian setting, the standard
notion of derivative. More precisely, given a continuous function $f:X\to\R$,
the \em infinitesimal Lipschitz constant \em at a point $x\in X$ is
defined as follows:
$$
\Lip f(x)=\limsup_{r\to 0}D_r f(x)=\limsup_{\substack{y\to x\\ y\neq
x}} \frac{|f(x)-f(y)|}{d(x,y)}.
$$
Recently, this functional has played an important role in several
contexts. We just mention here the construction of differentiable
structures in the setting of metric measure spaces \cite{Che,Kei} ,
the theory of upper gradients \cite{HK,Sh1}, or the Stepanov's
differentiability theorem \cite{BRZ}.

This concept gives rise to a class of function spaces, \em
infinitesimally Lipschitz function spaces\em, which contains in some
sense infinitesimal information about the functions,
$$D(X)=\{f:X\longrightarrow\R:\, \|\Lip
f\|_{\infty}<+\infty\}.$$

This space $D(X)$ clearly contains the space $\LIP(X)$ of Lipschitz
function and a first approach should be comparing such spaces. In
Corollary \ref{main2} we give sufficient conditions on  the metric
space $X$ to guarantee the equality between  $D(X)$ and $\LIP(X)$. A
powerful tool which transforms bounds on infinitesimal oscillation
to bounds on maximal oscillation is a kind of mean value theorem
(see Lemma $2.5$ in \cite{Sem}). In fact, the largest class of
spaces for which we obtain a positive answer is the class of
quasi-length spaces, which has a characterization in terms of such
mean type value theorem. In particular, this class includes
quasi-convex spaces. In addition, we present some examples for which
$\LIP(X)\neq D(X)$ (see Examples \ref{ejem} and \ref{cuspide}).

At this point, it seems natural to approach the problem of
determining which kind of spaces can be classified by their
infinitesimal Lipschitz structure. Our strategy will be to follow
the proof in \cite{GaJa2} where the authors find a large class of
metric spaces for which the algebra of bounded Lipschitz functions
determines the Lipschitz structure for $X$. A crucial point in the
proof is the use of the Banach space structure of $\LIP(X)$. Thus,
we endow $D(X)$ with a norm  which arises naturally from the
definition of the operator $\Lip$. This norm is not complete  in the
general case, as it can be seen in Example \ref{nobanach}. However,
there is a wide class of spaces, the locally radially quasiconvex
metric spaces (see Definition \ref{locradqua}), for which
$D^{\infty}(X)$ (bounded infinitesimally Lipschitz functions) admits
the desired Banach space structure. Moreover, for such spaces, we
obtain a kind of Banach-Stone theorem in this framework (see Theorem
\ref{banachstone}).

If we have a measure on the metric space, we can deal with many more
problems. In this line, there are for example generalizations of
classical Sobolev spaces to the setting of arbitrary metric measure
spaces. It seems that Hajs{\l}az  was the first who introduced
Sobolev type spaces in this context \cite{haj1}. He defined the
spaces $M^{1,p}(X)$ for $1\leq p\leq \infty$ in connection with
maximal operators. It is well known that $M^{1,\infty}(X)$ is in
fact the space of bounded Lipschitz functions on $X$. Shanmugalingam
in \cite{Sh1} introduced, using the notion of upper gradient (and
more generally weak upper gradients) the Newtonian spaces
$N^{1,p}(X)$ for $1\leq p<\infty$. The generalization to the case
$p=\infty$ is straightforward and we will compare the function
spaces $D^{\infty}(X)$ and $\LIP^{\infty}(X)$ with such Sobolev
space, $N^{1,\infty}$. From Cheeger's work \cite{Che}, metric spaces
with a doubling measure and a Poincaré inequality admit a
differentiable structure with which Lipschitz functions can be
differentiated almost everywhere. Under the same hypotheses we prove
in Corollary \ref{NLip} the equality of all the mentioned spaces.
Furthermore, if we just require a local Poincaré inequality we
obtain $M^{1,\infty}(X)\subseteq D^{\infty}(X)= N^{1,\infty}(X)$.
For further information about more generalizations of Sobolev spaces
on metric measure spaces see \cite{haj}.

We organized the work as follows. In Section $2$ we will introduce
\em infinitesimally Lipschitz function spaces \em $D(X)$ and we look
for conditions regarding the geometry of the metric spaces we are
working with in order to understand in which cases the infinitesimal
Lipschitz information yields the global Lipschitz behavior of a
function. Moreover, we show the existence of metric spaces for which
$\LIP(X)\subsetneq D(X)$. In Section $3$ we introduce the class of
\em locally radially quasiconvex metric spaces \em and we prove that
the space of bounded infinitesimally Lipschitz function can be
endowed with a natural Banach space structure. The purpose of
Section $4$ is to state a kind of Banach-Stone theorem in this
context while the aim of Section $5$ is to compare the function
spaces $D^{\infty}(X)$ and $\LIP^{\infty}(X)$ with Sobolev spaces in
metric measure spaces.

\section{infinitesimally Lipschitz functions}
Let $(X,d)$ be a metric space. Given a function $f:X\to\R$, the \em
infinitesimal Lipschitz constant \em of $f$ at a non isolated point
$x\in X$ is defined as follows:
$$
\Lip f(x)=\limsup_{\substack{y\to x\\ y\neq x}}
\frac{|f(x)-f(y)|}{d(x,y)}.
$$
If $x$ is an isolated point we define $\Lip f(x)=0$. This value is
also known as \em upper scaled oscillation \em (see \cite{BRZ}) or
as \em pointwise infinitesimal Lipschitz number \em (see
\cite{He2}).

\begin{examples}\em
$(1)$ If $f\in C^1(\Omega)$ where $\Omega$ is an open subset of
Euclidean space, or of a Riemannian manifold, then $ \Lip f=|\nabla
f|.$

\vspace{2mm} $(2)$ Let $\HH$ be the first Heisenberg group, and
consider an open subset $\Omega\subset\mathbb{H}$. If $f\in
C_{H}^1(\Omega)$, that is, $f$ is $H-$continu\-ously differentiable
in $\Omega$, then $ \Lip f=|\nabla_{H}f|$ where $\nabla_{H}f$
denotes the horizontal gradient of $f$. For further details see
\cite{Ma}.

\vspace{2mm} (3) If $(X,d,\mu)$ is a metric measure space which
admits a measurable differentiable structure $\{(X_{\alpha},{\bf
x}_{\alpha})\}_{\alpha}$ and $f\in\LIP(X)$, then $ \Lip
f(x)=|d^{\alpha}f(x)|\,\, \text{$\mu-$a.e.},$ where $d^{\alpha}f$
denotes the Cheeger's differential. For further information about
measurable differentiable structures see \cite{Che,Kei}.
\end{examples}

Loosely speaking, the operator $\Lip f$ estimates some kind of
infinitesimal lipschitzian property around each point. Our first aim
is to see under which conditions a function $f:X\to\R$ is Lipschitz
if and only if $\Lip f$ is a bounded functional. It is clear that if
$f$ is a $L-$Lipschitz function, then $\Lip f(x)\leq L$ for every
$x\in X$. More precisely, we consider the following spaces of
functions:
\begin{itemize}\item[$\diamond$] $\LIP(X)=\{f:X\longrightarrow\R: f  \text{ is
Lipschitz}\}$\item[$\diamond$] $D(X)=\{f:X\longrightarrow\R:\,
\sup_{x\in X}\Lip f(x)=\|\Lip f\|_{\infty}<+\infty\}.$\end{itemize}

We denote by $\LIP^{\infty}(X)$ (respectively $D^{\infty}(X)$) the
space of bounded Lipschitz functions (respectively, bounded
functions which are in $D(X)$) and $\Cont(X)$ will denote the space
of continuous functions on   $X$. It is not difficult to see that
for $f\in D(X)$, $\Lip f$ is a Borel function on $X$ and that
$\|\Lip (\cdot)\|_{\infty}$ yields a seminorm in $D(X)$. In what
follows, $\|\cdot\|_{\infty}$ will denote the supremum norm whereas
$\|\cdot\|_{L^{\infty}}$ will denote the essential supremum norm,
provided we have a measure on $X$. In addition, $\LIP(\cdot)$ will
denote the Lipschitz constant.

Since functions with uniformly bounded infinitesimal Lipschitz
constant  have a flavour of differentiability it seems reasonable to
determine if the infinitesimally Lipschitz functions are in fact
continuous. Namely,

\begin{lem}\label{dcont}
Let $(X,d)$ be a metric space. Then $D(X)\subset\Cont(X)$. \end{lem}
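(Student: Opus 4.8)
The plan is to show that any $f \in D(X)$ is continuous at an arbitrary point $x \in X$. If $x$ is an isolated point there is nothing to prove, so assume $x$ is non-isolated. The key observation is that $\Lip f(x) < +\infty$ is not merely a statement about a limit superior of difference quotients but genuinely controls the oscillation of $f$ near $x$ at scale comparable to distance. Concretely, set $L = \|\Lip f\|_\infty < +\infty$; I would first note that by definition of the limit superior, there exists $r_0 > 0$ such that
\[
\frac{|f(y) - f(x)|}{d(x,y)} \leq L + 1 \quad \text{for all } y \text{ with } 0 < d(x,y) \leq r_0.
\]

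From this the continuity of $f$ at $x$ is immediate: given $\veps > 0$, choose $\delta = \min\{r_0, \veps/(L+1)\}$; then for $d(x,y) < \delta$ we get $|f(y) - f(x)| \leq (L+1)\, d(x,y) < \veps$. The only subtlety worth spelling out is why such an $r_0$ exists: this is exactly the meaning of $\limsup_{y \to x} \frac{|f(x)-f(y)|}{d(x,y)} = \Lip f(x) \leq L < L + 1$, since if no such radius existed one could extract a sequence $y_n \to x$ with difference quotients exceeding $L+1$, forcing $\Lip f(x) \geq L + 1$, a contradiction.

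I do not anticipate any real obstacle here — the statement is essentially unwinding the definition of $\Lip f(x)$ and using that it is finite (indeed uniformly bounded, though pointwise finiteness at each point already suffices for continuity at that point). The one place to be slightly careful is the bookkeeping around isolated points: the definition assigns $\Lip f(x) = 0$ at isolated points, and continuity at an isolated point is automatic, so these points require no argument. Thus the proof reduces to the two-line $\veps$–$\delta$ estimate above applied at each non-isolated point.
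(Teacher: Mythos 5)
Your proof is correct and follows essentially the same route as the paper's: both unwind the definition of $\Lip f(x)$ as a limit superior to extract a radius $r_0$ on which the difference quotient is bounded (you use $L+1$ where the paper uses $M+\varepsilon$), and then conclude with the elementary estimate $|f(x)-f(y)|\leq (L+1)\,d(x,y)$. Your explicit remark that pointwise finiteness of $\Lip f$ suffices, and your handling of isolated points, are both fine and consistent with the paper.
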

\begin{proof}
Let $x_0\in X$ be a non isolated point and $f\in D(X)$. We are going
to see that $f$ is continuous at $x_0$. Since $f\in D(X)$ we have
that $\|\Lip f\|_{\infty}=M<\infty$, in particular, $\Lip f(x_0)\leq
M$. By definition we have that
$$
\Lip f(x_0)=\inf_{r>0}\sup_{\substack{d(x_0,y)\leq r\\ y\neq
x_0}}\frac{|f(x_0)-f(y)|}{d(x_0,y)}.
$$
Fix $\varepsilon>0$. Then, there exists $r>0$ such that
$$
\frac{|f(x_0)-f(z)|}{d(x_0,z)}\leq\sup_{\substack{d(x_0,y)\leq r\\
y\neq x_0}}\frac{|f(x_0)-f(y)|}{d(x_0,y)}\leq
M+\varepsilon\qquad\forall z\in B(x_0,r),
$$
and so
$$
|f(x_0)-f(z)|\leq (M+\varepsilon)d(x_0,z)\qquad\forall z\in
B(x_0,r).
$$
Thus, if $d(x_0,z)\rightarrow 0$ then $|f(x_0)-f(z)|\rightarrow 0$,
and so
 $f$ is continuous at $x_0$.
\end{proof}

Now we look for conditions regarding the geometry of the metric
space $X$under which $\LIP(X)=D(X)$ (respectively
$\LIP^{\infty}(X)=D^{\infty}(X)$). As it can be expected, we need
some kind of \em connectedness\em. In fact, we are going to obtain a
positive answer in the class of \em length spaces \em or, more
generally, of \em quasi-convex spaces\em. Recall that the \em length
\em of a continuous curve $\gamma:[a,b]\rightarrow X$ in a metric
space $(X,d)$ is defined as
$$
\ell(\gamma)=\sup\Big\{\sum_{i=0}^{n-1}d(\gamma(t_i),\gamma(t_{i+1}))\Big\}
$$
where the supremum is taken over all partitions
$a=t_0<t_1<\cdots<t_n=b$ of the interval $[a,b]$. We will say that a
curve $\gamma$ is es \em rectifiable \em if $\ell(\gamma)<\infty$.
Now, $(X,d)$ is said to be a \em length space \em if for each pair
of points $x,y\in X$ the distance $d(x,y)$ coincides with the
infimum of all lengths of  curves in $X$ connecting $x$ with $y$.
Another interesting class of metric spaces, which contains length
spaces, are the so called \em quasi-convex \em spaces. Recall that a
metric space $(X,d)$ is \em quasi-convex \em if there exists a
constant $C>0$ such that for each pair of points $x,y\in X$, there
exists a curve $\gamma$ connecting $x$ and $y$ with
$\ell(\gamma)\leq Cd(x,y).$ As one can expect, a metric space is \em
quasi-convex \em if, and only if, it is bi-Lipschitz homeomorphic to
some length space.

We begin our analysis with a technical result.

\begin{lem}\label{help}
Let $(X,d)$ be a metric space and let $f\in D(X)$. Let $x,y\in X$
and suppose that there exists a rectifiable curve $\gamma:[a,b]\to
X$ connecting $x$ and $y$, that is, $\gamma(a)=x$ and $\gamma(b)=y$.
Then, $|f(x)-f(y)|\leq \|\Lip f\|_{\infty}\,\ell(\gamma).$
\end{lem}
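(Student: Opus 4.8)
The plan is to reduce the statement to a one‑dimensional mean value argument along the curve, in the spirit of Lemma $2.5$ of \cite{Sem}, using the arc–length function as an auxiliary parameter. Write $L=\|\Lip f\|_{\infty}$ and let $s:[a,b]\to[0,\ell(\gamma)]$ be the length function $s(t)=\ell(\gamma|_{[a,t]})$. I will use the standard facts that $s$ is nondecreasing and continuous (the length of a rectifiable curve depends continuously on the endpoint) and that $d(\gamma(t),\gamma(t'))\leq|s(t)-s(t')|$ for all $t,t'\in[a,b]$. Set $\psi=f\circ\gamma$; since $f$ is continuous by Lemma \ref{dcont} and $\gamma$ is continuous, $\psi$ is continuous on $[a,b]$, hence so will be the comparison function introduced below.

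The key estimate is a ``chain rule'' bound at each point: given $t_0\in[a,b)$ and $\varepsilon>0$, there is $\delta>0$ such that
$$|\psi(t)-\psi(t_0)|\leq (L+\varepsilon)\,\big(s(t)-s(t_0)\big)\qquad\text{whenever }t_0<t<t_0+\delta.$$
Indeed, as $t\to t_0^{+}$ we have $\gamma(t)\to\gamma(t_0)$; for those $t$ with $\gamma(t)\neq\gamma(t_0)$ the quotient $|f(\gamma(t))-f(\gamma(t_0))|/d(\gamma(t),\gamma(t_0))$ is one of the quotients defining $\Lip f(\gamma(t_0))$, so its upper limit is at most $\Lip f(\gamma(t_0))\leq L$, while for $\gamma(t)=\gamma(t_0)$ the left side vanishes. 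In either case, for $t$ close enough to $t_0$ from the right, $|\psi(t)-\psi(t_0)|\leq(L+\varepsilon)\,d(\gamma(t),\gamma(t_0))\leq(L+\varepsilon)(s(t)-s(t_0))$.

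Now fix $\varepsilon>0$ and consider the continuous function $\phi(t)=(L+\varepsilon)s(t)-\psi(t)+\psi(a)+\varepsilon$ on $[a,b]$. I claim $\phi\geq0$ on $[a,b]$. Otherwise the relatively open set $E=\{t\in[a,b]:\phi(t)<0\}$ is nonempty; since $\phi(a)=\varepsilon>0$ and $\phi$ is continuous, its infimum $t_0$ satisfies $t_0>a$, $\phi(t_0)=0$, $t_0<b$, and $E$ contains points arbitrarily close to $t_0$ from the right. But the key estimate gives $\delta>0$ with $\phi(t)=\phi(t_0)+(L+\varepsilon)(s(t)-s(t_0))-(\psi(t)-\psi(t_0))\geq 0$ for $t_0<t<t_0+\delta$, contradicting the presence of points of $E$ in that interval. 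Hence $\phi(b)\geq0$, i.e. $\psi(b)-\psi(a)\leq(L+\varepsilon)\ell(\gamma)+\varepsilon$; replacing $f$ by $-f$ gives the reverse inequality, so $|f(x)-f(y)|\leq(L+\varepsilon)\ell(\gamma)+\varepsilon$, and letting $\varepsilon\to0$ yields the lemma.

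I expect the only delicate points to be the chain‑rule estimate combined with the continuity of $s$: one must allow $\gamma$ to be non‑injective, so that $\gamma(t)=\gamma(t_0)$ may occur and must be treated separately, and one must know that $s$ is continuous so that $\phi$ is continuous and the infimum argument closes. Once these are in place, the rest is the routine ``the set where a strict inequality holds is empty'' argument.
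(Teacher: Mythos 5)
Your proof is correct, but it takes a genuinely different route from the paper's. The paper argues by compactness: for each $t$ it chooses a ball $B(\gamma(t),\rho_t)$ on which the pointwise $(M+\veps)$-estimate holds, pulls these back to an open cover of $[a,b]$, extracts a finite subcover, inserts auxiliary points $p_{i,i+1}$ in the overlaps of consecutive intervals, and telescopes the resulting chain of inequalities, bounding the sum of distances by $\ell(\gamma)$ directly from the definition of length as a supremum over partitions. You instead run a ``continuous induction'' along the curve: parametrize by the arc-length function $s$, prove the one-sided local estimate $|\psi(t)-\psi(t_0)|\le(L+\veps)(s(t)-s(t_0))$ at every $t_0$, and propagate it globally by the first-point-of-failure argument applied to $\phi$. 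Both are sound; your local estimate is correct, including the degenerate cases where $\gamma(t)=\gamma(t_0)$ (which also covers $\gamma(t_0)$ isolated in $X$, where $\Lip f(\gamma(t_0))=0$ by convention). What your route buys is a cleaner local-to-global mechanism --- it is essentially the standard proof that a bound on $\Lip f$ is an upper gradient, without the bookkeeping of the subcover and the auxiliary points. What it costs is the importation of two standard but not entirely trivial facts about rectifiable curves: additivity of length and continuity of $t\mapsto\ell(\gamma|_{[a,t]})$. (In fact continuity of $s$ is dispensable: at $t_0=\inf E$ one gets $\phi(t_0)\ge 0$ from monotonicity of $s$ together with continuity of $\psi$ alone, and relative openness of $E$ is only used to rule out $t_0\in E$, which the same observation already does.) The paper's covering argument, by contrast, needs nothing about the length function beyond its definition.
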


\begin{proof}
Since $f\in D(X)$, we have that $M=\|\Lip f\|_{\infty}<+\infty.$ Fix
$\veps>0$. For each $t\in[a,b]$ there exists $\rho_{t}>0$ such that
if $z\in B(\gamma(t),\rho_t)\setminus\{\gamma(t)\}$ then
$$
|f(\gamma(t))-f(z)|\leq (M+\veps)d(\gamma(t),z).
$$
Since $\gamma$ is continuous, there exists $\delta_t>0$ such that
$$
I_t=(t-\delta_t,t+\delta_t)\subset\gamma^{-1}(B(\gamma(t),\rho_t)).
$$
The family of intervals $\{I_t\}_{t\in[a,b]}$ is an open covering of
$[a,b]$ and by compactness it admits a finite subcovering which will
be denote by $\{I_{t_i}\}_{i=0}^{n+1}$. We may assume, refining the
subcovering if necessary, that an interval $I_{t_i}$ is not
contained in $I_{t_j}$ for $i\neq j$. If we relabel the indices of
the points $t_i$ in non-decreasing order, we can now choose a point
$p_{i,i+1}\in I_{t_i}\cap I_{t_{i+1}}\cap(t_i,t_{i+1})$  for each
$1\leq i\leq n-1$. Using the auxiliary points that we have just
chosen, we deduce that:
$$
d(x,\gamma(t_1))+\sum_{i=1}^{n-1}\Big[d(\gamma(t_i),\gamma(p_{i,i+1}))+d(\gamma(p_{i,i+1}),\gamma(t_{i+1}))\Big]+d(\gamma(t_n),y)\leq\ell(\gamma),
$$
and so $|f(x)-f(y)|\leq (M+\veps) \ell(\gamma)$. Finally, since this
is true for each $\veps>0$, we conclude that $|f(x)-f(y)|\leq \|\Lip
f\|_{\infty}\,\ell(\gamma)$, as wanted.
\end{proof}

As a straightforward consequence of the previous result, we deduce
\begin{cor}\label{lengthspace}
If $(X,d)$ is a quasi-convex space then $\LIP(X)=D(X)$.
\end{cor}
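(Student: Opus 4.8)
Since the inclusion $\LIP(X)\subseteq D(X)$ is already observed in the text (an $L$-Lipschitz function satisfies $\Lip f(x)\leq L$ everywhere), the only thing to prove is the reverse inclusion $D(X)\subseteq\LIP(X)$. So I would start by fixing $f\in D(X)$, so that $M=\|\Lip f\|_{\infty}<+\infty$, and aim to show $f$ is Lipschitz with constant at most $CM$, where $C$ is the quasi-convexity constant of $X$.

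\textbf{Main argument.} Let $x,y\in X$ be arbitrary. By quasi-convexity there is a curve $\gamma$ connecting $x$ and $y$ with $\ell(\gamma)\leq C\,d(x,y)$; in particular $\gamma$ is rectifiable. Apply Lemma \ref{help} to this curve: $|f(x)-f(y)|\leq\|\Lip f\|_{\infty}\,\ell(\gamma)\leq M\,C\,d(x,y)$. Since $x,y$ were arbitrary, $f$ is $CM$-Lipschitz, hence $f\in\LIP(X)$. This proves $D(X)\subseteq\LIP(X)$, and combined with the trivial inclusion we get $\LIP(X)=D(X)$.

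\textbf{On the difficulty.} There is essentially no obstacle here; the corollary is a genuine one-line consequence of the lemma, which is why the text flags it as ``a straightforward consequence.'' The only point worth a moment's care is making sure the curve supplied by quasi-convexity is indeed rectifiable so that Lemma \ref{help} applies — but this is immediate since $\ell(\gamma)\leq C\,d(x,y)<\infty$. One could also remark, though it is not needed for the statement as written, that the same argument restricted to bounded functions yields $\LIP^{\infty}(X)=D^{\infty}(X)$, and moreover that it gives the quantitative bound $\LIP(f)\leq C\,\|\Lip f\|_{\infty}$; in a length space ($C=1$) this reads $\LIP(f)=\|\Lip f\|_{\infty}$.
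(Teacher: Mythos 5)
Your proof is correct and is exactly the argument the paper intends: the corollary is stated as a direct consequence of Lemma \ref{help}, obtained by applying that lemma to the curve with $\ell(\gamma)\leq C\,d(x,y)$ supplied by quasi-convexity. Nothing is missing.
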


The proof of the previous result is based on the existence of curves
connecting each pair of points in $X$ and whose length can be
estimated in terms of the distance between the points. A reasonable
kind of spaces in which we can approach the problem of determining
if $\LIP(X)$ and $D(X)$ coincide, are the so called \em chainable
spaces\em. It is an interesting class of metric spaces containing
length spaces and quasi-convex spaces. Recall that a metric space
$(X,d)$ is said to be \em well-chained \em or \em chainable \em if
for every pair of points $x,y\in X$ and for every $\varepsilon>0$
there exists an $\varepsilon-$chain joining $x$ and $y$, that is, a
finite sequence of points $z_1=x,z_2,\ldots,z_\ell=y$ such that
$d(z_i,z_{i+1})<\varepsilon$, for $i=1,2,\ldots,\ell-1$. In such
spaces there exist ``chains'' of points which connect two given
points, and for which the distance between the \em nodes\em, which
are the points $z_1,z_2,\ldots,z_\ell$, is arbitrary small. However,
throughout some examples we will see that there exists chainable
spaces for which the spaces of functions $\LIP(X)$ and $D(X)$ do not
coincide (see Example \ref{ejem}). Nevertheless, if we work with a
metric space $X$ in which we can control the number of nodes in the
chain between two points in terms of the distance between that
points, then we will obtain a positive answer to our problem. A
chainable space for which there exists a constant $K$ (which only
depends on $X$) such that for every $\varepsilon>0$ and for every
$x,y\in X$ there exists an $\varepsilon-$chain
$z_1=x,z_2,\ldots,z_\ell=y$ such that
$$
(\ell-1)\varepsilon\leq K(d(x,y)+\varepsilon)
$$
is called a \em quasi-length space\em. In  Lemma 2.5. \cite{Sem},
Semmes gave a characterization of quasi-length spaces in terms of a
condition which reminds a kind of ``mean value theorem''.
\begin{lem} \label{casilon}
A metric space $(X,d)$ is a quasi-length space if and only if there
exists a constant $K$ such that for each $\varepsilon>0$ and each
function $f:X\longrightarrow\R$ we have that
$$
|f(x)-f(y)|\leq K(d(x,y)+\varepsilon)\sup_{z\in
X}D_{\varepsilon}f(z)
$$
for each $x,y\in X$, where
$$
D_{\varepsilon}f(z)=\frac{1}{\varepsilon}\sup\Big\{|f(y)-f(z)|:y\in
X, d(z,y)\leq \varepsilon\Big \}.
$$
\end{lem}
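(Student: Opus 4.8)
The plan is to prove both implications directly from the definitions, translating the combinatorial content of an $\varepsilon$-chain into an estimate on $|f(x)-f(y)|$ and conversely.

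For the forward implication, suppose $(X,d)$ is a quasi-length space with constant $K$. Fix $\varepsilon>0$, a function $f:X\to\R$, and points $x,y\in X$. By hypothesis there is an $\varepsilon$-chain $z_1=x,z_2,\dots,z_\ell=y$ with $(\ell-1)\varepsilon\leq K(d(x,y)+\varepsilon)$. Since $d(z_i,z_{i+1})<\varepsilon$, the very definition of $D_\varepsilon f$ gives $|f(z_i)-f(z_{i+1})|\leq \varepsilon\, D_\varepsilon f(z_i)\leq \varepsilon\sup_{z\in X}D_\varepsilon f(z)$ for each $i$. Summing over $i=1,\dots,\ell-1$ and applying the triangle inequality for the absolute value yields
$$
|f(x)-f(y)|\leq \sum_{i=1}^{\ell-1}|f(z_i)-f(z_{i+1})|\leq (\ell-1)\varepsilon\sup_{z\in X}D_\varepsilon f(z)\leq K(d(x,y)+\varepsilon)\sup_{z\in X}D_\varepsilon f(z),
$$
which is exactly the asserted inequality, with the same constant $K$.

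For the converse, assume the mean-value-type inequality holds with constant $K$. Fix $\varepsilon>0$ and $x,y\in X$; we must produce an $\varepsilon'$-chain (for a slightly adjusted parameter, or after absorbing constants) whose number of nodes is controlled. The natural choice of test function is the distance to a fixed point along a putative chain; more precisely, one applies the hypothesis to functions built from $d$. The cleanest route: consider, for the pair $x,y$, the function $f(z)=d_\varepsilon(x,z)$ defined as the infimal ``$\varepsilon$-step cost'' of chaining $x$ to $z$ (the number of $\varepsilon$-steps needed, times $\varepsilon$), set to $+\infty$ if no such chain exists — or, to stay with real-valued $f$, truncate appropriately and work on the $\varepsilon$-connected component of $x$. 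For such an $f$ one checks $D_\varepsilon f(z)\leq 1$ for every $z$, since moving a distance $<\varepsilon$ changes the chain-cost by at most one step. The hypothesis then gives $f(y)=d_\varepsilon(x,y)\leq K(d(x,y)+\varepsilon)$, which, unwinding the definition of $d_\varepsilon$, says precisely that $x$ can be joined to $y$ by an $\varepsilon$-chain with $(\ell-1)\varepsilon\leq K(d(x,y)+\varepsilon)$; hence $X$ is a quasi-length space. One must separately note that $X$ is chainable: if some pair $x,y$ admitted no $\varepsilon$-chain, the $\{0,1\}$-valued indicator of the $\varepsilon$-component of $x$ would have $D_\varepsilon f\equiv 0$ yet $|f(x)-f(y)|=1$, contradicting the inequality.

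The main obstacle is the converse direction, specifically making the ``$\varepsilon$-chain distance'' $d_\varepsilon(x,\cdot)$ into a legitimate real-valued function to which the hypothesis applies, while simultaneously verifying $\sup_z D_\varepsilon f(z)\leq 1$ and not losing track of constants; the chainability issue must be disposed of first so that $d_\varepsilon$ is finite on the relevant set. Once $f$ is correctly set up, both the bound on $D_\varepsilon f$ and the extraction of the chain from the resulting inequality are immediate from the definitions. I would present this essentially as in Semmes, Lemma 2.5 of \cite{Sem}, to which we may refer for the details.
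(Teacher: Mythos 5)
The paper itself gives no proof of this lemma: it is quoted verbatim as Lemma 2.5 of Semmes's book \cite{Sem}, so there is no in-paper argument to compare against. Your plan is the standard (Semmes) proof, and the forward implication as you write it is complete and correct: the telescoping sum over the nodes of the $\varepsilon$-chain, the bound $|f(z_i)-f(z_{i+1})|\leq\varepsilon D_\varepsilon f(z_i)$, and the node-count hypothesis $(\ell-1)\varepsilon\leq K(d(x,y)+\varepsilon)$ combine exactly as claimed, with the same constant $K$.

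In the converse, the idea (apply the inequality to the $\varepsilon$-chain distance $d_\varepsilon(x,\cdot)$ and to the indicator of the $\varepsilon$-component) is the right one, but there is a genuine detail you wave at without resolving: the definition of $D_\varepsilon f(z)$ takes the supremum over $y$ with $d(z,y)\leq\varepsilon$ (non-strict), whereas an $\varepsilon$-chain requires $d(z_i,z_{i+1})<\varepsilon$ (strict). With the strict chain definition, a point $w$ at distance exactly $\varepsilon$ from $z$ cannot be appended to a chain ending at $z$, so neither $\sup_z D_\varepsilon(\chi_A)(z)=0$ for the $\varepsilon$-component $A$ of $x$, nor $\sup_z D_\varepsilon(d_\varepsilon(x,\cdot))(z)\leq 1$, follows immediately; both claims as stated can fail at such boundary pairs. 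The standard fix is to run the argument with chains whose steps satisfy $d(z_i,z_{i+1})\leq\varepsilon$ (for which both bounds do hold, by the argument you give), and then observe that such a chain is an $\varepsilon'$-chain for every $\varepsilon'>\varepsilon$, absorbing the discrepancy into the constant or passing to the limit; alternatively one works with $d_{\varepsilon'}(x,\cdot)$ for $\varepsilon'$ slightly larger than $\varepsilon$ as you suggest, at the cost of tracking the ratio $\varepsilon'/\varepsilon$ in the constant. This is routine but it is precisely the step your plan leaves to ``absorbing constants,'' and it should be written out (or the lemma simply cited from \cite{Sem}, as the paper does).
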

The previous characterization allows us to give a positive answer to
our problem for quasi-length spaces. More precisely, we have the
following:

\begin{cor}\label{main2}
Let $(X,d)$ be a quasi-length space. Then, $ \LIP(X)=D(X).$
\end{cor}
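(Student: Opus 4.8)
The plan is to deduce the equality $\LIP(X)=D(X)$ directly from Semmes' characterization in Lemma \ref{casilon}, using the fact that the infinitesimal Lipschitz constant $\Lip f(x)$ is by definition $\limsup_{r\to 0}D_r f(x)=\inf_{r>0}D_r f(x)$. One inclusion is immediate: an $L$-Lipschitz function always satisfies $\Lip f(x)\le L$ for every $x$, so $\LIP(X)\subset D(X)$ regardless of any hypothesis on $X$. So the content is the reverse inclusion $D(X)\subset\LIP(X)$.

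First I would fix $f\in D(X)$ and set $M=\|\Lip f\|_{\infty}<+\infty$. The key observation is that one can bound $\sup_{z\in X}D_{\varepsilon}f(z)$ in terms of $M$ for small $\varepsilon$. Indeed, by Lemma \ref{dcont} we know $f$ is continuous, and more importantly, for each $z$ and each $\varepsilon>0$ we have $D_{\varepsilon}f(z)\ge \Lip f(z)$ is not quite what we want — rather we want an upper bound. Here is the point: since $\Lip f(z)=\inf_{r>0}D_r f(z)$, for a \emph{fixed} $z$ we can make $D_r f(z)$ close to $\Lip f(z)\le M$ by taking $r$ small, but the required $r$ depends on $z$, so $\sup_{z}D_{\varepsilon}f(z)$ need not be close to $M$ for a uniform $\varepsilon$. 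This is the main obstacle, and it is resolved exactly by the argument already carried out in Lemma \ref{help}-type covering reasoning, or more cleanly: apply Lemma \ref{casilon} and let $\varepsilon\to 0$, controlling $\limsup_{\varepsilon\to 0}\sup_{z}D_\varepsilon f(z)$.

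Concretely, fix $x,y\in X$. By Lemma \ref{casilon} there is a constant $K$ (depending only on $X$) with
$$
|f(x)-f(y)|\le K(d(x,y)+\varepsilon)\sup_{z\in X}D_{\varepsilon}f(z)
$$
for every $\varepsilon>0$. Now I claim $\limsup_{\varepsilon\to 0}\sup_{z\in X}D_{\varepsilon}f(z)\le M$. Grant the claim for a moment: given $\eta>0$, for all sufficiently small $\varepsilon$ we have $\sup_{z}D_\varepsilon f(z)\le M+\eta$, hence $|f(x)-f(y)|\le K(d(x,y)+\varepsilon)(M+\eta)$; letting $\varepsilon\to 0$ gives $|f(x)-f(y)|\le K(M+\eta)d(x,y)$, and then $\eta\to 0$ yields $|f(x)-f(y)|\le KM\,d(x,y)$. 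Since $x,y$ were arbitrary, $f$ is $KM$-Lipschitz, so $f\in\LIP(X)$, which completes the reverse inclusion. (Note the Lipschitz constant we obtain is $K\|\Lip f\|_\infty$, not $\|\Lip f\|_\infty$, which is the expected loss from quasi-length geometry.)

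It remains to justify the claim $\limsup_{\varepsilon\to 0}\sup_{z}D_\varepsilon f(z)\le M$. This is where a compactness/covering argument of the same flavour as in the proof of Lemma \ref{help} is needed, and I expect it to be the technically delicate step if $X$ is not compact. In fact one does not strictly need the claim in that strong uniform form: an alternative, and probably the route the authors take, is to observe that Lemma \ref{casilon} can be applied after first reducing to the scale at which $D_\varepsilon f(z)$ is controlled, or simply to invoke that for the mean value inequality it suffices that $\sup_z D_\varepsilon f(z)$ stays bounded (by, say, something like $\|\Lip f\|_\infty$ up to the continuity estimate in Lemma \ref{dcont}) as $\varepsilon\to 0$. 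Either way, the corollary follows by combining the Semmes inequality of Lemma \ref{casilon} with the definition of $\Lip f$ as an infimum over radii and a passage to the limit $\varepsilon\to 0$.
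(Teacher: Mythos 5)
Your plan is the same as the paper's: apply Lemma \ref{casilon} and let $\varepsilon\to 0$. The gap is exactly the one you flag and then leave open, namely the claim that $\limsup_{\varepsilon\to 0}\sup_{z\in X}D_{\varepsilon}f(z)\leq \|\Lip f\|_{\infty}$ (or is at least finite). No covering or compactness argument will supply this, because the claim is false in the stated generality --- indeed it fails for a quasi-length space in the sense of the paper's own definition. Take $X=\Q\cap[0,1]$ with the Euclidean metric: for $x,y\in X$ and $\varepsilon>0$, an equally spaced chain of rationals with step at most $\varepsilon/2$ gives $(\ell-1)\varepsilon\leq 2(d(x,y)+\varepsilon)$, so $X$ is a quasi-length space with $K=2$. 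Fix an irrational $\alpha\in(0,1)$ and let $f$ be the characteristic function of $\{q\in X:\,q>\alpha\}$. Every point of $X$ has a neighbourhood on which $f$ is constant, so $\Lip f\equiv 0$ and $f\in D(X)$ with $\|\Lip f\|_{\infty}=0$; yet $f$ is not Lipschitz (not even uniformly continuous), and $\sup_{z\in X}D_{\varepsilon}f(z)=1/\varepsilon\to\infty$. So the $\sup_{z}$ and the $\limsup_{\varepsilon\to 0}$ cannot be interchanged, and no repair of this step is possible: the equality $\LIP(X)=D(X)$ itself fails for this quasi-length space.

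For what it is worth, the paper's own proof performs precisely this interchange (``if we take the superior limit when $\varepsilon$ tends to zero we deduce\dots'') with no justification, so your proposal is faithful to the intended argument and is more candid about where it breaks. The conclusion is sound for quasi-convex spaces (Corollary \ref{lengthspace}, via Lemma \ref{help}, where compactness of the parameter interval of an actual rectifiable curve supplies the uniformity in $z$ that is missing here), and it would also go through under the genuinely stronger hypothesis that $\sup_{z\in X}D_{\varepsilon}f(z)$ remains bounded as $\varepsilon\to 0$; but for quasi-length spaces as defined, some additional assumption (completeness, local compactness, or actual connectedness) is required, and your alternative suggestion of ``first reducing to the scale at which $D_{\varepsilon}f(z)$ is controlled'' runs into the same non-uniformity in $z$.
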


\begin{proof}
We have to check that $D(X)\subset\LIP(X)$. Let $f\in D(X)$ and
denote by $M=\|\Lip f\|_{\infty}<+\infty$. Since $X$ is a
quasi-length space we obtain, aplying Lemma \ref{casilon}, that
there exists a constant $K\geq 1$ such that
$$
|f(x)-f(y)|\leq K(d(x,y)+\varepsilon)\sup_{z\in
X}D_{\varepsilon}f(z)
$$
for each $x,y\in X$ and each $\varepsilon>0$. Thus, if we take the superior limit when $\varepsilon$
tends to zero we deduce:
$$
|f(x)-f(y)|\leq K\, d(x,y)\sup_{z\in X}\Lip f(z)= K\,M\,d(x,y)
$$
for each $x,y\in X$. Thus, $f$ is a $KM-$Lipschitz function and we
are done.
\end{proof}

We will see in \ref{converse} that the converse of Corollary
\ref{main2} is true under more restrictive hypothesis.

Next, let us see that there exist metric spaces for which
$\LIP(X)\subsetneq D(X)$. We will approach this by constructing two
metric spaces for which $\LIP^{\infty}(X)\neq D^{\infty}(X)$. In the
first example we see that the equality fails ``for large distances''
while in the second one it fails ``for infinitesimal distances''.
\begin{example}\em\label{ejem}
Define $X=[0,\infty)=\bigcup_{n\geq 1}[n-1,n],$ and write
$I_n=[n-1,n]$ for each $n\geq 1$. Consider the sequence of functions
$f_n:[0,1]\rightarrow\R$ given by
$$
f_n (x)=\left\{\begin {array}{ll} x&\text{if}\,\,x\in\big[0,\frac{1}{n}\big]\\[10pt]
\frac{nx+n-1}{n^2}&\text{if}\,\,x\in\big[\frac{1}{n},1\big].
\end{array}
\right.
$$
For each pair of points $x,y\in I_n$, we write $
d_n(x,y)=f_n(|x-y|), $ and we define a metric on $X$ as follows.
Given a pair of points $x,y\in X$ with $x<y$, $x\in I_n$, $y\in I_m$
we define
$$
d(x,y)=\left\{\begin {array}{ll} d_n(x,y)\hspace{-5.4cm}&\text{if}\,\,n=m\\[10pt]
d_n(x,n)+\sum_{i=n+1}^{m-1}d_i(i-1,i)+d_m(m-1,y)&\text{if}\,\,n< m
\end{array}
\right.
$$
A straightforward computation shows that $d$ is in fact a metric and
it coincides locally with the Euclidean metric $d_e$. More
precisely, $$\text{ if } x\in I_n, \text{ on }
J^x=\big(x-\frac{1}{n+1}, x+\frac{1}{n+1}\big)\text{ we have that }
d|_{J^x}=d_e|_{J^x}.
$$
Next, consider the bounded function $g:X\rightarrow\R$ given by
$$
g(x)=\left\{
\begin{array}{ll}
{2k}-x&\text{if $x\in I_{2k}$,}\\[4pt]
x-{2k}&\text{if $x\in I_{2k+1}$.}
\end{array}
\right.
$$
Let us check that $g\in D^{\infty}(X)\setminus\LIP^{\infty}(X)$.
Indeed, let $x\in X$ and assume that there exists $n\geq 1$ such
that $x\in I_n$. Then, we have that if $y\in J^x$,
$$
\Lip f(x)=\limsup_{\substack{y\to x\\ y\neq x}}
\frac{|g(x)-g(y)|}{d(x,y)}=\limsup_{\substack{y\to x\\ y\neq
x}}\frac{|x-y|}{|x-y|}=1.
$$
Therefore, $g\in D^{\infty}(X)$.

On the other hand, for each positive integer $n$ we have
$|g(n-1)-g(n)|=1$ and $d(n-1,n)=f_n(1)=\frac{2n-1}{n^2}$. Thus, we
obtain that
$$
\lim_{n\rightarrow\infty}\frac{|g(n-1)-g(n)|}{d(n-1,n)}=\lim_{n\rightarrow\infty}\frac{1}{\frac{2n-1}{n^2}}=\infty
$$
and so $g$ is not a Lipschitz function.

 In particular, since
$\LIP(X)\neq D(X)$, we deduce by Corollary \ref{main2} that $X$ is
not a quasi-convex space. However, it can be checked that $X$ is a
chainable space.\fin

\begin{example}\em\label{cuspide}
Consider the set
$$
X=\{(x,y)\in\R^2:y^3=x^2, -1\leq x\leq 1\}=\{(t^3,t^2),-1\leq t\leq
1\},
$$
and let $d$ be the restriction to $X$ of the Euclidean metric of
$\R^2$. We define the bounded function
$$
g:X\rightarrow\R,\,\,(x,y)\mapsto g(x,y)=\left\{
\begin{array}{rl}
y&\text{if $x\geq 0$,}\\[4pt]
-y&\text{if $x\leq 0$.}
\end{array}
\right.
$$
Let us see that $g\in D^{\infty}(X)\backslash\LIP^{\infty}(X)$.

Indeed, if $t\neq 0$, it can be checked that $\Lip g(t^3,t^2)\leq
1$. On the other hand, at the origin we have
$$
\Lip g(0,0)=\limsup_{(x,y)\to (0,0)}
\frac{|g(x,y)-g(0,0)|}{d((x,y),(0,0))}=\limsup_{t\to
0}\frac{t^2}{\sqrt{(t^3)^2+(t^2)^2}}=1.
$$
Thus, we obtain that $\|\Lip f\|_{\infty}=1$ and so $g\in
D^{\infty}(X)$. Take now two symmetric points from the cusp with
respect to the $y-$axis, that is, $A_t=(t^3,t^2)$ and
$B_t=(-t^3,t^2)$ for $0<t<1$. In this case, we get $d(A_t,B_t)=2t^3$
and $|f(A_t)-f(B_t)|=t^2-(-t^2)=2t^2$. If $t$ tends to $0$, we have
$$
\lim_{t\to 0^{+}}\frac{|f(A_t)-f(B_t)|}{d(A_t,B_t)}=\lim_{t\to
0^+}\frac{2t^2}{2t^3}=\lim_{t\to 0^+}\frac{1}{t}= +\infty.
$$
Thus, $g$ is not a Lipschitz function.  \fin
\end{example}

\noindent In general, if $X$ is non compact space we have that
$$
\begin{array}{ccccccc}
&&&&\LIP_{\text{loc}}(X)\\[-9pt]&&&\rotup&&\rotdowna\\\LIP(X)&\subsetneq&\LIP_{\text{loc}}(X)\cap
D(X)\ &&& &\ \Cont(X)\\&&&\rotdown&&\rotupa\\[-9pt] &&&&D(X)
\end{array}
$$where $\LIP_{\text{loc}}(X)$ denotes the space of locally Lipschitz
functions. Recall that in \ref{ejem} we have constructed a function
$f\in \LIP_{\text{loc}}(X)\cap D(X)\setminus\LIP(X)$. In addition,
there is no inclusion relation between $\LIP_{\text{loc}}(X)$ and $
D(X)$. Indeed, consider for instance the metric space
$X=\bigcup_{i=1}^{\infty}B_i\subset\R$ with the Euclidean distance
where $B_i=B(i,1/3)$ denotes the open ball centered at $(i,0)$ and
radius $1/3$. One can check that the function $f(x)=ix$ if $x\in
B_i$ is locally Lipschitz whereas $f\notin D(X)$ because $\|\Lip
f\|_{\infty}=\infty$. On the other hand, the function $g$ in Example
\ref{cuspide} belongs to $D(X)\backslash\LIP_{\text{loc}}(X)$.

\section{A Banach space structure for infinitesimally Lipschitz functions}
In this section we search for sufficient conditions to have a
converse for Corollary \ref{main2}. We begin introducing a kind of
metric spaces which will play a central role throughout this
section. In addition, for such spaces, we will endow the space of
functions $D^{\infty}(X)$ and $D(X)$ with a Banach structure.

\begin{defn}\em\label{locradqua}
Let $(X,d)$ be a metric space. We say that $X$ is \em locally
radially quasi-convex \em if for each $x\in X$, there exists a
neighborhood $U^x$ and a constant $K_x>0$ such that for each $y\in
U^x$ there exists a rectifiable curve $\alpha$ in $U^x$ connecting
$x$ and $y$ such that $\ell(\gamma)\leq K_xd(x,y)$.
\end{defn}

Note that the spaces introduced in the Examples \ref{ejem} and
\ref{cuspide} are locally radially quasi-convex. Observe that there
exist locally radially quasi-convex spaces which are not locally
quasi-convex. Indeed, let
$X=\bigcup_{n=1}^{\infty}\big\{(x,\frac{x}{n}):x\in\R\big\}$ and $d$
be the restriction to $X$ of the Euclidean metric of $\R^2$. It can
be checked that $(X,d)$ is locally radially quasi-convex but it is
not locally quasi-convex.

Next, we endow the space $D^{\infty}(X)$ with the following norm:
$$
\|f\|_{D^{\infty}}=\max\{\|f\|_{\infty},\|\Lip f\|_{\infty}\}
$$
for each $f\in D^{\infty}(X)$.

\begin{thm}\label{sol1}
Let $(X,d)$ be a locally radially quasi-convex metric space. Then,
$(D^{\infty}(X),\|\cdot\|_{D^{\infty}})$ is a Banach space.
\end{thm}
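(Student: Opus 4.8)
The plan is to verify that $\|\cdot\|_{D^\infty}$ is a norm (it is the maximum of the sup-norm, which is a norm, and the seminorm $\|\Lip(\cdot)\|_\infty$, so the only nontrivial point — definiteness — follows from the $\|\cdot\|_\infty$ term), and then to show completeness by taking a Cauchy sequence $(f_n)$ in $D^\infty(X)$ and producing a limit in $D^\infty(X)$. Since $\|f_n-f_m\|_\infty \le \|f_n-f_m\|_{D^\infty}$, the sequence is uniformly Cauchy, hence converges uniformly to a bounded function $f$; by Lemma \ref{dcont} (or directly), $f$ is continuous and $\|f\|_\infty < \infty$. The whole content of the proof is therefore to show that $f \in D(X)$, i.e. that $\Lip f$ is bounded, with the right bound, and that $\|\Lip(f_n - f)\|_\infty \to 0$.

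The key idea is to pass the uniform bound on $\Lip(f_n - f_m)$ to a genuine Lipschitz-type estimate on small radial curves, using the hypothesis that $X$ is locally radially quasi-convex, and this is where Lemma \ref{help} enters. Fix $\varepsilon>0$ and choose $N$ so that $\|\Lip(f_n-f_m)\|_\infty \le \varepsilon$ for $n,m\ge N$. Fix a point $x\in X$ with neighbourhood $U^x$ and constant $K_x$ as in Definition \ref{locradqua}. For any $y\in U^x$ there is a rectifiable curve $\alpha$ in $U^x$ from $x$ to $y$ with $\ell(\alpha)\le K_x d(x,y)$; applying Lemma \ref{help} to the function $f_n-f_m \in D(X)$ along this curve gives
$$
|(f_n-f_m)(x) - (f_n-f_m)(y)| \le \|\Lip(f_n-f_m)\|_\infty\,\ell(\alpha) \le \varepsilon K_x\, d(x,y)
$$
for all $n,m\ge N$. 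Letting $m\to\infty$ and using the uniform convergence $f_m\to f$, we obtain $|(f_n-f)(x)-(f_n-f)(y)| \le \varepsilon K_x\, d(x,y)$ for every $y\in U^x$ and every $n\ge N$. This shows first that $f_n-f$ is locally Lipschitz near $x$, hence $f = f_n - (f_n-f) \in D(X)$ locally, and in fact, dividing by $d(x,y)$ and letting $y\to x$, that $\Lip(f_n-f)(x) \le \varepsilon K_x$ for all $n\ge N$.

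The main obstacle — and the reason one must be a little careful — is that the constant $K_x$ depends on $x$, so the pointwise estimate $\Lip(f_n-f)(x)\le \varepsilon K_x$ does not immediately give $\|\Lip(f_n-f)\|_\infty \le \varepsilon$ uniformly in $x$; one needs the estimate with a constant independent of $x$. To get this I would instead argue as follows: from $\Lip f_n(x) \le M := \sup_k \|f_k\|_{D^\infty} < \infty$ (the Cauchy sequence is bounded) together with the local estimates above, $f$ belongs to $D(X)$ with $\|\Lip f\|_\infty \le M$, so $f\in D^\infty(X)$; and for the convergence, observe that the inequality $\Lip(f_n - f_m)(z) \le \varepsilon$ holds at \emph{every} point $z$ for $n,m\ge N$ — this is just the definition of the Cauchy condition in the seminorm — and since $f_m\to f$ uniformly, one checks directly from the definition of $\Lip$ that $\Lip(f_n-f)(z) = \limsup_{y\to z}\frac{|(f_n-f)(z)-(f_n-f)(y)|}{d(z,y)} \le \limsup_m \Lip(f_n-f_m)(z) \le \varepsilon$ for all $z$, i.e. $\|\Lip(f_n-f)\|_\infty \le \varepsilon$. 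Combined with $\|f_n-f\|_\infty \to 0$, this gives $\|f_n - f\|_{D^\infty}\to 0$, completing the proof; the role of the locally radially quasi-convex hypothesis is precisely to guarantee, via Lemma \ref{help}, that the limit function $f$ is continuous/locally Lipschitz so that $\Lip f$ is a well-defined bounded Borel function rather than something degenerate, and that the $\limsup$ manipulation above is legitimate.
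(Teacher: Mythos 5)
Your argument up to the pointwise estimates is sound and is essentially the paper's: Lemma \ref{help} applied to $f_n-f_m$ along the radial curves of Definition \ref{locradqua} gives $|(f_n-f_m)(x)-(f_n-f_m)(y)|\le \|\Lip(f_n-f_m)\|_{\infty}K_x\,d(x,y)$ for $y\in U^x$, hence (letting $m\to\infty$) $\Lip(f_n-f)(x)\le\varepsilon K_x$ for $n\ge N(\varepsilon)$, and since $K_x$ is finite for each \emph{fixed} $x$ this does yield $\Lip f(x)\le M$ and $f\in D^{\infty}(X)$. You also correctly identify the remaining obstacle (non-uniformity of $K_x$). But your proposed resolution contains a genuine gap: the inequality
$$
\Lip(f_n-f)(z)\ \le\ \limsup_{m\to\infty}\Lip(f_n-f_m)(z)
$$
is not a ``direct check from the definition''. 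It requires interchanging the $\limsup_{y\to z}$ in the definition of $\Lip$ with the limit in $m$, and the pointwise functional $\Lip(\cdot)(z)$ is \emph{not} upper semicontinuous under uniform convergence: on $\R$, $g_m(t)=\max\{|t|-1/m,\,0\}$ converges uniformly to $g(t)=|t|$ with $\Lip g_m(0)=0$ for every $m$ but $\Lip g(0)=1$. Worse, your step uses nothing but uniform convergence and the Cauchy condition in the seminorm, so if it were valid it would prove completeness of $D^{\infty}(X)$ for an \emph{arbitrary} metric space; Example \ref{nobanach} shows this is false, and in that very example $\Lip f_n(0,0)=0$ for all $n$ while $\Lip f(0,0)=+\infty$ --- exactly a failure of the inequality you assert. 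The closing claim that local radial quasi-convexity ``makes the $\limsup$ manipulation legitimate'' is not substantiated and cannot be, since that manipulation never invokes the hypothesis.

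The repair is cheap and uses only what you have already proved. Fix $\varepsilon>0$ and $N$ with $\|\Lip(f_n-f_m)\|_{\infty}\le\varepsilon$ for $n,m\ge N$. Fix $x\in X$ and let $\varepsilon'>0$ be arbitrary; your own radial estimate gives $N'(\varepsilon')$ with $\Lip(f_m-f)(x)\le\varepsilon' K_x$ for $m\ge N'(\varepsilon')$. Choosing one $m\ge\max\{N,N'(\varepsilon')\}$ and using pointwise subadditivity of $\Lip(\cdot)(x)$,
$$
\Lip(f_n-f)(x)\ \le\ \Lip(f_n-f_m)(x)+\Lip(f_m-f)(x)\ \le\ \varepsilon+\varepsilon' K_x ,
$$
and since $x$ (hence $K_x$) is fixed while $\varepsilon'>0$ is arbitrary, $\Lip(f_n-f)(x)\le\varepsilon$; this bound is uniform in $x$ because $N$ is. The same device gives $\|\Lip f\|_{\infty}\le M$ cleanly. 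This is where the geometric hypothesis is genuinely consumed: it produces the non-uniform pointwise estimate $\Lip(f_m-f)(x)\le\varepsilon'K_x$, which the seminorm triangle inequality at the fixed point then converts into the uniform one. (The paper's own write-up, which lets $n_1$ depend on $K_x$, benefits from the same clarification.)
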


\begin{proof}
Let $\{f_n\}_n$ be a Cauchy sequence in
$(D^{\infty}(X),\|\cdot\|_{D^{\infty}})$. Since $\{f_n\}_n$ is
uniformly Cauchy, there exists $f\in\Cont(X)$ such that $f_n\to f$
with the norm $\|\cdot\|_\infty$. Let us see that $f\in D(X)$ and
that $\{f_n\}_n$ converges to $f$ with respect to the seminorm
$\|\Lip (\cdot)\|_{\infty}$.

Indeed, let $x\in X$. Since $(X,d)$ is locally radially
quasi-convex, there exist a neighborhood $U^x$ and a constant
$K_x>0$ such that for each $y\in U^x$ there exists a rectifiable
curve $\gamma$ which connects $x$ and $y$ such that
$\ell(\gamma)\leq K_xd(x,y)$. By Lemma \ref{help}, we find that for
each $y\in U^x$ and for each $n,m\geq 1$
$$
|f_n(x)-f_m(x)-(f_n(y)-f_m(y))|\leq
\|\Lip(f_n-f_m)\|_{\infty}K_xd(x,y).
$$

Let $r>0$ be such that $B(x,r)\subset U_x$ and let $y\in B(x,r)$. We
have that
\begin{equation*}
\begin{split}
\Big|\frac{f_n(x)-f_m(x)}{r}-\frac{f_n(y)-f_m(y)}{r}\Big|&\leq
\|\Lip (f_n-f_m)\|_{\infty}K_x\frac{d(x,y)}{r}\\&\leq \|\Lip
(f_n-f_m)\|_{\infty}K_x.
\end{split}
\end{equation*}

Let $\veps>0$. Since $\{f_n\}_n$ is a Cauchy sequence with respect
to the seminorm $\|\Lip (\cdot)\|_{\infty}$, there exists $n_1\geq
1$ such that if $n,m\geq n_1$, then
$$
\|\Lip (f_n-f_m)\|_{\infty}<\frac{\veps}{4K_x}.
$$

Thus, for each $r>0$  such that $B(x,r)\subset U_x$ and for each
$n,m\geq n_1$, we have the following chain of inequalities
\begin{multline*}
\Big|\frac{|f_n(x)-f_n(y)|}{r}-\frac{|f_m(x)-f_m(y)|}{r}\Big|\\\leq\Big|\frac{f_n(x)-f_m(x)}{r}-\frac{f_n(y)-f_m(y)}{r}\Big|\leq
\|\Lip (f_n-f_m)\|_{\infty}K_x<\frac{\veps}{4}
\end{multline*}
for each $y\in B(x,r)$.

In particular, for each $n\geq n_1$, we obtain that
\begin{multline*}
\frac{|f_n(x)-f_n(y)|}{r}\leq
\Big|\frac{|f_n(x)-f_n(y)|}{r}-\frac{|f_{n_1}(x)-f_{n_1}(y)|}{r}\Big|\\+\frac{|f_{n_1}(x)-f_{n_1}(y)|}{r}
<\frac{|f_{n_1}(x)-f_{n_1}(y)|}{r}+\frac{\veps}{4}.
\end{multline*}
Thus, the previous inequality implies, upon taking the supremum over
$B(x,r)$, that
$$
\sup_{y\in
B(x,r)}\Big\{\frac{|f_n(x)-f_n(y)|}{r}\Big\}\leq\sup_{y\in
B(x,r)}\Big\{\frac{|f_{n_1}(x)-f_{n_1}(y)|}{r}\Big\}+\frac{\veps}{4}
$$
for each $r>0$ such that $B(x,r)\subset U_x$.

On the other hand, for $\Lip(f_{n_1})(x)$, there exists $r_0>0$,
such that if $0<r<r_0$, then $B(x,r)\subset U_x$ and
$$
\sup_{y\in
B(x,r)}\Big\{\frac{|f_{n_1}(x)-f_{n_1}(y)|}{r}\Big\}\leq\Lip(f_{n_1})(x)+\frac{\veps}{4}.
$$
Hence, for each $n\geq n_1$ and each $0<r<r_0$, we obtain that
$$
\sup_{y\in
B(x,r)}\Big\{\frac{|f_n(x)-f_n(y)|}{r}\Big\}\leq\Lip(f_{n_1})(x)+\frac{2\veps}{4}.
$$

Since $f_n$ is a Cauchy sequence with respect to the seminorm
$\|\Lip (\cdot)\|_{\infty}$, then the sequence of real numbers
$\|\Lip (f_n)\|_{\infty}$ is a Cauchy sequence too and so there
exists $M>0$ such that $\|\Lip (f_n)\|_{\infty}<M$ for each
$n\geq1$. In particular, for each $n\geq n_1$ and $0<r<r_0$, we
obtain the following:
$$
\sup_{x\in
B(x,r)}\Big\{\frac{|f_n(x)-f_n(y)|}{r}\Big\}<\Lip(f_{n_1})(x)+\frac{2\veps}{4}\leq
\|\Lip (f_{n_1})\|_{\infty}+\frac{\veps}{2}\leq M+\frac{\veps}{2}.
$$

Now, let us see what happens with $f$. If $n\geq n_1$, $0<r<r_0$ and
$y\in B(x,r)$, we have that
\begin{multline*}
\frac{|f(x)-f(y)|}{r}\leq \frac{|f(x)-f_n(x)|}{r}+\frac{|f_n(x)-f_n(y)|}{r}+\frac{|f_n(y)-f(y)|}{r}\\
\leq
\frac{|f(x)-f_n(x)|}{r}+\frac{|f_n(y)-f(y)|}{r}+M+\frac{\veps}{2}.
\end{multline*}
Since $\{f_n\}_n$ converges uniformly to $f$, it converges pointwise
to $f$ and so there exists $n\geq n_1$ such that
$$
{|f(x)-f_n(x)|}+{|f_n(y)-f(y)|}<\frac{\veps r}{2}.
$$
Putting all above together we deduce that
$$
\frac{|f(x)-f(y)|}{r}\leq M+\veps.
$$
Thus, that inequality implies, upon taking the infimum over $B(x,r)$
and letting $r$ tending to $0$ that
$$ \Lip(f)(x)\leq M+\veps
$$
for each $x\in X$. Now, if $\veps\rightarrow 0$, we have that
$\Lip(f)(x)\leq M$ for each $x\in X$. And so $\|\Lip
f\|_{\infty}\leq M<+\infty$ which implies $f\in D(X)$.

To finish the proof, let us see that $\|\Lip
(f_n-f)\|_{\infty}\longrightarrow 0$. Using the above notation we
have that if $n,m\geq n_1$ and $0<r<r_0$
\begin{multline*}
\frac{|f_n(x)-f(x)-(f_n(y)-f(y))|}{r}\leq
\frac{|f_n(x)-f_m(x)-(f_n(y)-f_m(y))|}{r}\\+\frac{|f_m(x)-f(x)|}{r}+\frac{|f(y)-f_m(y)|}{r}
\leq
\frac{|f_m(x)-f(x)|}{r}+\frac{|f(y)-f_m(y)|}{r}+\frac{\veps}{4}.
\end{multline*}
The sequence $\{f_n\}_n$ converges uniformly to $f$ and, in
particular, it converges pointwise to $f$. Thus, there exists $n\geq
n_1$ such that
$$
{|f(x)-f_n(x)|}+{|f_n(y)-f(y)|}<\frac{\veps r}{2}.
$$
Hence, we have
$$
\frac{|f_n(x)-f(x)-(f_n(y)-f(y))|}{r}<\veps.
$$
Thus, we deduce that if $n\geq n_1$, then $\Lip (f_n-f)(x)\leq
\veps.$ This is true for each $x\in X$, and so we obtain that
$\|\Lip (f_n-f)\|_{\infty}\leq \veps$ if $n\geq n_1$. Therefore, we
have that $\|\Lip (f_n-f)\|_{\infty}\longrightarrow 0$. Thus, we
conclude that $(D^{\infty}(X),\|\cdot\|_{D^{\infty}})$ is a Banach
space as wanted.
\end{proof}

Let us see however that in general
$(D^{\infty}(X),\|\cdot\|_{D^{\infty}})$ is not a Banach space.
\begin{example}\em\label{nobanach}
Consider the connected metric space
$X=X_0\cup\bigcup_{n=1}^{\infty}X_n\cup G\subset\R^2 $ with the
metric induced by the Euclidean one, where
$X_0=\{0\}\times[0,+\infty)$, $X_n=\{\frac{1}{n}\}\times[0,n]$,
$n\in\N$ and $G=\{(x,\frac{1}{x}):0<x\leq 1\}$. For each $n\in\N$
consider the sequence of functions $f_n:X\rightarrow [0,1]$ given by
$$
f_n\Big(\frac{1}{k},y\Big)= \left\{
\begin{array}{ll}
\frac{k-y}{k\sqrt{k}}&\text{if $1\leq k\leq n$}\\[5pt]
0&\text{if $k>n$},
\end{array}
\right.
$$
and $f_n(x,y)=0$ if $x\neq \frac{1}{k}$ $\forall k\in\N$. Observe
that $f_n(\frac{1}{k},0)=\frac{1}{\sqrt{k}}$ and
$f_n(\frac{1}{k},k)=0$ if $1\leq k\leq n$. Since $\Lip
f_n(\frac{1}{k},y)=\frac{1}{k\sqrt{k}}$ and $\Lip f_n(x,y)=0$ if
$x\neq\frac{1}{k}$ $\forall k\in\N$, we have that $f_n\in
D^{\infty}(X)$ for each $n\geq1$. In addition, if $1<n<m$,
$$
\|f_n-f_m\|_{\infty}=\frac{1}{\sqrt{n+1}}\quad\text{ and }\quad
\|\Lip(f_n-f_m)\|_{\infty}=\frac{1}{(n+1)\sqrt{n+1}}.
$$
Thus, we deduce that $\{f_n\}_n$ is a Cauchy sequence in
$(D^{\infty}(X),\|\cdot\|_{D^{\infty}})$. However, if
$f_n\rightarrow f$ in $D^{\infty}$ then $f_n\rightarrow f$
pointwise. Then $f_m(\frac{1}{n},0)=\frac{1}{\sqrt{n}}$ for each
$m\geq n$ and so $f (\frac{1}{n},0)=\frac{1}{\sqrt{n}}$ and
$f(0,0)=0$. Thus, we obtain that
$$
\Lip(f)(0,0)\geq\lim_{n\rightarrow\infty}\frac{|f((\frac{1}{n}),0)-f(0,0)|}{d(\frac{1}{n},0)}=\lim_{n\rightarrow\infty}\frac{\frac{1}{\sqrt{n}}}{\frac{1}{n}}=+\infty,
$$ and so $f\notin D^{\infty}(X)$. This
means that $(D^{\infty}(X),\|\cdot\|_{D^{\infty}})$  is not a Banach
space.
\end{example}

\begin{thm}\label{DpuntoBanach}
Let $(X,d)$ be a connected locally radially quasi-convex metric
space and let $x_0\in X$. If we consider on $D(X)$ the norm
$\|\cdot\|_D=\max\{|f(x_0)|,\|\Lip (\cdot)\|_{\infty}\}$, then
$(D(X),\|\cdot\|_D)$ is a Banach space.
\end{thm}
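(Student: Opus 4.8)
The plan is to follow the proof of Theorem \ref{sol1}, the essential new difficulty being that functions in $D(X)$ need not be bounded, so one cannot extract a uniform limit from uniform Cauchyness. First I would check that $\|\cdot\|_D$ is indeed a norm: it is a seminorm, being the maximum of the seminorm $\|\Lip(\cdot)\|_\infty$ and the seminorm $f\mapsto|f(x_0)|$; and if $\|f\|_D=0$, then $\|\Lip f\|_\infty=0$, so Lemma \ref{help}, applied on each neighbourhood $U^x$ furnished by Definition \ref{locradqua}, forces $f$ to be locally constant, hence constant on $X$ by connectedness; since $f(x_0)=0$, this gives $f\equiv 0$.

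Now let $\{f_n\}_n$ be a Cauchy sequence in $(D(X),\|\cdot\|_D)$; then $\{f_n(x_0)\}_n$ is Cauchy in $\R$, the sequence $\{f_n\}_n$ is Cauchy for the seminorm $\|\Lip(\cdot)\|_\infty$, and in particular the numbers $\|\Lip f_n\|_\infty$ are uniformly bounded, say by $M$. The first, and I expect the only genuinely new, step is to produce a pointwise limit by a connectedness argument. Set $A=\{x\in X:\{f_n(x)\}_n \text{ converges in }\R\}$, so $x_0\in A$. Given $x\in A$, pick $U^x$ and $K_x$ as in Definition \ref{locradqua}; for any $y\in U^x$ there is a rectifiable curve in $U^x$ from $x$ to $y$ of length at most $K_xd(x,y)$, so Lemma \ref{help} applied to $f_n-f_m$ yields
$$
|f_n(y)-f_m(y)|\le|f_n(x)-f_m(x)|+\|\Lip(f_n-f_m)\|_\infty\,K_x\,d(x,y),
$$
whence $\{f_n(y)\}_n$ is Cauchy and $U^x\subseteq A$; thus $A$ is open. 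For $x\in\overline A$, choose $z\in U^x\cap A$ (possible since $U^x$ is a neighbourhood of $x$); taking $y=z$ in the displayed inequality and rearranging gives $|f_n(x)-f_m(x)|\le|f_n(z)-f_m(z)|+\|\Lip(f_n-f_m)\|_\infty K_xd(x,z)\to 0$, so $x\in A$ and $A$ is closed. By connectedness $A=X$, so there is $f:X\to\R$ with $f_n\to f$ pointwise; restricting the displayed inequality to a ball $B(x,r)\subseteq U^x$ also shows the convergence is uniform on $B(x,r)$, so $f$ is continuous.

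With the pointwise limit $f$ in hand, I would run the second half of the proof of Theorem \ref{sol1} essentially verbatim: every estimate there is local around a point $x$, using only Lemma \ref{help} on $U^x$, the Cauchy property for $\|\Lip(\cdot)\|_\infty$, the uniform bound $M$, and pointwise convergence $f_n\to f$, none of which relied on boundedness. This gives $\|\Lip f\|_\infty\le M$, hence $f\in D(X)$, and then $\|\Lip(f_n-f)\|_\infty\to 0$. Since also $f_n(x_0)\to f(x_0)$, we get $\|f_n-f\|_D\to 0$, proving completeness. The hard part is thus really just the clopen-set argument replacing the use of uniform Cauchyness; once that provides the candidate limit, the verification that it lies in $D(X)$ and that convergence holds in $\|\Lip(\cdot)\|_\infty$ is a rerun of Theorem \ref{sol1}.
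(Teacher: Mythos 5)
Your proof is correct and follows essentially the same route as the paper's: obtain a pointwise limit by propagating the Cauchy property from $x_0$ via connectedness and Lemma \ref{help}, then rerun the second half of Theorem \ref{sol1}, which (as the paper itself notes) only uses pointwise convergence. The sole difference is cosmetic: where the paper chains the neighbourhoods $U^{y_k}$ to build an explicit rectifiable curve from $x_0$ to an arbitrary $x$, you show the set of points where $\{f_n(x)\}_n$ converges is clopen; both are standard ways of exploiting connectedness for the same end.
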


\begin{proof}
By hypothesis, for each $y\in X$, there exists a neighborhood $U^y$
such that for each $z\in U^y$, there exists a rectifiable curve in
$U^y$ connecting $z$ and $y$. Since $X$ is connected, there exists a
finite sequence of points $y_1,\ldots,y_m$ such that $U^{y_k}\cap
U^{y_{k+1}}\neq \emptyset$ for $k=1,\ldots,m-1$, $x\in U^{y_1}$ and
$x_0\in U^{y_m}$. Now, for each $k=1\ldots m$, choose a point
$z_k\in U^{y_k}\cap U^{y_{k+1}}$.  To simplify notation we write
$z_0=x_0$ and $z_{n+1}=x$. For each $k=1\ldots m$, we choose a curve
$\gamma_k$ which connects $z_k$ with $z_{k+1}$. Taking
$\gamma=\gamma_{0}\cup\ldots\gamma_{m}$ we obtain a rectifiable
curve $\gamma$ which connects $x_0$ and $x$.

Let us see now that $(D(X),\|\cdot\|_D)$ is a Banach space. Indeed,
let $\{f_n\}_n$ be a Cauchy sequence. We consider the case on which
$f_n(x_0)=0$ for each $n\geq1$. The general case can be done in a
similar way. By combining the previous argument with Lemma
\ref{help}, we obtain that for $n,m\geq 1$ and for each $x\in X$, we
have that
$$
|f_n(x)-f_m(x)|\leq \|\Lip (f_n-f_m)\|_{\infty}\ell(\gamma)
$$
where $\gamma$ is a rectifiable curve connecting $x$ and $x_0$.
Since $\{f_n\}_n$ is a Cauchy sequence with respect to the seminorm
$\|\Lip (\cdot)\|_{\infty}$, the sequence $\{f_n(x)\}_n$ is a Cauchy
sequence  for each $x\in X$, and therefore, it converges to a point
$y=f(x)$. Then, in particular, $\{f_n\}_n$ converges pointwise to a
function $f:X\to\R$.

Next, one finds using the same strategy as in Theorem \ref{sol1}
(where we have just used the pointwise convergence) that a Cauchy
sequence $\{f_n\}_n\subset D(X)$ such that $f_n(x_0)=0$ for each
$n\geq 1$, converges in $(D(X),\|\cdot\|_D)$ to a function $f\in
D(X)$.
\end{proof}

We are now prepared to state the converse of Corollary \ref{main2}.
\begin{cor}\label{converse}
Let $(X,d)$ be a connected locally radially quasi-convex metric
space such that $\LIP(X)=D(X)$. Then $X$ is a quasi-length space.
\end{cor}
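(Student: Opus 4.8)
The goal is to show that if $(X,d)$ is connected, locally radially quasi-convex, and satisfies $\LIP(X)=D(X)$, then $X$ is a quasi-length space. The plan is to use the closed graph theorem applied to the inclusion map between two Banach spaces, deduce a uniform bound, and then convert that bound into the chain condition defining a quasi-length space. Note first that by Lemma \ref{dcont} and Corollary \ref{lengthspace}'s proof technique, $\LIP(X)\subset D(X)$ always, so the hypothesis $\LIP(X)=D(X)$ is really the inclusion $D(X)\subset\LIP(X)$; we exploit this.

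The first step is to fix a base point $x_0\in X$ and equip $D(X)$ with the norm $\|f\|_D=\max\{|f(x_0)|,\|\Lip f\|_\infty\}$, which by Theorem \ref{DpuntoBanach} makes $(D(X),\|\cdot\|_D)$ a Banach space. On $\LIP(X)$ we put the analogous norm $\|f\|_{\LIP}=\max\{|f(x_0)|,\LIP(f)\}$, where $\LIP(f)$ is the global Lipschitz constant; this is also a Banach space (standard). The hypothesis $D(X)=\LIP(X)$ means the identity map $\iota\colon (D(X),\|\cdot\|_D)\to(\LIP(X),\|\cdot\|_{\LIP})$ is a well-defined linear bijection. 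To apply the closed graph theorem I would check that $\iota$ has closed graph: if $f_n\to f$ in $\|\cdot\|_D$ and $f_n\to g$ in $\|\cdot\|_{\LIP}$, then both convergences force pointwise convergence (the $\|\cdot\|_D$ convergence gives it via the curve estimate in Theorem \ref{DpuntoBanach}, and $\|\cdot\|_{\LIP}$ convergence gives it directly), so $f=g$. Hence $\iota$ is bounded: there is a constant $K\geq 1$ with $\LIP(f)\leq \|f\|_{\LIP}\leq K\|f\|_D\leq K\max\{|f(x_0)|,\|\Lip f\|_\infty\}$ for all $f\in D(X)$. In particular, for every $f\in D(X)$,
$$
|f(x)-f(y)|\leq \LIP(f)\,d(x,y)\leq K\max\{|f(x_0)|,\|\Lip f\|_\infty\}\,d(x,y).
$$

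The second step is to use this uniform bound with well-chosen test functions to produce the chain inequality. Fix $x,y\in X$ and $\varepsilon>0$; I want an $\varepsilon$-chain from $x$ to $y$ with $(\ell-1)\varepsilon\leq K'(d(x,y)+\varepsilon)$. The natural candidate is a distance-type function built from the quantized distance $D_\varepsilon$: following the Semmes-style argument behind Lemma \ref{casilon}, define $\rho_\varepsilon(z)$ to be $\varepsilon$ times the minimal number of steps in an $\varepsilon$-chain from $x_0$ to $z$ (set to $+\infty$ if no such chain exists — but connectedness plus local radial quasi-convexity guarantees every point is $\varepsilon$-chainable to $x_0$, since locally one has rectifiable curves which can be $\varepsilon$-subdivided, and connectedness propagates this). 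One checks $D_\varepsilon\rho_\varepsilon(z)\leq 1$ at every point, so $\Lip(\rho_\varepsilon)\leq 1$ after passing to a genuine infinitesimally-Lipschitz modification (or one works with a local-quasiconvexity-truncated variant $f=\min\{\rho_\varepsilon,\,N\}$ to stay bounded near $x_0$), hence $\rho_\varepsilon\in D(X)$ with $\|\Lip\rho_\varepsilon\|_\infty\leq 1$ and $\rho_\varepsilon(x_0)=0$. Feeding $\rho_\varepsilon$ into the displayed inequality yields $|\rho_\varepsilon(x)-\rho_\varepsilon(y)|\leq K\,d(x,y)$, and by the triangle-type behaviour of $\rho_\varepsilon$ one extracts an $\varepsilon$-chain joining $x$ and $y$ whose number of nodes is controlled by $(\rho_\varepsilon(x)+\rho_\varepsilon(y))/\varepsilon$ plus bounded correction terms; chaining through $x_0$ and using $\rho_\varepsilon(x),\rho_\varepsilon(y)\leq$ (roughly) $K d(x,x_0), K d(y,x_0)$ is wasteful, so instead one localizes $\rho_\varepsilon$ to measure $\varepsilon$-chain distance from $x$ rather than from $x_0$, making $\rho_\varepsilon(x)=0$ and $\rho_\varepsilon(y)=(\ell_{\min}-1)\varepsilon$; then the inequality directly gives $(\ell_{\min}-1)\varepsilon\leq K(d(x,y)+\varepsilon)$, which is exactly the quasi-length condition with constant $K$.

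The main obstacle I anticipate is the technical point that $\rho_\varepsilon$ need not be bounded and its infinitesimal Lipschitz constant must be verified to be $\leq 1$ everywhere (including isolated-behaviour points and points where the chain-distance function has small local variation); handling this cleanly may require either truncating $\rho_\varepsilon$ on a large ball — which is legitimate because $\LIP(X)=D(X)$ and the bound is applied to each truncation uniformly, then one lets the truncation level grow — or invoking the characterization in Lemma \ref{casilon} in the reverse direction, i.e.\ once one knows $|f(x)-f(y)|\leq K(d(x,y)+\varepsilon)\sup_z D_\varepsilon f(z)$ holds for the single function $f=\rho_\varepsilon$, that is already enough. A secondary subtlety is justifying that $\rho_\varepsilon$ lands in $D(X)$ at all, which is where local radial quasi-convexity is used: it ensures that near each point the $\varepsilon$-chain distance agrees (up to $O(\varepsilon)$) with a rectifiable-curve length, so the limsup defining $\Lip\rho_\varepsilon$ stays $\leq 1$. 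Modulo these bookkeeping issues the argument is the closed graph theorem packaged with the standard chain-distance test function.
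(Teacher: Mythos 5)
Your first step coincides with the paper's own argument: both $(D(X),\|\cdot\|_D)$ and $(\LIP(X),\|\cdot\|_{\LIP})$ are Banach spaces, $\|\cdot\|_D\le\|\cdot\|_{\LIP}$, and the closed graph (or bounded inverse) theorem gives a constant $K$ with $\|f\|_{\LIP}\le K\|f\|_D$; replacing $f$ by $f-f(x_0)$ as the paper does turns this into the base-point-free estimate $\LIP(f)\le K\|\Lip f\|_{\infty}$ for every $f\in D(X)$. Up to this point you and the paper agree.

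The divergence, and the gap, is in your second step. The paper constructs no test function at all: it takes an arbitrary $f$ with $\sup_{z}D_{\varepsilon}f(z)<\infty$, argues that such an $f$ lies in $D(X)$ with $\|\Lip f\|_{\infty}\le\sup_{z}D_{\varepsilon}f(z)$, and feeds the uniform bound $\LIP(f)\le K\|\Lip f\|_{\infty}$ directly into the functional characterization of Lemma \ref{casilon}. You instead try to verify the chain definition by applying your estimate to the quantized chain-distance $\rho_{\varepsilon}$, and this fails because $\rho_{\varepsilon}\notin D(X)$: it takes values in $\varepsilon\Z$, so at any point $z$ lying on the boundary between two level sets there are points $w$ arbitrarily close to $z$ with $|\rho_{\varepsilon}(z)-\rho_{\varepsilon}(w)|=\varepsilon$, whence $\Lip\rho_{\varepsilon}(z)=+\infty$. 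This already happens on $X=[0,1]$, where $\rho_{\varepsilon}$ jumps by $\varepsilon$ across each multiple of $\varepsilon$; so local radial quasi-convexity cannot rescue the claim $\|\Lip\rho_{\varepsilon}\|_{\infty}\le1$. What the chain function genuinely satisfies is only the positive-scale bound $\sup_{z}D_{\varepsilon}\rho_{\varepsilon}(z)\le1$, while your closed-graph estimate controls the infinitesimal quantity $\|\Lip f\|_{\infty}$ and only for $f\in D(X)$. Truncating $\rho_{\varepsilon}$ leaves the jumps in place, and any Lipschitz regularization destroys exactly the chain-counting information you need, so neither of your proposed repairs closes the gap. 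The missing ingredient is a bridge between $\|\Lip f\|_{\infty}$ and $\sup_{z}D_{\varepsilon}f(z)$ valid for general $f$, which is precisely what the paper's use of Lemma \ref{casilon} --- verifying the Semmes inequality for every function at once rather than for one singular test function --- is designed to supply.
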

\begin{proof}
In view of Lemma \ref{casilon} we have to prove that there exists
$K>0$ such that for each $\varepsilon>0$ and each function
$f:X\longrightarrow\R$ we have that:
$$
|f(x)-f(y)|\leq K(d(x,y)+\varepsilon)\sup_{z\in
X}D_{\varepsilon}f(z)\qquad \forall x,y\in X\quad (*).
$$
Indeed, let $\varepsilon>0$. If $\sup_{z\in
X}D_{\varepsilon}f(z)=\infty$, then $(*)$ is trivially true. Thus,
we may assume that $\sup_{z\in X}D_{\varepsilon}f(z)<\infty$. Since
$\|\Lip f\|_{\infty}\leq \sup_{z\in X}D_{\varepsilon}f(z)$ then
$f\in D(X)$ and we distinguish two cases:
\begin{itemize}
\item[$(1)$] If $\|\Lip f\|_{\infty}=0$ then $f$ is locally constant and so constant because $X$ is connected. Therefore, the inequality trivially holds.
\item [$(2)$]If $\|\Lip f\|_{\infty}\neq0$, using that $f\in D(X)=\LIP(X)$, we have the
following inequality
$$
|f(x)-f(y)|\leq \LIP(f) d(x,y)\qquad\forall x,y\in X.
$$

Now, fix a point $x_0\in X$. Since $\LIP(X)=D(X)$ is a Banach space
with both norms
$$
\|f\|_{\LIP}=\max\{\LIP(f),|f(x_0)|\}\quad\text{ and
}\quad\|f\|_D=\max\{\|\Lip f\|_{\infty},|f(x_0)|\},
$$
(see Theorem \ref{DpuntoBanach} and e.g. \cite{Wea}) and
$\|\cdot\|_D\leq\|\cdot\|_{\LIP}$, then there exists a constant
$K>0$ such that $\|\cdot\|_{\LIP}\leq K\|\cdot\|_D$. Thus, if we
consider the function $g=f-f(x_0)$ we have that
$$
\LIP(f)=\LIP(g)=\|g\|_{\LIP}\leq K \|g\|_D=K \|\Lip
g\|_{\infty}=K\|\Lip f\|_{\infty}\quad(\heartsuit).
$$ Thus, we obtain that
\begin{equation*}
\begin{split}
|f(x)-f(y)|\leq &\LIP(f) d(x,y)\leq
\LIP(f)(d(x,y)+\varepsilon)\stackrel{(\heartsuit)}{\leq} K \|\Lip
f\|_{\infty}(d(x,y)+\varepsilon)\\\leq& K\sup_{z\in
X}D_{\varepsilon}f(z)(d(x,y)+\varepsilon)\quad\forall x,y\in X,
\end{split}
\end{equation*}
as wanted.
\end{itemize}
\end{proof}

\section{A Banach-Stone Theorem for infinitesimally Lipschitz functions}
There exist many results in the literature relating the topological
structure of a topological space $X$ with the algebraic or
topological-algebraic structures of certain function spaces defined
on it. The classical Banach-Stone theorem asserts that for a compact
space $X$, the linear metric structure of $\Cont(X)$ endowed with
the sup-norm determines the topology of $X$. Results along this line
for spaces of Lipschitz functions have been recently obtained in
\cite{GaJa2, GaJa3}.  In this section we prove two versions of the
Banach-Stone theorem for the function spaces $D^{\infty}(X)$ and
$D(X)$ respectively, where $X$ is a locally radially quasi-convex
space. Since in general $D(X)$ has not an algebra structure we will
consider on it its natural unital vector lattice structure. On the
other hand, on $D^{\infty}(X)$ we will consider both, its algebra
and its unital vector lattice structures.

The concept of real-valued infinitesimally Lipschitz function can be
generalized in a natural way when the target space is a metric
space.

\begin{defn}\em
Let $(X,d_X)$ and $(Y,d_Y)$ be metric spaces. Given a function
$f:X\rightarrow Y$ we define
$$
\Lip f(x)=\limsup_{\substack{y\to x\\ y\neq x}}
\frac{d_Y(f(x),f(y))}{d_X(x,y)}
$$
for each non-isolated $x\in X$. If $x$ is an isolated point we
define $\Lip f(x)=0$. We consider the following space of functions
$$
D(X,Y)=\{f:X\longrightarrow Y\,: \|\Lip f\|_{\infty}<+\infty\}.
$$
\end{defn}
As we have seen in Lemma \ref{dcont} we may observe that if $f\in
D(X,Y)$ then $f$ is  continuous. It can be easily checked
that we have also a Leibniz's rule in this context, that is, if
$f,g\in D^{\infty}(X)$, then $\|\Lip(f\cdot g)\|_{\infty}\leq \|\Lip
f\|_{\infty}\,\|g\|_{\infty}+ \|\Lip g\|_{\infty}\,\|f\|_{\infty}$.
In this way, we can always endow the space $D^{\infty}(X)$ with a natural
algebra structure. Note that $D^{\infty}(X)$ is {\it uniformly separating}
in the sense that for every pair of subsets $A$ and $B$ of $X$ with
$d(A,B)>0$, there exists some $f\in D^{\infty}(X)$ such that
$\overline{f(A)}\cap\overline{f(B)}=\emptyset$. In our case, if $A$
and $B$ are subsets of $X$ with $d(A,B)=\alpha>0$, then the function
$f=\inf\{d(\cdot,A),\alpha\}\in\LIP^{\infty}(X)\subset
D^{\infty}(X)$ satisfies that $f=0$ on $A$ and $f=\alpha$ on $B$. In
addition, we can endow either $D^{\infty}(X)$ or $D(X)$ with a
natural unital vector lattice structure.

We denote by ${\mathcal H}(D^{\infty}(X))$ the set of all nonzero
algebra homomorphisms $\varphi:D^{\infty}(X)\rightarrow \R$, that
is, the set of all nonzero multiplicative linear functionals on
$D^{\infty}(X)$. Note that in particular every algebra homomorphism
$\varphi \in {\mathcal H}(D^{\infty}(X))$ is positive, that is,
$\varphi(f)\geq 0$ when $f\geq 0$. Indeed, if $f$ and $1/f$ are in
$D^{\infty}(X)$, then $\varphi(f\cdot(1/f))=1$ implies that
$\varphi(f)\neq 0$ and $\varphi(1/f)=1/\varphi(f)$. Thus, if we
assume that $\varphi$ is not positive, then there exists $f\geq 0$
with $\varphi(f)<0$. The function $g=f-\varphi(f)\geq
-\varphi(f)>0$, satisfies $g\in D^{\infty}(X)$, $1/g\in
D^{\infty}(X)$ and $\varphi(g)=0$ which is a contradiction.

 Now, we endow ${\mathcal H}(D^{\infty}(X))$ with the
topology of pointwise convergence (that is, considered as a
topological subspace of $\R^{D^{\infty}(X)}$ with the product
topology). This construction is standard (see for instance
\cite{I}), but we give some details for completeness. It is easy to
check that ${\mathcal H}(D^{\infty}(X))$ is closed in
$\R^{D^{\infty}(X)}$ and therefore is a compact space. In addition,
since $D^{\infty}(X)$ separates points and closed sets, $X$ can be
embedded as a topological subspace of ${\mathcal H}(D^{\infty}(X))$
identifying each $x\in X$ with the point evaluation homomorphism
$\delta_x$ given by $\delta_x(f)=f(x)$, for every $f\in
D^{\infty}(X)$. We are going to see that $X$ is dense in ${\mathcal
H}(D^{\infty}(X))$. Indeed, given $\varphi\in {\mathcal
H}(D^{\infty}(X))$, $f_1,\ldots, f_n\in D^{\infty}(X)$, and
$\varepsilon>0$, there exists some $x\in X$ such that
$|\delta_x(f_i)-\varphi(f_i)|<\varepsilon$, for $i=1,\ldots,n$.
Otherwise, the function $g=\sum_{i=1}^n|f_i-\varphi(f_i)|\in
D^{\infty}(X)$ would satisfy $g\geq \varepsilon$ and $\varphi(g)=0$,
and this is impossible since $\varphi$ is positive. It follows that
${\mathcal H}(D^{\infty}(X))$ is a compactification of $X$.
Moreover, every $f\in D^{\infty}(X)$ admits a continuous extension
to ${\mathcal H}(D^{\infty}(X))$, namely by defining
$\widehat{f}(\varphi)=\varphi(f)$ for all $\varphi\in {\mathcal
H}(D^{\infty}(X))$.

\begin{lem}\label{homcontinuo}
Let $(X,d)$ be a metric space and $\varphi\in{\mathcal
H}(D^{\infty}(X))$. Then, $\varphi:D^{\infty}(X)\rightarrow \R$ is a
continuous map.
\end{lem}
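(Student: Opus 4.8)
The plan is to show that $\varphi$ is bounded on $D^{\infty}(X)$ with respect to the norm $\|\cdot\|_{D^{\infty}}$, since $\varphi$ is linear and boundedness is equivalent to continuity. Actually, the cleanest route is to prove the sharp estimate $|\varphi(f)|\le \|f\|_{\infty}$ for every $f\in D^{\infty}(X)$, which immediately gives continuity. First I would record the two facts established just before the statement: every $\varphi\in\haus(D^{\infty}(X))$ is positive (preserves the order), and $\varphi$ is multiplicative and linear with $\varphi(1)=1$. From positivity and linearity one gets the standard consequence that $\varphi$ is monotone: if $f\le g$ then $\varphi(f)\le\varphi(g)$.

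The key step is then a spectral-type argument. Fix $f\in D^{\infty}(X)$ and let $c=\|f\|_{\infty}$. Then $-c\le f\le c$ as functions on $X$, so applying $\varphi$ together with monotonicity and $\varphi(1)=1$ yields $-c\le\varphi(f)\le c$, i.e. $|\varphi(f)|\le\|f\|_{\infty}$. To make the monotonicity step legitimate one must know that the constant functions lie in $D^{\infty}(X)$ (they do, with $\Lip(\text{const})=0$) and that $c\pm f\ge 0$ are genuinely elements of $D^{\infty}(X)$ on which $\varphi$ acts — both are clear since $D^{\infty}(X)$ is an algebra containing the constants and closed under the operations used. Hence $|\varphi(f)|\le\|f\|_{\infty}\le\|f\|_{D^{\infty}}$ for all $f$.

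Finally, since $\varphi$ is linear and $|\varphi(f)-\varphi(g)|=|\varphi(f-g)|\le\|f-g\|_{\infty}\le\|f-g\|_{D^{\infty}}$, the map $\varphi$ is (Lipschitz, hence) continuous from $(D^{\infty}(X),\|\cdot\|_{D^{\infty}})$ to $\R$. I do not expect a genuine obstacle here; the only point requiring a little care is justifying that one may compare $f$ with the constant $\|f\|_{\infty}$ inside $D^{\infty}(X)$ and invoke positivity of $\varphi$ — that is, making explicit that $\varphi$ being a nonzero algebra homomorphism forces $\varphi(1)=1$ and forces positivity (already argued in the text), from which the norm bound is automatic. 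An alternative, should one prefer to avoid the order argument, is to note that for $\lambda\in\R$ with $|\lambda|>\|f\|_{\infty}$ the function $\lambda-f$ is invertible in $D^{\infty}(X)$ (its reciprocal is bounded and a short computation with the Leibniz rule shows its infinitesimal Lipschitz constant is bounded), so $\varphi(\lambda-f)\ne 0$, giving $\varphi(f)\ne\lambda$; hence $|\varphi(f)|\le\|f\|_{\infty}$ and continuity follows as before.
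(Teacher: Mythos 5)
Your proposal is correct, and both of your suggested routes (the order argument and the bounded-inversion argument) are valid, but they differ from the paper's own proof. The paper does not argue directly with positivity at this point: instead it uses the compactification machinery set up just before the lemma, namely that every $f\in D^{\infty}(X)$ extends continuously to $\widehat{f}$ on ${\mathcal H}(D^{\infty}(X))$ with $\widehat{f}(\varphi)=\varphi(f)$, and that $X$ is dense in the compact space ${\mathcal H}(D^{\infty}(X))$; from this it reads off $|\varphi(f)|=|\widehat{f}(\varphi)|\leq\sup_{\eta}|\widehat{f}(\eta)|=\sup_{x\in X}|f(x)|\leq\|f\|_{D^{\infty}}$. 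Your first argument gets the same sharp bound $|\varphi(f)|\leq\|f\|_{\infty}$ more elementarily: since $\varphi$ is a nonzero multiplicative linear functional one has $\varphi(1)=1$, positivity gives monotonicity, and comparing $f$ with the constants $\pm\|f\|_{\infty}$ (which lie in $D^{\infty}(X)$ with vanishing $\Lip$) yields the estimate without ever invoking the compactification or the density of $X$ in it. Your alternative via invertibility of $\lambda-f$ for $|\lambda|>\|f\|_{\infty}$ also works, since the paper itself verifies later (in the proof of the Banach--Stone theorem) that $D^{\infty}(X)$ is closed under bounded inversion. What the paper's route buys is economy within its own development --- the extension $\widehat{f}$ and the density of $X$ are needed anyway for the Banach--Stone theorem, so the lemma comes for free once that scaffolding is in place; what your route buys is independence from that scaffolding and a proof that would survive even if one had not yet established that ${\mathcal H}(D^{\infty}(X))$ is a compactification of $X$.
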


\begin{proof}
Let $f\in D^{\infty}(X)$. We know that it admits a continuous
extension $\widehat{f}:{\mathcal H}(D^{\infty}(X))\rightarrow\R$ so
that $\widehat{f}(\varphi)=\varphi(f)$.
 Thus, since $X$ is dense in ${\mathcal H}(D^{\infty}(X))$,
 $$
 |\varphi(f)|=|\widehat{f}(\varphi)|\leq\sup_{\eta\in{\mathcal H}(D^{\infty}(X))}|\widehat{f}(\eta)|=\sup_{x\in
 X}|f(x)|\leq\|f\|_{D^{\infty}}
 $$
and we are done.
 \end{proof}

Recall that we have shown in Theorem \ref{sol1} that if $X$ is a
locally radially quasi-convex space then
$(D^{\infty}(X),\|\cdot\|_{D^{\infty}})$ is a Banach space. Using
this in a crucial way, we next give some results which will give
rise to a Banach-Stone theorem for $D^{\infty}(X)$.

\begin{lem}\label{grafcer}
Let $(X,d_X)$ and $(Y,d_Y)$ be locally radially quasi-convex metric
spaces. Then, every unital algebra homomorphism
$T:D^{\infty}(X)\rightarrow D^{\infty}(Y)$ is continuous for the
respective $D^{\infty}$-norms.
\end{lem}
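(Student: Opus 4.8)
The plan is to deduce continuity of $T$ from the closed graph theorem. Since $X$ and $Y$ are locally radially quasi-convex, Theorem \ref{sol1} tells us that $(D^{\infty}(X),\|\cdot\|_{D^{\infty}})$ and $(D^{\infty}(Y),\|\cdot\|_{D^{\infty}})$ are Banach spaces. As $T$ is an algebra homomorphism it is in particular linear, so it suffices to check that the graph of $T$ is closed: if $f_n\to f$ in $D^{\infty}(X)$ and $Tf_n\to g$ in $D^{\infty}(Y)$, we must show that $Tf=g$.

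The key observation is that point evaluations detect this. Fix $y\in Y$ and consider $\delta_y\circ T:D^{\infty}(X)\to\R$, where $\delta_y(h)=h(y)$. This is a composition of algebra homomorphisms, hence an algebra homomorphism, and it is nonzero: since $T$ is unital, $(\delta_y\circ T)(1)=\delta_y(T1)=\delta_y(1)=1$. Therefore $\delta_y\circ T\in{\mathcal H}(D^{\infty}(X))$, and by Lemma \ref{homcontinuo} it is $\|\cdot\|_{D^{\infty}}$-continuous. Consequently, from $f_n\to f$ in $D^{\infty}(X)$ we get $(\delta_y\circ T)(f_n)\to(\delta_y\circ T)(f)$, that is, $Tf_n(y)\to Tf(y)$.

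On the other hand, since $\|\cdot\|_{\infty}\leq\|\cdot\|_{D^{\infty}}$, convergence $Tf_n\to g$ in $D^{\infty}(Y)$ implies uniform, and in particular pointwise, convergence, so $Tf_n(y)\to g(y)$. By uniqueness of limits in $\R$ we obtain $Tf(y)=g(y)$ for every $y\in Y$, i.e. $Tf=g$. Hence the graph of $T$ is closed and $T$ is continuous.

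The substantive input here is Lemma \ref{homcontinuo} (automatic continuity of scalar algebra homomorphisms on $D^{\infty}(X)$), together with the Banach space structure from Theorem \ref{sol1}; given these, there is no real obstacle. The only point requiring a touch of care is verifying that $\delta_y\circ T$ is nonzero so that Lemma \ref{homcontinuo} applies, which is precisely where the hypothesis that $T$ is unital is used.
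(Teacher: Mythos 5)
Your proposal is correct and follows essentially the same route as the paper: closed graph theorem on the Banach spaces from Theorem \ref{sol1}, with Lemma \ref{homcontinuo} applied to $\delta_y\circ T$ to get pointwise convergence of $Tf_n$ to $Tf$, compared against the pointwise convergence to $g$. Your explicit check that $\delta_y\circ T$ is nonzero via unitality is a small point the paper leaves implicit, but the argument is the same.
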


\begin{proof}
In order to prove the continuity of the linear map $T$, we apply the
closed graph theorem. It is enough to check that given a sequence
$\{f_n\}_n\subset D^{\infty}(X)$ with $\|f_n-f\|_{ D^{\infty}}$
convergent to zero and $g\in D^{\infty}(X)$ such that
$\|T(f_n)-g\|_{ D^{\infty}}$ also convergent to zero, then $T(f)=g$.
Indeed, let $y\in Y$, and let $\delta_y\in {\mathcal
H}(D^{\infty}(Y))$ be the homomorphism given by the evaluation at
$y$, that is, $\delta_y(h)=h(y)$. By Lemma \ref{homcontinuo}, we
have that $\delta_y \circ T\in {\mathcal H}(D^{\infty}(X))$ is
continuous and so
$$
T(f_n)(y)=(\delta_y\circ T)(f_n)\to(\delta_y\circ T)(f)=T(f)(y)
$$
when $n\rightarrow \infty$.

On the other hand, since convergence in $D^{\infty}-$ norm implies
pointwise convergence, then $T(f_n)(y)$ converges to $g(y)$. That
is, $T(f)(y)=g(y)$, for each $y\in Y$. Hence, $T(f)=g$ as wanted.
\end{proof}

As a consequence, we obtain the following result concerning the
composition of infinitesimally Lipschitz functions.
\begin{prop}\label{hLip}
Let $(X,d_X)$ and $(Y,d_Y)$ be locally radially quasi-convex metric
spaces and let $h:X\rightarrow Y$. Suppose that $f\circ h\in
D^{\infty}(X)$ for each $f\in D^{\infty}(Y)$. Then $h\in D(X,Y)$.
\end{prop}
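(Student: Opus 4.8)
The plan is to show that the composition hypothesis forces $h$ itself to have uniformly bounded infinitesimal Lipschitz constant. First I would observe that the map $T\colon D^{\infty}(Y)\to D^{\infty}(X)$ defined by $T(f)=f\circ h$ is a unital algebra homomorphism: it clearly sends the constant function $1$ to $1$, it is linear, and it is multiplicative since $(f\cdot g)\circ h=(f\circ h)\cdot(g\circ h)$. By Lemma \ref{grafcer}, applied with the roles of $X$ and $Y$ interchanged, $T$ is continuous for the respective $D^{\infty}$-norms; that is, there exists a constant $C>0$ with $\|f\circ h\|_{D^{\infty}}\le C\|f\|_{D^{\infty}}$ for every $f\in D^{\infty}(Y)$. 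In particular $\|\Lip(f\circ h)\|_{\infty}\le C\|f\|_{D^{\infty}}$.

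Next I would fix a non-isolated point $x\in X$, set $y=h(x)$, and estimate $\Lip h(x)$ by testing $T$ against a suitable distance-type function on $Y$. Since $Y$ is locally radially quasi-convex, choose a neighborhood $U^{y}$ and a constant $K_{y}>0$ as in Definition \ref{locradqua}, and pick $\rho>0$ with $B(y,\rho)\subset U^{y}$. The natural test function is $f=\min\{d_{Y}(\cdot,y),\rho\}$, which lies in $\LIP^{\infty}(Y)\subset D^{\infty}(Y)$ with $\|f\|_{\infty}\le\rho$ and $\Lip f\le 1$ everywhere, so $\|f\|_{D^{\infty}}\le\max\{\rho,1\}$. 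Then $f\circ h\in D^{\infty}(X)$ and, for points $z$ near $x$ with $h(z)\in B(y,\rho)$, one has $(f\circ h)(z)=d_{Y}(h(z),h(x))$ while $(f\circ h)(x)=0$; by continuity of $h$ (which holds because $h\in D(X,Y)$ once we know $\|\Lip h\|_\infty<\infty$ — or directly, we only need $h(z)\to y$, which we will get a posteriori, so it is cleaner to argue with the $\limsup$ restricted to $z$ with $h(z)\in B(y,\rho)$, noting that $f\circ h$ already encodes the truncation). Hence
$$
\Lip h(x)=\limsup_{\substack{z\to x\\ z\ne x}}\frac{d_{Y}(h(z),h(x))}{d_{X}(x,z)}
=\limsup_{\substack{z\to x\\ z\ne x}}\frac{(f\circ h)(z)-(f\circ h)(x)}{d_{X}(x,z)}
\le\Lip(f\circ h)(x)\le\|\Lip(f\circ h)\|_{\infty}\le C\max\{\rho,1\}.
$$
The right-hand side depends on the local data at $y=h(x)$ through $\rho$, which is awkward for a uniform bound; to get a genuinely uniform estimate I would instead run this argument with a fixed truncation level, noting that the bound $\Lip h(x)\le\Lip(f\circ h)(x)$ already does not require $f$ to see beyond an infinitesimal neighborhood, so any $\rho>0$ works and we may simply take $\rho=1$, giving $\Lip h(x)\le C\max\{1,1\}=C$ independently of $x$.

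Therefore $\|\Lip h\|_{\infty}\le C<\infty$, which is exactly the statement that $h\in D(X,Y)$. The main obstacle I anticipate is the bookkeeping around the truncation: one must make sure that the test function $f=\min\{d_{Y}(\cdot,y),1\}$ genuinely satisfies $f\in D^{\infty}(Y)$ with the claimed norm bound (it does, being $1$-Lipschitz and bounded by $1$), and that the $\limsup$ computation is legitimate even before continuity of $h$ is known — which is handled automatically because for $z$ close enough to $x$ either $h(z)\in B(y,1)$, in which case the identity $(f\circ h)(z)=d_Y(h(z),h(x))$ holds, or $h(z)\notin B(y,1)$, in which case $d_Y(h(z),h(x))\ge 1$ while $d_X(x,z)$ is small, so such $z$ contribute arbitrarily large difference quotients to the $\limsup$ of $f\circ h$ as well, forcing $\Lip(f\circ h)(x)=\infty$ unless those $z$ are excluded — but $f\circ h\in D^{\infty}(X)$ rules that out, so for $z$ sufficiently near $x$ we do have $h(z)\in B(y,1)$ and the identity applies. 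Once this point is pinned down, the rest is the routine chain of inequalities above.
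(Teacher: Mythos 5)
Your proposal is correct and follows essentially the same route as the paper: the same appeal to Lemma \ref{grafcer} for continuity of $T(f)=f\circ h$, the same test function $f=\min\{d_Y(\cdot,h(x)),1\}$, and the same chain $\Lip h(x)=\Lip(f\circ h)(x)\le \|f\circ h\|_{D^{\infty}}\le C\|f\|_{D^{\infty}}\le C$. The only (minor, and valid) deviation is that the paper first proves continuity of $h$ globally via preimages of closed sets, whereas you extract the needed local control of $h$ near $x$ directly from the fact that $f\circ h\in D^{\infty}(X)$ forbids the difference quotients from blowing up.
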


\begin{proof}
We begin by checking that $h$ is a continuous function, that is,
$h^{-1}(C)$ is closed in $X$ for each closed subset $C$ in $Y$. Let
$C$ be a closed subset of $Y$ and $x_0\in Y\backslash C$. Take
$f=\inf\{d(\cdot,C),d(x_0,C)\}\in D^{\infty}(Y)$ which satisfies
that $f(x_0)=1$ and $f(y)=0$ for each $y\in C$. Let us observe that
$f(x)=0$ if and only if $x\in C$ and so $f^{-1}(f(C))=C$. Thus,
since $f\circ h$ is continuous and $f(C)=0$ is closed in $\R$,
$h^{-1}(C)=h^{-1}(f^{-1}(f(C)))=(f\circ h)^{-1}(f(C))$ is closed in
$Y$.

\noindent Now, let $x_0\in X$ such that $h(x_0)$ is not an isolated
point. Note that if all points belonging to $h(X)$ are isolated we
have that $\Lip h(x)=0$ for each $x\in X$ and so $\|\Lip
h\|_{\infty}=0$, which implies that $h\in D(X,Y)$. Thus, we may
assume that there exists $x_0\in X$ such that $h(x_0)$ is not an
isolated point. Let $f_{x_0}=\min\{d_Y(\cdot,h(x_0)),1\}\in
D^{\infty}(Y)$, since it is a Lipschitz function. We have that
\begin{equation*}
\begin{split}
\Lip (f_{x_0}\circ h)(x_0)=&\limsup_{\substack{y\to x_0\\
y\neq x_0}} \frac{|f_{x_0}\circ h(y)-f_{x_0}\circ h(x_0)|}{d_X(x_0,y)}=\limsup_{\substack{y\to x_0\\
y\neq x_0}} \frac{|f_{x_0}\circ h(y)|}{d_X(x_0,y)}\\=&\limsup_{\substack{y\to x_0\\
y\neq x_0}}
\frac{|\min\{d_Y(h(y),h(x_0)),1\}|}{d_X(x_0,y)}\stackrel{(*)}{=}\Lip
h(x_0).
\end{split}
\end{equation*}
The equality $(*)$ holds because, as we have checked above, the map
$h$ is continuous. Thus, we obtain that
\begin{equation*}
\begin{split}
\Lip h(x_0)=&\Lip (f_{x_0}\circ h)(x_0)\leq \|\Lip (f_{x_0}\circ
h)\|_{\infty}\leq \|f_{x_0}\circ
h\|_{D^{\infty}(X)}\\\stackrel{(\heartsuit)}{\leq}&
K\|f_{x_0}\|_{D^{\infty}(Y)}\stackrel{(*)}{=}K\|\Lip
(f_{x_0})\|_{\infty}\quad (\dag)
\end{split}
\end{equation*}
 for a certain constant $K>0$ depending only on $g$. For
$(\heartsuit)$ we have used that, by Lemma \ref{grafcer}, the
homomorphism $T:D^{\infty}(Y)\rightarrow D^{\infty}(X)$,
$g\rightarrow g\circ h$ is continuous. The inequality $(*)$ holds
true because $\|f_x\|_{D^{\infty}(Y)}=\max\{\|f_{x_0}\|_{\infty},
\|\Lip (f_{x_0})\|_{\infty}\}$, $\|f_{x_0}\|_{\infty}\leq 1$ and
$\|\Lip (f_{x_0})\|_{\infty}=1$. It remains to check that, $\|\Lip
(f_{x_0})\|_{\infty}=1$. Indeed,
$$
\Lip f_{x_0}(z)=\limsup_{\substack{z'\to z\\
z'\neq z}} \frac{|f_{x_0}(z')-f_{x_0}(z)|}{d_Y(z',z)}.
$$
We have to distinguish three different cases:
\begin{itemize}
\item[(i)] If $d_Y(z,h(x_0))>1$, there exists a neighborhood $V_z$
where\\ $d_Y(z',h(x_0))>1$ for each $z'\in V_z$ and
${f_{x_0}}_{|_{V_z}}=1$. Thus, $\Lip f_{x_0}(z)=0$.
\item[(ii)] If $d_Y(z,h(x_0))<1$, there exists a neighborhood
$V_z$ where \\$d_Y(z',h(x_0))>1$ for each $z'\in V_z$ and so
$$
\Lip f_{x_0}(z)=\limsup_{\substack{z'\to z\\
z'\neq z}}\frac{|d_Y(z',h(x_0))-d_Y(z,h(x_0))|}{d_Y(z',z)}\leq \limsup_{\substack{z'\to z\\
z'\neq z}}\frac{d_Y(z',z)}{d_Y(z',z)}=1.
$$
\item[(iii)] If $d_Y(z,h(x_0))=1$, then
$$
\Lip f_{x_0}(z)=\limsup_{\substack{z'\to z\\
z'\neq z}}\frac{1-\min\{d_Y(z',h(x_0)),1\}}{d_Y(z',z)}.
$$
If $d_Y(z',h(x_0))\geq 1$, then $1-\min\{d_Y(z',h(x_0)),1\}=0$. On
the other hand, if $d_Y(z',h(x_0))< 1$, then
$$1-\min\{d_Y(z',h(x_0)),1\}=d_Y(z,h(x_0))-d_Y(z',h(x_0))\leq
d_Y(z,z').$$ Hence, we deduce that $\Lip f_{x_0}(z)\leq 1$.
\end{itemize}
On the other hand, since $h(x_0)$ is not an isolated point, $\Lip
f_{x_0}(z)=1$ and so $\|\Lip f\|_{\infty}=1$ because we have seen
that $\|\Lip f\|_{\infty}\leq 1$. Then, upon taking the supremum
over $X$ in both sides of the inequality $(\dag)$ we conclude that
$\|\Lip h\|_{\infty}\leq K$, as wanted.
\end{proof}

\begin{remark}\em
If we look at Theorem $3.12$ in \cite{GaJa2}, where  an analogous
result to Proposition \ref{hLip} for Lipschitz functions is
obtained, we can see that the  argument there is based on the fact
that the distance can be expressed in terms of Lipschitz functions.
In our case, we cannot use the same strategy since we do not know
how to compare the values of an infinitesimally Lipschitz function
at two arbitrary points of the space.
\end{remark}

Finally, we need the following useful Lemma, which shows that the
points in $X$ can be topologically distinguished into ${\mathcal
H}(D^{\infty}(X))$. It is essentially known (see for instance
\cite{GaJa1}) but we give a proof for completeness.

\begin{lem}\label{superlema}
Let $(X,d)$ be a complete metric space and let $\varphi\in{\mathcal
H}(D^{\infty}(Y))$. Then $\varphi$ has a countable neighborhood
basis in ${\mathcal H}(D^{\infty}(X))$ if, and only if, $\varphi\in
X$.
\end{lem}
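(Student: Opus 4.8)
The plan is to prove the two implications separately; throughout I write $\mathcal{H}:=\mathcal{H}(D^\infty(X))$, which (as set up just before the lemma) is a compact Hausdorff compactification of $X$ in which $X$ is dense and every $f\in D^\infty(X)$ has a continuous extension $\widehat f$ with $\widehat f(\varphi)=\varphi(f)$. Recall also that each $g\in D^\infty(X)$ is continuous on $(X,d)$ by Lemma \ref{dcont}. Since $\mathcal{H}$ is compact Hausdorff, having a countable neighbourhood basis is the same as being a $G_\delta$ point.

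For the implication $\varphi\in X\Rightarrow\varphi$ has a countable basis, write $\varphi=\delta_x$ and take $f=\min\{d(\cdot,x),1\}\in\LIP^\infty(X)\subseteq D^\infty(X)$. The claim is that $\widehat f^{-1}(0)=\{\delta_x\}$. One inclusion is clear since $\widehat f(\delta_x)=f(x)=0$. Conversely, if $\widehat f(\eta)=0$, pick by density of $X$ a net $x_\alpha\in X$ with $x_\alpha\to\eta$ in $\mathcal{H}$; then $f(x_\alpha)=\widehat f(x_\alpha)\to 0$, so eventually $f(x_\alpha)=d(x_\alpha,x)$ and $d(x_\alpha,x)\to 0$, i.e. $x_\alpha\to x$ in $(X,d)$. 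For any $g\in D^\infty(X)$ we then get $g(x_\alpha)\to g(x)$ by continuity, while $g(x_\alpha)=\widehat g(x_\alpha)\to\widehat g(\eta)$; hence $\widehat g(\eta)=g(x)$ for every $g$, so $\eta=\delta_x$. Thus $\{\delta_x\}=\bigcap_{n\ge 1}\{\eta:\widehat f(\eta)<1/n\}$, and these open sets form a neighbourhood basis: if $U\ni\delta_x$ is open, then $\mathcal{H}\setminus U$ is compact and covered by the increasing closed sets $\{\widehat f\ge 1/n\}$, hence contained in some $\{\widehat f\ge 1/N\}$, so $\{\widehat f<1/N\}\subseteq U$.

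The substantial direction is the converse, and this is where completeness of $X$ is used. Suppose $\varphi$ has a countable neighbourhood basis, which we may take to be open and decreasing, $\{V_n\}$. By density pick $x_n\in V_n\cap X$; then $x_n\to\varphi$ in $\mathcal{H}$. Argue by contradiction: assume $\varphi\notin X$. Then $(x_n)$ has no subsequence converging in $(X,d)$, for such a limit $x$ would give $\delta_x=\varphi$ by Hausdorffness. Discarding repetitions, $S=\{x_n\}$ is an infinite closed discrete subset of the complete space $X$. The crucial combinatorial step is that $S$ must then contain an infinite $\varepsilon_0$-separated subset for some $\varepsilon_0>0$: otherwise, for every $\varepsilon>0$ a maximal $\varepsilon$-separated subset of $S$ would be finite, which by maximality exhibits $S$ as a finite union of $\varepsilon$-balls, so $S$ is totally bounded, $\overline S$ is compact, and $S$ cannot be infinite and discrete — a contradiction. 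Relabel this separated subset as $(y_k)$; it still converges to $\varphi$.

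Finally, use the separation to construct a "bad" function. Put $r=\varepsilon_0/3$, so the balls $B(y_k,r)$ are pairwise disjoint; then $f:=\sum_{i\ge 0}\max\{r-d(\cdot,y_{2i+1}),0\}$ is well defined, bounded by $r$, and near any point it coincides with one of the $1$-Lipschitz functions $\max\{r-d(\cdot,y_{2i+1}),0\}$ or with $0$ (the union of the closed balls $\overline{B(y_{2i+1},r)}$ is closed, since a convergent sequence drawn from distinct balls would force infinitely many $\varepsilon_0$-separated centres into a ball of radius $<\varepsilon_0/2$). Hence $\|\Lip f\|_\infty\le 1$ and $f\in D^\infty(X)$. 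But $f(y_{2i})=0$ while $f(y_{2i+1})=r$, and continuity of $\widehat f$ together with $y_k\to\varphi$ forces $\widehat f(\varphi)=\lim_k f(y_k)$; evaluating along even and odd indices yields $0=\widehat f(\varphi)=r>0$, a contradiction. Therefore $\varphi\in X$. I expect the main obstacle to be exactly the extraction of the uniformly separated subsequence via the totally-bounded/compactness argument: a closed discrete set need not be uniformly discrete, and without uniform separation one cannot keep $\|\Lip f\|_\infty$ bounded for the bump function, so this step is what makes completeness indispensable.
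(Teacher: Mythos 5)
Your proof is correct and follows essentially the same route as the paper's: for the hard direction you use completeness to extract a uniformly $\varepsilon_0$-separated subsequence from a sequence in $X$ converging to $\varphi$, and then contradict the continuity of $\widehat{f}$ at $\varphi$ with a function in $D^{\infty}(X)$ that separates the even- and odd-indexed terms. The only cosmetic difference is that the paper invokes the uniform separation property of $D^{\infty}(X)$ directly (taking $f=\inf\{d(\cdot,A),\varepsilon\}$, which is $1$-Lipschitz, with $A$ the set of even-indexed terms), whereas you construct a sum of disjoint bumps by hand; both work, and your verification of the separated-subsequence extraction via total boundedness is just the spelled-out version of the paper's ``no Cauchy subsequence'' step.
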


\begin{proof}
Suppose first that $\varphi\in{\mathcal H}(D^{\infty}(Y))\backslash
X$ has a countable neighborhood basis. Since $X$ is dense in
${\mathcal H}(D^{\infty}(X))$, there exists a sequence $(x_n)$ in
$X$ converging to $\varphi$.
 The completeness of $X$ implies that $(x_n)$ has no $d-$Cauchy sequence, and therefore there exist
 $\varepsilon>0$ and a subsequence $(x_{n_k})$ such that
 $d(x_{n_k},x_{n_j})\geq\varepsilon$ for $k\neq j$. Now, the sets $A=\{x_{n_k}: k\text{ even
 }\}$ and $B=\{x_{n_k}: k\text{ odd }\}$ satisfy
 $d(A,B)\geq\varepsilon$, and since $D^{\infty}(X)$ is uniformly separating, there is a function $f\in
 D^{\infty}(X)$ with $\overline{f(A)}\cap\overline{f(B)}=\emptyset$.
 But this is a contradiction since $f$ extends continuously to ${\mathcal
 H}(D^{\infty}(X))$ and $\varphi$ is in the closure of both $A$ and
 $B$.

 Conversely, if $\varphi\in X$, consider $B_n$ the open ball in $X$
 with center $\varphi$ and radius $1/n$. Then the family $\{\overline{B_n}\}_n$ of the closures of $B_n$ in ${\mathcal
 H}(D^{\infty}(X))$ is easily seen to be a countable
 neighborhood basis as required.
 \end{proof}

Now, we are in a position to show that the algebra structure of
$D^{\infty}(X)$ determines the infinitesimal Lipschitz structure of
a complete locally radially quasi-convex metric space. We say that
two metric spaces $X$ and $Y$ are \em infinitesimally Lipschitz
homeomorphic \em if there exists a bijection $h:X\rightarrow Y$ such
that $h\in D(X,Y)$ and $h^{-1}\in D(Y,X)$.
\begin{thm}\textbf{\em(Banach-Stone type)\em}\label{banachstone}
Let $(X,d_X)$ and $(Y,d_Y)$ be complete locally radially
quasi-convex metric spaces. The following are equivalent:
\begin{itemize}
\item[(a)]$X$ is  infinitesimally Lipschitz homeomorphic to $Y$.
\item[(b)]$D^{\infty}(X)$ is isomorphic to $D^{\infty}(Y)$ as unital algebras.
\item[(c)]$D^{\infty}(X)$ is isomorphic to $D^{\infty}(Y)$ as unital vector lattices.
\end{itemize}
\end{thm}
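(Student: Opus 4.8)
The plan is to prove the cycle of implications $(a)\Rightarrow(b)\Rightarrow(c)\Rightarrow(a)$, with the bulk of the work in $(c)\Rightarrow(a)$. For $(a)\Rightarrow(b)$: if $h:X\to Y$ is an infinitesimally Lipschitz homeomorphism, then composition $f\mapsto f\circ h$ maps $D^\infty(Y)$ into $D^\infty(X)$ (and the inverse composition $g\mapsto g\circ h^{-1}$ maps $D^\infty(X)$ into $D^\infty(Y)$) because $\Lip(f\circ h)(x)\le \Lip f(h(x))\,\Lip h(x)\le \|\Lip f\|_\infty\|\Lip h\|_\infty$ by the chain-rule estimate; clearly this map is a unital algebra isomorphism. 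For $(b)\Rightarrow(c)$: any unital algebra isomorphism is in particular a unital vector lattice isomorphism, since the lattice operations $f\vee g$, $f\wedge g$ are determined by the order, and the order on these function algebras is determined algebraically (e.g. $f\ge 0$ iff $f=g^2$ for some $g$ in the algebra, using that $\sqrt{f}\in D^\infty(X)$ when $f\ge\delta>0$, plus a limiting/translation argument as in the positivity discussion preceding Lemma \ref{homcontinuo}).

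The heart of the matter is $(c)\Rightarrow(a)$. Given a unital vector lattice isomorphism $T:D^\infty(X)\to D^\infty(Y)$, the strategy follows \cite{GaJa2}: reconstruct a bijection $h:Y\to X$ from $T$ via the induced map on the ``spectrum.'' First I would observe that a unital vector lattice isomorphism is automatically a unital algebra isomorphism in this setting (or at least carries enough structure): more robustly, I would work with $\mathcal{H}(D^\infty(\cdot))$ and note that $T$ induces, by duality $\varphi\mapsto\varphi\circ T$, a homeomorphism $T^*:\mathcal{H}(D^\infty(Y))\to\mathcal{H}(D^\infty(X))$ between the two compactifications (using that evaluation-type homomorphisms correspond under the lattice structure to the lattice homomorphisms onto $\R$, and that both spaces are compact Hausdorff). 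Then I would invoke Lemma \ref{superlema}: since $X$ and $Y$ are complete, a point of $\mathcal{H}(D^\infty(X))$ lies in $X$ if and only if it has a countable neighborhood basis, and this property is preserved by the homeomorphism $T^*$. Hence $T^*$ restricts to a homeomorphism $h:Y\to X$.

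It then remains to show $h\in D(Y,X)$ and $h^{-1}\in D(X,Y)$. This is exactly where Proposition \ref{hLip} is applied: for each $f\in D^\infty(X)$, the function $f\circ h$ equals (up to the identification of functions with their restrictions to the dense subspace of point evaluations) the image $T^{-1}(f)$ or $T(f)$ composed appropriately — concretely, $\widehat{f\circ h}=\widehat{f}\circ T^*$ on $Y$, which shows $f\circ h\in D^\infty(Y)$; by Proposition \ref{hLip} (applicable since $X,Y$ are locally radially quasi-convex) we conclude $h\in D(Y,X)$, and symmetrically $h^{-1}\in D(X,Y)$. Thus $X$ and $Y$ are infinitesimally Lipschitz homeomorphic. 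The main obstacle I anticipate is the step that $T$ (a priori only a vector lattice isomorphism, not an algebra isomorphism) induces a well-defined homeomorphism of the spectra $\mathcal{H}(D^\infty(\cdot))$: one must verify that the evaluation homomorphisms are intrinsically characterized by the unital lattice structure (e.g. as the $\R$-valued unital lattice homomorphisms, or as extreme points of a suitable state space) so that $T^*$ maps them to evaluation homomorphisms; once that identification is secure, the rest is a matter of assembling Lemmas \ref{homcontinuo}, \ref{superlema} and Proposition \ref{hLip}.
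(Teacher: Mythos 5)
Your proposal is correct in substance and its core coincides with the paper's: the bijection is reconstructed via the spectrum ${\mathcal H}(D^{\infty}(\cdot))$ of unital algebra homomorphisms with the pointwise topology, the points of $X$ are recognized inside the compactification by Lemma \ref{superlema} (countable neighborhood bases, using completeness), and the regularity $h\in D(Y,X)$, $h^{-1}\in D(X,Y)$ is obtained from Proposition \ref{hLip}; your identity $\widehat{f}\circ T^{*}=\widehat{T(f)}$ on the dense copy of the underlying space is exactly how the paper proves continuity of the induced map. The difference is organizational and concerns the lattice condition. The paper proves $(a)\Leftrightarrow(b)$ and then $(b)\Leftrightarrow(c)$ separately: it checks that $D^{\infty}(X)$ is closed under bounded inversion (if $f\geq 1$ then $1/f\in D^{\infty}(X)$) and invokes Lemma $2.3$ of \cite{GaJa2}, which settles the passage between unital algebra and unital vector lattice isomorphisms for such uniformly separating function lattices; in this way the spectrum argument is only ever run in the algebra category. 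You instead close a cycle $(a)\Rightarrow(b)\Rightarrow(c)\Rightarrow(a)$, giving a self-contained algebraic characterization of the order for $(b)\Rightarrow(c)$ (via squares of $f+\varepsilon$, which works since $\sqrt{g}\in D^{\infty}(X)$ when $g\geq\varepsilon>0$), and then attacking $(c)\Rightarrow(a)$ directly from the lattice structure. The obstacle you correctly flag there --- that evaluation homomorphisms must be intrinsically characterized by the unital lattice structure so that $T^{*}$ is well defined on the spectrum --- is precisely the content the paper outsources to \cite{GaJa2}; the bounded-inversion reduction buys you that step essentially for free and avoids redoing the compactification argument in the lattice category, so if you keep your cycle you should either prove that characterization explicitly or insert the reduction from $(c)$ to $(b)$ as the paper does.
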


\begin{proof}
$(a)\Longrightarrow(b)$  If $h:X\rightarrow Y$ is an infinitesimally
Lipschitz homeomorphism, then it is easy to check the map
$T:D^{\infty}(Y)\rightarrow D^{\infty}(X)$, $f\mapsto T(f)=f\circ
h$, is an isomorphism of unital algebras.

$(b)\Longrightarrow(a)$ Let $T:D^{\infty}(X)\rightarrow
D^{\infty}(Y)$ be an isomorphism of unital algebras. We define
$h:{\mathcal H}(D^{\infty}(Y))\rightarrow{\mathcal
H}(D^{\infty}(X))$, $\varphi\mapsto h(\varphi)=\varphi\circ T$. Let
us see first that $h$ is an homeomorphism. To reach that aim, it is
enough to prove that $h$ is bijective, closed and continuous. Since
$T$ is an isomorphism, $h^{-1}(\psi)=\psi\circ T^{-1}$ exists for
every $\psi\in {\mathcal H}(D^{\infty}(X))$, and so $h$ is
bijective. In addition, once we check that $h$ is continuous we will
also have that $h$ is closed because ${\mathcal H}(D^{\infty}(Y))$
is compact and ${\mathcal H}(D^{\infty}(X))$ is a Hausdorff space.
Now consider the following diagram:
$$
\xymatrix{Y\ar[d]_{T(f)}\ar@{^{(}->}[r]& \ar[r]^{h}\ar[d]_{\widehat{T(f)}}\ar@{-->}[rd]_{\widehat{f}\circ h}{\mathcal H}(D^{\infty}(Y))&\ar[d]^{\widehat{f}}{\mathcal H}(D^{\infty}(X))\ar@{<-^{)}}[r]&X\ar[d]^f\\
\R&\R&\R&\R}
$$

Here, $\widehat{f}$ (respectively $\widehat{T(f)}$) denotes the
continuous extension of $f$ (respectively $T(f)$) to ${\mathcal
H}(D^{\infty}(X))$. Thus, $h$ is continuous if and only if
$\widehat{f}\circ h$ is continuous for all $f\in D^{\infty}(X)$.
Hence, it is enough to prove that $\widehat{f}\circ
h=\widehat{T(f)}$. Since $X$ is dense in ${\mathcal
H}(D^{\infty}(X))$, it is suffices to check that
$$
\widehat{T(f)}(\delta_x)=\widehat{f}\circ h(\delta_x),
$$
where $\delta_x$ denotes the evaluation homomorphism for each $x\in
X$. It is clear that,
$$
\widehat{f}\circ h(\delta_x)=(h\circ\delta_x)(f)=(\delta_x\circ
T)(f)=\delta_{T(f)(x)}=\delta_x(Tf)=\widehat{T(f)}(\delta_x),
$$
and so $h$ is continuous.

By Lemma \ref{superlema} we have that a point $\varphi\in{\mathcal
H}(D^{\infty}(X))$ has a countable neighborhood basis in ${\mathcal
H}(D^{\infty}(X))$ if and only if it corresponds to a point of $X$.
Since the same holds for $Y$ and ${\mathcal H}(D^{\infty}(Y))$ we
conclude that $h(Y)=X$ and by Proposition \ref{hLip} we have that
$h|_{Y}\in D(Y,X)$. Analogously, $h^{-1}|_{X}\in D(X,Y)$ and so $X$
and $Y$ are infinitesimally Lipschitz homeomorphic.

To prove $(b)\iff(c)$ We use that $D^{\infty}(X)$ is \em closed
under bounded inversion \em which means that if $f\in D^{\infty}(X)$
and $f\geq 1$, then $1/f\in D^{\infty}(X)$. Indeed, if $f\in
D^{\infty}(X)$ and $f\geq 1$, given $\varepsilon>0$ there exists
$r>0$ such that
$$
\frac{|f(x)-f(y)|}{d(x,y)}\leq\sup_{\substack{d(x,y)\leq r\\
y\neq x}}\frac{|f(x)-f(y)|}{d(x,y)}\leq M+\varepsilon\qquad\forall
y\in B(x,r)\quad(\star).
$$
Thus, given $x\in X$,
$$
\Big|\frac{1}{f(y)}-\frac{1}{f(x)}\Big|=\frac{|f(x)-f(y)|}{|f(x)f(y)|}\stackrel{(*)}{\leq}d(x,y)(M+\varepsilon)\qquad\forall
y\in B(x,r),
$$
where inequality $(*)$ is obtained after applying $(\star)$ and the
fact that $|f(x)f(y)|\geq 1$. Thus, the conclusion follows from
Lemma $2.3$ in \cite{GaJa2}.
\end{proof}

\begin{cor}\label{banachstone2}
Let $(X,d_X)$ and $(Y,d_Y)$ be complete locally radially
quasi-convex metric spaces. The following assertions are equivalent:
\begin{itemize}
\item[(a)] $X$ is  infinitesimally Lipschitz homeomorphic to $Y$.
\item[(b)] $D(X)$ is isomorphic to $D(Y)$ as unital vector lattices.
\end{itemize}
\end{cor}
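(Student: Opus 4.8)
The strategy is to deduce this from Theorem \ref{banachstone} by recognizing $D^{\infty}$ as a piece of $D$ that is visible to the unital vector lattice structure alone. The key observation is that, writing $\mathbf 1$ for the constant function $1$ (which lies in $D(X)$ since $\Lip\mathbf 1\equiv 0$), one has $f\in D^{\infty}(X)$ if and only if $|f|\le n\mathbf 1$ for some $n\in\N$; in other words, $D^{\infty}(X)$ is precisely the principal order ideal of $D(X)$ generated by the lattice unit. Any unital vector lattice isomorphism therefore carries this ideal onto the corresponding one, which will let us reduce to the bounded case.

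For $(a)\Rightarrow(b)$, given an infinitesimally Lipschitz homeomorphism $h:X\to Y$, I claim that $T:D(X)\to D(Y)$, $g\mapsto g\circ h^{-1}$, is an isomorphism of unital vector lattices with inverse $g\mapsto g\circ h$. The only point that is not purely formal is that composition stays inside $D$. For $f\in D(Y)$ and a non-isolated $x_0\in X$, I would split the $\limsup$ defining $\Lip(f\circ h)(x_0)$ according to whether $h(y)=h(x_0)$ (where the difference quotient vanishes, so no division by zero occurs) or $h(y)\neq h(x_0)$; using that $h$ is continuous (by the analogue of Lemma \ref{dcont} for maps into metric spaces) together with $\limsup(a_nb_n)\le(\limsup a_n)(\limsup b_n)$ for nonnegative sequences, one obtains the chain-rule estimate $\Lip(f\circ h)(x_0)\le\Lip f(h(x_0))\cdot\Lip h(x_0)\le\|\Lip f\|_{\infty}\,\|\Lip h\|_{\infty}<\infty$, so $f\circ h\in D(X)$; the same applies to $h^{-1}$. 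Linearity, $T\mathbf 1_X=\mathbf 1_Y$, and $T(f\vee g)=Tf\vee Tg$ are then immediate.

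For $(b)\Rightarrow(a)$, let $T:D(X)\to D(Y)$ be an isomorphism of unital vector lattices. Being a linear lattice isomorphism, $T$ preserves absolute values, $T(|f|)=|Tf|$, and preserves the order in both directions; by hypothesis $T\mathbf 1_X=\mathbf 1_Y$. Hence $|f|\le n\mathbf 1_X$ holds if and only if $|Tf|\le n\mathbf 1_Y$ holds, and by the observation in the first paragraph $T$ restricts to a bijection $T|_{D^{\infty}(X)}:D^{\infty}(X)\to D^{\infty}(Y)$, which is still an isomorphism of unital vector lattices. Applying the implication $(c)\Rightarrow(a)$ of Theorem \ref{banachstone} gives that $X$ and $Y$ are infinitesimally Lipschitz homeomorphic.

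I expect the only genuine work to be the chain-rule estimate in $(a)\Rightarrow(b)$; everything else is the soft identification of $D^{\infty}$ as the ideal generated by the unit, plus the citation of Theorem \ref{banachstone}. The one spot needing care is the $\limsup$ bookkeeping at points where $h$ is locally constant, which must be handled so that the argument neither divides by zero nor produces a spurious $0\cdot\infty$.
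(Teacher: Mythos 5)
Your proposal is correct and follows essentially the same route as the paper: the forward implication is the composition operator $f\mapsto f\circ h$ (the paper leaves the membership $f\circ h\in D$ as ``clear'', which your chain-rule estimate justifies), and the reverse implication is exactly the paper's observation that a unital vector lattice isomorphism takes bounded functions to bounded functions (via $|f|\le M\mathbf 1\Rightarrow|Tf|=T(|f|)\le M\mathbf 1$), so it restricts to $D^{\infty}$ and Theorem \ref{banachstone}(c)$\Rightarrow$(a) applies. Your phrasing of $D^{\infty}(X)$ as the principal order ideal generated by the unit is just a tidier packaging of the same argument.
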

\begin{proof}
$(a)\Longrightarrow(b)$ If $h:X\rightarrow Y$ is an infinitesimally
Lipschitz homeomorphism, then it is clear that the map
$T:D(Y)\rightarrow D(X)$, $f\mapsto T(f)=f\circ h$, is an
isomorphism of unital vector lattices.

\vspace{2mm} $(b)\Longrightarrow(a)$ It follows from Theorem
\ref{banachstone}, since each homomorphism of unital vector lattices
$T:D(Y)\rightarrow D(X)$ takes bounded functions to bounded
functions. Indeed, if $|f|\leq M$ then $ |T(f)|=T(|f|)\leq T(M)=M.$
\end{proof}

Next we deal with  what we call \em infinitesimal isometries \em
between metric spaces, related to infinitesimally Lipschitz
functions.
\begin{defn}\em
Let $(X,d_X)$ and $(Y,d_Y)$ be metric spaces. We say that $X$ and
$Y$ are \em infinitesimally  isometric \em if there exists a
bijection $h:X\longrightarrow Y$ such that $\|\Lip
h\|_{\infty}=\|\Lip h^{-1}\|_{\infty}=1$.
\end{defn}

\begin{remark}\em
We deduce from the proofs of Proposition \ref{hLip} and Theorem
\ref{banachstone} that two complete locally radially quasi-convex
metric spaces $X$ and $Y$ are infinitesimally  isometric if, and
only if, there exists an algebra isomorphism $T:D^{\infty}(Y)\to
D^{\infty}(X)$ which is an isometry for the
$\|\cdot\|_{D^{\infty}}$-norms (that is, $\|T\|=\|T^{-1}\|=1$).
\end{remark}

It is clear that if two metric spaces are locally isometric, then
they are infini\-te\-simally  isometric. The converse is not true,
as we can see  throughout the following example.

\begin{example}\em
Let $(X,d)$ be the metric space introduced in Example \ref{cuspide}
and let $(Y,d')$ be the metric space defined in the following way.
Consider the interval $Y=[-1,1]$ and let us define a metric on it as
follows:
$$
d'(t,s)=\left\{\begin {array}{ll} d((t^3,t^2),(s^3,s^2))&\text{ if } t,s\in[-1,0],\\[10pt]
d((t^3,t^2),(s^3,s^2))&\text{ if }
t,s\in[0,1],\\[10pt]d((t^3,t^2),(0,0))+d((0,0),(s^3,s^2))&\text{ if }
t\in[-1,0], s\in[0,1].
\end{array}
\right.
$$
It is easy to see that $d'$ defines a metric. We define
$$
h:X\to Y,\ (t^3,t^2)\to t.
$$
Let us observe that $\|\Lip h\|_{\infty}=\|\Lip h^{-1}\|_{\infty}=1$
and so $X$ and $Y$ are infinitesimally  isometric. However, at the
origin $(0,0)$, for each $r>0$ we have that
$$
d(z,y)\neq d'(h(z),h(y))\quad\forall z,y\in B((0,0),r).
$$
Thus, $h$ is an infinitesimal  isometry, but not a local isometry.
In fact, it can be checked that there is no local isometry
$f:X\longrightarrow Y$.
\end{example}

\begin{steps}{Non complete case.} If $X$ is a metric space and
$\widetilde{X}$ denotes its completion, then both metric spaces have
the same uniformly continuous functions. Therefore,
$\LIP(X)=\LIP(\widetilde{X})$, and completeness of spaces cannot be
avoided in the Lipschitzian case. We are interested in how
completeness assumption works for the $D$-case. It would be useful
to analyze if there exists a Banach-Stone theorem for not complete
metric spaces.
\end{steps}
\begin{example}\em
Let $(X,d)$ be the metric space given by
$$ X=\{(x,y)\in\R^2:y^3=x^2,
-1\leq x\leq 1\}=\{(t^3,t^2),-1\leq t\leq 1\},
$$
where $d$ is the restriction to $X$ of the Euclidean metric of
$\R^2$. Let $(Y,d')$ be the metric space given by
$Y=X\backslash\{0\}$ and $d'=d|_Y$. Observe that $(X,d)$ is the
completion of $(Y,d')$. The function
$$
h:Y\to \R,\ (x,y)\mapsto\left\{\begin{array}{cl}1&\text{ if }x< 0\\
0&\text{ if }x>0,\end{array}\right.
$$
belongs to $D(Y)$ but $h$ cannot be even continuously extended to
$X$. Thus, $D(Y)\neq D(X)$.
\end{example}

In the following example we construct a metric space $X$ such that
$D(X)=D(\widetilde{X})$, where $\widetilde{X}$ denotes the
completion of $X$, and so that $X$ is not homeomorphic to
$\widetilde{X}$. This fact illustrates that, a priori, one cannot
expect a conclusive result for the non complete case.

\begin{example}\em
Let $X$ be a metric space defined as follows:
$$
X=\{(t^3,t^2),-1\leq t\leq 1\}\cup\{(x,1)\in\R^2\,: 1\leq x<
2\}=A\cup B.
$$
\begin{center}\setlength{\unitlength}{0.6mm}
\begin{picture}(100,40)
\put(0,0){\vector(1,0){80}} \put(40,-5){\vector(0,1){40}}
\thicklines \qbezier(20,30)(40,20)(40,0)
\qbezier(60,30)(40,20)(40,0) \drawline(60,30)(80,30)
 \put(79.5,28.8){\small$\displaystyle)$}
\end{picture}
\end{center}

\vspace{2mm} Now, we consider the completion of $X$ :
$$
\widetilde{X}=\{(t^3,t^2),-1\leq t\leq 1\}\cup\{(x,1)\in\R^2:\,
1\leq x\leq 2\}=\widetilde{A}\cup\widetilde{B}.
$$
Let $f\in D(X)$. First of all, $D(B)=\LIP(B)$, since $B$ is a
quasi-length space, and so, by McShane's theorem (see \cite{hei}),
there exists $F\in \LIP(\widetilde{B})$ such that $F|_B=f$. Thus,
$$
G(x,y)=\left\{
\begin{array}{ll}
f(x,y)&\text{if $(x,y)\in A=\widetilde{A}$}\\[4pt]
F(x,y)&\text{if $(x,y)\in \widetilde{B}$},
\end{array}
\right.
$$
is a $D-$extension of $f$ to the completion $\widetilde{X}$. And so
$D(X)=D(\widetilde{X})$. However, $X$ is not homeomorphic to
$\widetilde{X}$ since $\widetilde{X}$ is compact but $X$ is not.
\end{example}

\section{Sobolev spaces on metric measure spaces}
Along this section, we always assume that $(X,d,\mu)$ is a metric
measure space, where $\mu$ is a \em Borel regular measure \em, that
is, $\mu$ is an outer measure on a metric space $(X, d)$ such that
all Borel sets are $\mu-$measurable and for each set $A\subset X$
there exists a Borel set $B$ such that $A\subset B$ and
$\mu(A)=\mu(B)$.

Our aim in this section is to compare the function spaces
$D^{\infty}(X)$ and $\LIP^{\infty}(X)$ with certain Sobolev spaces
on metric-measure spaces. There are several possible extensions of
the classical theory of Sobolev spaces to the setting of metric
spaces equipped with a Borel measure. Following \cite{Amb} and
\cite{haj} we record the definition of $M^{1,p}$ spaces:

\begin{steps}{Haj{\l}asz-Sobolev space.}
For $0<p\leq\infty$ the space $\widetilde{M}^{1,p}(X,d,\mu)$ is
defined as the set of all functions $f\in L^p(X)$ for which there
exists a function $0\leq g\in L^p(X)$ such that
$$
|f(x)-f(y)|\leq d(x,y)(g(x)+g(y))\qquad\mu-a.e. \quad(*).
$$
As usual, we get the space $M^{1,p}(X,d,\mu)$ after identifying any
two functions $u,v\in \widetilde{M}^{1,p}(X,d,\mu)$ such that $u=v$
almost everywhere with respect to $\mu$. The space
$M^{1,p}(X,d,\mu)$ is equipped with the norm
$$
\|f\|_{M^{1,p}}=\|f\|_{L^p}+\inf_{g}\|g\|_{L^p},
$$
where the infimum is taken over all functions $0\leq g\in L^p(X)$
that satisfy the requirement $(*)$.

In particular, if $p=\infty$ it can be shown that
$M^{1,\infty}(X,d,\mu)$ coincides with $\LIP^{\infty}(X)$ provided
that $\mu(B)>0$ for every open ball $B\subset X$ (see \cite{Amb})
and that $1/2\|\cdot\|_{\LIP^{\infty}}\leq
\|\cdot\|_{M^{1,\infty}}\leq \|\cdot\|_{\LIP^{\infty}}$. In this
case we obtain that $M^{1,\infty}(X)=\LIP^{\infty}(X)\subseteq
D^{\infty}(X)$.
\end{steps}

\begin{steps}{Newtonian space.}
Another interesting generalization of Sobolev spaces to general
metric spaces are the so-called Newtonian Spaces, introduced by
Shanmungalingam \cite{Sha,Sh1}. Its definition is based on the
notion of the upper gradient that we recall here for the sake of
completeness.

A non-negative Borel function $g$ on $X$ is said to be an \em upper
gradient \em for an extended real-valued function $f$ on X, if
$$
|f(\gamma(a))-f(\gamma(b))|\leq\int_{\gamma}g\qquad(*)
$$
for every rectifiable curve $\gamma:[a,b]\rightarrow X$. We see that
the upper gradient plays the role of a derivative in the formula
$(*)$ which is similar to the one related to the fundamental theorem
of calculus. The point is that using upper gradients we may have
many of the properties of ordinary Sobolev spaces even though we do
not have derivatives of our functions.

If $g$ is an upper gradient of $u$ and $\widetilde{g}=g$ almost
everywhere, then it may happen that $\widetilde{g}$ is no longer an
upper gradient for $u$. We do not want our upper gradients to be
sensitive to changes on small sets. To avoid this unpleasant
situation the notion of \em weak upper gradient \em is introduced as
follows. First we need a way to measure how large a family of curves
is. The most important point is if a family of curves is small
enough to be ignored. This kind of problem was first approached in
\cite{Fu}. In what follows let $\Upsilon\equiv\Upsilon(X)$ denote
the family of all nonconstant rectifiable curves in $X$. It may
happen $\Upsilon=\emptyset$, but we will be mainly concerned with
metric spaces for which the space $\Upsilon$ is large enough.

\begin{defn}\em (Modulus of a family of curves)
Let $\Gamma\subset\Upsilon$. For $1\leq p<\infty$ we define the
$p-$\em modulus of $\Gamma$ \em by
$$
\Mod_{p}(\Gamma)=\inf_{\rho}\int_{X}\rho^p\,d\mu,
$$
where the infimum is taken over all non-negative Borel functions
$\rho:X\rightarrow[0,\infty]$ such that $\int_{\gamma}\rho\geq 1$
for all $\gamma\in\Gamma$. If some property holds for all curves
$\gamma\in\Upsilon\backslash\Gamma$, such that $\Mod_p{\Gamma}=0$,
then we say that the property holds for \em $p-$a.e. curve\em.
\end{defn}

\begin{defn}\em
A non-negative Borel function $g$ on $X$ is a \em $p-$weak upper
gradient \em of an extended real-valued function $f$ on X, if
$$
|f(\gamma(a))-f(\gamma(b))|\leq\int_{\gamma}g
$$
for $p-$a.e. curve $\gamma\in\Upsilon$.
\end{defn}
Let $\widetilde{N}^{1,p}(X,d,\mu)$, where $1\leq p<\infty$, be the
class of all $L^p$ integrable Borel functions on $X$ for which there
exists a $p-$weak upper gradient in $L^p$. For
$f\in\widetilde{N}^{1,p}(X,d,\mu)$ we define
$$
\|u\|_{\widetilde{N}^{1,p}}=\|u\|_{L^p}+\inf_{g}\|g\|_{L^p},
$$
where the infimum is taken over all $p-$weak upper gradients $g$ of
$u$. Now, we define in $\widetilde{N}^{1,p}$  an equivalence
relation  by $u\sim v$ if and only if
$\|u-v\|_{\widetilde{N}^{1,p}}=0$. Then the space $N^{1,p}(X,d,\mu)$
is defined as the quotient $\widetilde{N}^{1,p}(X,d,\mu)/\sim$ and
it is equipped with the norm
$\|u\|_{N^{1,p}}=\|u\|_{\widetilde{N}^{1,p}}.$

Next, we consider the case $p=\infty$. We will introduce the
corresponding definition of $\infty-$modulus of a family of
rectifiable curves which will be an important ingredient for the
definition of the Sobolev space $N^{1,\infty}(X)$.

\begin{defn}\em
For $\Gamma\subset\Upsilon$, let $F(\Gamma)$ be the family of all
Borel measurable functions $\rho:X\rightarrow[0,\infty]$ such that
$$
\int_{\gamma}\rho\geq 1\,\,\text{ for all
 }\,\gamma\in\Gamma.
$$
We define the $\infty-$\em modulus of $\Gamma$ \em by
$$
\Mod_{\infty}(\Gamma)=\inf_{\rho\in
F(\Gamma)}\{\|\rho\|_{L^{\infty}}\}\in[0,\infty].
$$
If some property holds for all curves
$\gamma\in\Upsilon\backslash\Gamma$, where
$\Mod_{\infty}{\Gamma}=0$, then we say that the property holds for
\em $\infty-$a.e. curve\em.
\end{defn}
\begin{remark}\em
It can be easily checked that $\Mod_{\infty}$ is an outer measure as
it happens for $1\leq p<\infty$. See for example Theorem $5.2$ in
\cite{haj}.
\end{remark}

Next, we provide a characterization of path families whose
$\infty-$modulus is zero.

\begin{lem}\label{modinfzero}
Let $\Gamma\subset\Upsilon$. The following conditions are
equivalent:
\begin{itemize}
\item[(a)] $\Mod_{\infty}{\Gamma}=0$.
\item[(b)] There exists a Borel  function $0\leq\rho\in L^{\infty}(X)$ such that
$\int_{\gamma}\rho=+\infty$, for each $\gamma\in\Gamma$.
\item[(c)]There exists a Borel  function $0\leq\rho\in L^{\infty}(X)$ such that
$\int_{\gamma}\rho=+\infty$, for each $\gamma\in\Gamma$ and
$\|\rho\|_{L^{\infty}}=0$.
\end{itemize}
 \end{lem}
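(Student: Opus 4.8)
The plan is to prove the cyclic chain $(c)\Rightarrow(b)\Rightarrow(a)\Rightarrow(c)$, which yields all the pairwise equivalences. The implication $(c)\Rightarrow(b)$ is immediate, since $(c)$ is just $(b)$ together with the extra requirement that the function $\rho$ vanish $\mu$-almost everywhere. For $(b)\Rightarrow(a)$ I would argue by scaling: if $0\leq\rho\in L^{\infty}(X)$ is a Borel function with $\int_{\gamma}\rho=+\infty$ for every $\gamma\in\Gamma$, then for each $n\in\N$ the function $\rho/n$ is non-negative Borel and satisfies $\int_{\gamma}(\rho/n)=+\infty\geq 1$ for all $\gamma\in\Gamma$, so $\rho/n\in F(\Gamma)$; hence $\Mod_{\infty}(\Gamma)\leq\|\rho/n\|_{L^{\infty}}=\|\rho\|_{L^{\infty}}/n$, and letting $n\to\infty$ gives $\Mod_{\infty}(\Gamma)=0$.

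The substantial implication is $(a)\Rightarrow(c)$. Starting from $\Mod_{\infty}(\Gamma)=0$, for each $j\in\N$ I would pick $\rho_j\in F(\Gamma)$ with $\|\rho_j\|_{L^{\infty}}<2^{-j}$. Set $N_j=\{x\in X:\rho_j(x)>2^{-j}\}$; since $\|\rho_j\|_{L^{\infty}}<2^{-j}$ this is a $\mu$-null Borel set, so $N=\bigcup_{j\in\N}N_j$ is Borel with $\mu(N)=0$, and $\rho_j\leq 2^{-j}$ on $X\setminus N$ for every $j$. Writing $\sigma=\sum_{j\in\N}\rho_j$ (a non-negative Borel function with values in $[0,\infty]$), I would then take
$$
\rho=\chi_N\,\sigma.
$$
Then $\rho$ is a non-negative Borel function which vanishes on $X\setminus N$, hence $\rho=0$ $\mu$-a.e., that is $\|\rho\|_{L^{\infty}}=0$; it only remains to verify that $\int_{\gamma}\rho=+\infty$ for every $\gamma\in\Gamma$.

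To check this, fix $\gamma\in\Gamma$. By the monotone convergence theorem for the arc-length measure along $\gamma$ and the fact that each $\rho_j\in F(\Gamma)$, we have $\int_{\gamma}\sigma=\sum_{j}\int_{\gamma}\rho_j\geq\sum_{j}1=+\infty$. On the other hand $\sigma\leq\sum_{j}2^{-j}=1$ on $X\setminus N$, so $\int_{\gamma}\sigma\,\chi_{X\setminus N}\leq\ell(\gamma)<+\infty$ because $\gamma$ is rectifiable. Splitting $\int_{\gamma}\sigma=\int_{\gamma}\sigma\,\chi_{N}+\int_{\gamma}\sigma\,\chi_{X\setminus N}$ (all integrands being non-negative), we conclude $\int_{\gamma}\rho=\int_{\gamma}\sigma\,\chi_{N}=+\infty$, as required.

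I expect the main obstacle to be precisely obtaining a function $\rho$ that is not merely bounded but genuinely null $\mu$-a.e. The naive attempt $\rho=\sum_j\rho_j$ with $\|\rho_j\|_{L^{\infty}}\leq 2^{-j}$ only gives $\|\rho\|_{L^{\infty}}\leq 1$. The fix is to throw away the ``admissible part'' of each $\rho_j$ (where $\rho_j\leq 2^{-j}$) and retain only its restriction to the null set $N$; rectifiability of the curves in $\Gamma$ is exactly what forces the discarded part to contribute at most $\ell(\gamma)<+\infty$ to each curve integral, so that the divergence of $\int_\gamma\sigma$ must be concentrated on $N$. Once this mechanism is in place the rest is routine, modulo checking Borel measurability of $N$, $\sigma$ and $\rho$, which is straightforward.
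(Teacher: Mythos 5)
Your proof is correct and follows essentially the same route as the paper: the scaling argument for $(b)\Rightarrow(a)$ is identical, and your $(a)\Rightarrow(c)$ combines the paper's summation $\sum_n\rho_n$ with the same key mechanism (rectifiability of $\gamma$ forces the bounded part to contribute at most $\ell(\gamma)<\infty$, so the divergence concentrates on a null set). The only cosmetic difference is that you discard the admissible part by multiplying by $\chi_N$, whereas the paper subtracts the essential supremum after sending the exceptional values to $+\infty$.
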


\begin{proof}
$(a)\Rightarrow(b)$ If $\Mod_{\infty}{\Gamma}=0$, for each $n\in\N$
there exists $\rho_n\in F(\Gamma)$ such that
$\|\rho_n\|_{L^{\infty}}<1/2^n$. Let $\rho=\sum_{n\geq 1}\rho_n$.
Then $\|\rho\|_{L^{\infty}}\leq\sum_{n=1}^{\infty}1/2^n=1$ and
$\int_{\gamma}\rho=\int_{\gamma}\sum_{n\geq 1}\rho_n=\infty$.

$(b)\Rightarrow(a)$ On the other hand, let $\rho_n=\rho/n$ for all
$n\in\N$. By hypothesis $\int_{\gamma}\rho_n=\infty$ for all
$n\in\N$
 and $\gamma\in\Gamma$. Then $\rho_n\in F(\Gamma)$ and $\|\rho\|_{L^{\infty}}/n\rightarrow 0$ as $n\rightarrow\infty$. Hence
 $\Mod_{\infty}(\Gamma)=0$.

 $(b)\Rightarrow(c)$ By hypothesis there exists a Borel
measurable function $0\leq\rho\in L^{\infty}(X)$ such that,
$$
\int_{\gamma}\rho=+\infty\,\,\text{ for every }\gamma\in\Gamma.
$$
Consider the function
$$
h(x)= \left\{
\begin{array}{ll}
\|\rho\|_{L^{\infty}}&\text{if $\|\rho\|_{L^{\infty}}\geq \rho(x),$}\\[5pt]
\infty&\text{if $\rho(x)>\|\rho\|_{L^{\infty}}$.}
\end{array}
\right.
$$
Notice that $\|\rho\|_{L^{\infty}}=\|h\|_{L^{\infty}}$, and since
$\int_{\gamma}\rho=+\infty$ for every $\gamma\in\Gamma_1$ and
$\rho\leq h$, we have that $\int_{\gamma}h=+\infty$ for every
$\gamma\in\Gamma_1$. Now, we define the function
$\varrho=h-\|h\|_{L^{\infty}}$ which has
$\|\varrho\|_{L^{\infty}}=0$ and
$$
\int_{\gamma}\varrho=\int_{\gamma}h-\|h\|_{L^{\infty}}\ell(\gamma)=+\infty\,\,\text{
for every }\gamma\in\Gamma_1.
$$
\end{proof}

Now we are ready to define the notion of \em $\infty-$weak upper
gradient \em.
\begin{defn}\em
A non-negative Borel function $g$ on $X$ is an \em $\infty-$weak
upper gradient \em of an extended real-valued function $f$ on X, if
$$
|f(\gamma(a))-f(\gamma(b))|\leq\int_{\gamma}g
$$
for $\infty-$a.e. curve every curve $\gamma\in\Upsilon$.
\end{defn}
Let $\widetilde{N}^{1,\infty}(X,d,\mu)$, be the class of all
functions $f\in L^\infty(X)$ Borel for which there exists an
$\infty-$weak upper gradient in $L^\infty$. For
$f\in\widetilde{N}^{1,\infty}(X,d,\mu)$ we define
$$
\|u\|_{\widetilde{N}^{1,\infty}}=\|u\|_{L^\infty}+\inf_{g}\|g\|_{L^\infty},
$$
where the infimum is taken over all $\infty-$weak upper gradients
$g$ of $u$.

\begin{defn}\em(Newtonian space for $p=\infty$)
We define an equivalence relation in $\widetilde{N}^{1,\infty}$ by
$u\sim v$ if and only if $\|u-v\|_{\widetilde{N}^{1,\infty}}=0$.
Then the space $N^{1,\infty}(X,d,\mu)$ is defined as the quotient
$\widetilde{N}^{1,\infty}(X,d,\mu)/\sim$ and it is equipped with the
norm
$$
\|u\|_{N^{1,\infty}}=\|u\|_{\widetilde{N}^{1,\infty}}.
$$
\end{defn}

Note that if $u\in \widetilde{N}^{1,\infty}$ and $v=u$ $\mu-$a.e.,
then it is not necessarily true that $v\in
\widetilde{N}^{1,\infty}$. Indeed, let $(X=[-1,1],d,\lambda)$ where
$d$ denotes the Euclidean distance and $\lambda$ the Lebesgue
measure. Let $u:X\rightarrow \R$ be the function $u=1$ and
$v:X\rightarrow\R$ given by $v=1$ if $x\neq 0$ and $v(x)=\infty$ if
$x=0$. In this case we have that $u=v$ $\mu-a.e.$, $u\in
\widetilde{N}^{1,\infty}$ but $v\notin \widetilde{N}^{1,\infty}$. It
can be shown that if $u,v\in \widetilde{N}^{1,\infty}$, and  $v=u$
$\mu-$a.e., then $\|u-v\|_{\widetilde{N}^{1,\infty}}=0$. In
addition, $N^{1,\infty}(X)$ is a Banach space. Both results can be
checked adapting properly the respective proofs for the case
$p<\infty$. For further details see \cite{Sh1}.

\begin{lem}\label{lipgradient}
If $f\in D(X)$ then $\Lip (f)$ is an upper gradient of $f$.
\end{lem}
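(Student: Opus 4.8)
The goal is to show that for $f \in D(X)$, the function $\Lip f$ serves as an upper gradient of $f$, i.e., that $|f(\gamma(a)) - f(\gamma(b))| \le \int_\gamma \Lip f$ for every rectifiable curve $\gamma \colon [a,b] \to X$. The natural approach is to parametrize $\gamma$ by arc length, so we may assume $\ell(\gamma|_{[a,t]})$ is the length parameter and $\gamma$ is $1$-Lipschitz. Then I would consider the composition $g = f \circ \gamma \colon [a,b] \to \R$ and aim to show two things: first, that $g$ is locally Lipschitz (in fact absolutely continuous) on $[a,b]$, so that $g(b) - g(a) = \int_a^b g'(t)\, dt$ by the fundamental theorem of calculus; and second, that $|g'(t)| \le \Lip f(\gamma(t))$ for almost every $t$. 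Combining these with $|g(b) - g(a)| \le \int_a^b |g'(t)|\,dt \le \int_a^b \Lip f(\gamma(t))\, dt = \int_\gamma \Lip f$ gives the result.

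The key estimate is the pointwise bound on $g'$. Fix $t$ where $g$ is differentiable and $\gamma(t)$ is not isolated. For $s$ near $t$, $|g(s) - g(t)| = |f(\gamma(s)) - f(\gamma(t))| \le \big(\Lip f(\gamma(t)) + \veps\big)\, d(\gamma(s), \gamma(t))$ once $s$ is close enough that $\gamma(s)$ lies in the appropriate ball around $\gamma(t)$ (this uses exactly the inf-sup characterization of $\Lip f$ exploited in Lemma~\ref{dcont}). Since $d(\gamma(s), \gamma(t)) \le |s - t|$ by the arc-length parametrization, we get $|g(s) - g(t)| \le (\Lip f(\gamma(t)) + \veps)|s-t|$, so dividing by $|s-t|$ and letting $s \to t$ yields $|g'(t)| \le \Lip f(\gamma(t)) + \veps$; then let $\veps \to 0$. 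The same local inequality, applied uniformly with $\|\Lip f\|_\infty$ in place of $\Lip f(\gamma(t))$ and a compactness/covering argument along $\gamma$ exactly as in the proof of Lemma~\ref{help}, shows $g$ is Lipschitz on $[a,b]$, hence absolutely continuous, so the fundamental theorem of calculus applies and $g$ is differentiable a.e.

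One should also address measurability: $\Lip f$ is a Borel function (noted in the text), and $\gamma$ is continuous, so $t \mapsto \Lip f(\gamma(t))$ is Borel measurable and the integral $\int_\gamma \Lip f$ makes sense (possibly $+\infty$, in which case the inequality is trivial). The main obstacle, and the only genuinely delicate point, is handling the set of parameters $t$ at which $\gamma$ is "locally constant" or $\gamma(t)$ is isolated in $X$: at such points $\Lip f(\gamma(t))$ is defined to be $0$, but one must check this does not break the estimate $|g'(t)| \le \Lip f(\gamma(t))$. This is fine because if $\gamma$ is constant on a neighborhood of $t$ then $g'(t) = 0$, and if $\gamma(t)$ is genuinely isolated then $\gamma$ must be constant near $t$ by continuity, so again $g'(t) = 0$; the remaining points fall under the estimate above. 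Assembling these pieces gives the claim.
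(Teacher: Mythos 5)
Your proposal is correct and follows essentially the same route as the paper: parametrize $\gamma$ by arc length so that it is $1$-Lipschitz, bound $|(f\circ\gamma)'(t)|$ by $\Lip f(\gamma(t))$ at points of differentiability via the $\inf$-$\sup$ characterization of $\Lip$, and integrate using the fundamental theorem of calculus. The only difference is that the paper obtains a.e.\ differentiability of $f\circ\gamma$ by citing Stepanov's theorem, whereas you derive it from the more elementary observation that $f\circ\gamma$ is Lipschitz on $[a,b]$ (since $[a,b]$ is quasi-convex and $\Lip(f\circ\gamma)\leq\|\Lip f\|_{\infty}$); this is arguably tidier, since it also supplies the absolute continuity needed to write $f(\gamma(b))-f(\gamma(a))=\int_a^b(f\circ\gamma)'(t)\,dt$, a point the paper leaves implicit, and your explicit treatment of isolated points and of the measurability of $t\mapsto\Lip f(\gamma(t))$ is a welcome addition.
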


\begin{proof}
 Let $\gamma:[a,b]\rightarrow X$ be a rectifiable curve parametrized by arc-length which connects $x$ and $y$.
  It can be checked that $\gamma$ is $1-$Lipschitz (see for instance Theorem $3.2$ in \cite{haj}). The function $f\circ\gamma$
  is an infinitesimally Lipschitz function and by Stepanov's differentiability theorem (see \cite{BRZ}), it
is differentiable a.e. Note that $|(f\circ\gamma)'(t)|\leq\Lip
f(\gamma(t))$ at every point of $[a,b]$  where $(f\circ\gamma)$ is
differentiable. Now, we deduce that
 $$
 |f(x)-f(y)|\leq\Big|\int_a^b(f\circ\gamma)^{'}(t)dt\Big|\leq\int_a^b\Lip(f(\gamma(t)))\,dt
 $$
 as wanted.
\end{proof}

Now suppose that $\mu(B)>0$ for every open ball $B\subset X$. It is
clear by Lemma \ref{lipgradient} that $D^{\infty}(X)\,\subset
\widetilde{N}^{1,\infty}(X)$ and that the map
$$
\begin{array}{ccccl}
\phi&:&D^{\infty}(X)&\longrightarrow& N^{1,\infty}(X)\\
&&f&\longrightarrow&[f].
\end{array}
$$
is an inclusion. Indeed, if $f,g\in D^{\infty}(X) $ with $0=[f-g]\in
N^{1,\infty}(X)$, we have  $f-g=0$ $\mu-$a.e. Thus $f=g$ in a dense
subset and since $f,g$ are continuous we obtain that $f=g$.
Therefore we have the following chain of inclusions:
$$
\LIP^{\infty}(X)=M^{1,\infty}(X)\subset D^{\infty}(X)\subset
N^{1,\infty}(X),\qquad (*)
$$
and $\|\cdot\|_{N^{1,\infty}}\leq \|\cdot\|_{D^{\infty}}\leq
\|\cdot\|_{\LIP^{\infty}}\leq 2\,\|\cdot\|_{M^{1,\infty}}$.
\end{steps}The next example shows that in general $D^{\infty}(X)\neq
N^{1,\infty}(X)$.
\begin{example}\em
Consider the metric space $(X=\{B_n\}_n,d_e)$, where $d_e$ is the
restriction to $X$ of the Euclidean metric of $\R^2$ and $\{B_n\}_n$
is a sequence of open balls with radius convergent to zero, as shows
the picture below:
\begin{center}\setlength{\unitlength}{1mm}
\begin{picture}(60,30)
\put(-25,10){\vector(1,0){80}} \put(40,-5){\vector(0,1){30}}
\put(-10,10){\bigcircle{22}} \put(-11,9){$\bullet$}
\put(9,10){\bigcircle{16}}\put(8,9){$\bullet$}
\put(22,10){\bigcircle{10}}\put(21,9){$\bullet$}
\put(30,10){\bigcircle{6}}\put(29,9){$\bullet$}
\put(34.5,10){\bigcircle{3}} \put(37,10){\bigcircle{2}}
\put(38.8,10){\bigcircle{1.5}} \put(39.8,10){\bigcircle{1}}
\end{picture}
\end{center}
We define on $X$ a function in the following way:
$$ f(x,y)=\left\{
\begin{array}{rlll}
1&\text{if $(x,y)\in B_i$}& i=2k+1&k\in\Z,\\[4pt]
0&\text{if $(x,y)\in B_i$}& i=2k&k\in\Z.
\end{array}
\right.
$$
The constant function $g=0$ is clearly and upper gradient of $f$,
and so $f\in N^{1,\infty}(X)$. But, there is no continuous
representative for the function $f$. Thus, in particular, $f$ does
not admit a representative in $ D^{\infty}$.
\end{example}
In the following, we will look for conditions under which the
Sobolev spaces $M^{1,\infty}(X)$ and $N^{1,\infty}(X)$ coincide. In
particular, this will give us the equality of all the spaces in the
chain $(*)$ above. For that, we need some preliminary terminology
and results.

\begin{defn}\em
We say that a measure $\mu$ on $X$ is \em doubling \em if there is a
positive constant $C_{\mu}$ such that
$$
0<\mu(B(x,2r))\leq C_{\mu}\,\mu(B(x,r))<\infty,
$$
for each $x\in X$ and $r>0$. Here $B(x,r)$ denotes the open ball of
center $x$ and radius $r>0$.
\end{defn}

\begin{defn}  \em
Let $1\leq p<\infty$. We say that $(X,d,\mu)$ supports a \em weak
$p$-Poincaré inequality \em if there exist constants $C_p>0$ and
$\lambda\geq1$ such that for every Borel measurable function
$u:X\rightarrow\R$ and every upper gradient
$g:X\rightarrow[0,\infty]$ of $u$, the pair $(u,g)$ satisfies the
inequality
$$
\jint_{B(x,r)}|u-u_{B(x,r)}|\,d\mu\leq C_p\,r\Big(\jint_{B(x,\lambda
r)}g^p d\mu\Big)^{1/p}
$$
for each $B(x,r)\subset X$.

Here for arbitrary $A\subset X$ with $0<\mu(A)<\infty$ we write
$$
\jint_A f=\frac{1}{\mu(A)}\int_A f d\mu.
$$
\end{defn}
The Poincaré inequality creates a link between the measure, the
metric and the gradient and it provides a way to pass from the
infinitesimal information which gives the gradient to larger scales.
Metric spaces with doubling measure and Poincaré inequality admit
first order differential calculus akin to that in Euclidean spaces.
See \cite{Amb}, \cite{hei} or \cite{He2} for further information
about these topics.

The proof of the next result is strongly inspired in Proposition
$3.2$ in \cite{JJRRS}. However, we include all the details because
of the technical differences, which at certain points become quite
subtle.

\begin{thm}\label{Shamun}
Let $X$ be a complete metric space that supports a doubling Borel
measure $\mu$ which is non-trivial and finite on balls and suppose
that $X$ supports a weak $p$-Poincar\'e inequality for some $1\leq
p<\infty$. Let $\rho\in L^{\infty}(X)$ such that $0\leq\rho$. Then,
there exists a set $F\subset X$ of measure $0$ and a constant $K>0$
(depending only on $X$) such that for all $x,y\in X\setminus F$
there exist a rectifiable curve $\gamma$ such that
$\int_{\gamma}\rho<+\infty$ and $\ell(\gamma)\leq Kd(x,y)$.
\end{thm}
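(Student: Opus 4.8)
The plan is to reduce the statement to avoiding a single $\mu$-null set and then combine quasi-convexity with a modulus argument. First I would put $M=\|\rho\|_{L^{\infty}}$ and $E=\{x\in X:\rho(x)>M\}$, so that $\mu(E)=0$; it then suffices to produce a $\mu$-null set $F$ and a constant $K>0$ depending only on $X$ such that any $x,y\in X\setminus F$ are joined by a rectifiable curve $\gamma$ with $\ell(\gamma)\le K\,d(x,y)$ and $\int_{\gamma}\chi_{E}\,ds=0$, since then the arc-length parametrisation of $\gamma$ spends no length in $E$, hence $\rho(\gamma(s))\le M$ for a.e.\ $s$ and $\int_{\gamma}\rho\le M\,\ell(\gamma)\le MK\,d(x,y)<+\infty$. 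Two facts underlie what follows. The family $\Gamma_{E}=\{\gamma\in\Upsilon:\int_{\gamma}\chi_{E}\,ds>0\}$ has $\Mod_{p}(\Gamma_{E})=0$: write $\Gamma_{E}=\bigcup_{n}\{\gamma:\int_{\gamma}n\chi_{E}\ge1\}$, note $\int_{X}(n\chi_{E})^{p}\,d\mu=0$, and use countable subadditivity of $\Mod_{p}$ (equivalently $\Mod_{\infty}(\Gamma_{E})=0$ by Lemma~\ref{modinfzero}). And, by Cheeger \cite{Che} (see also \cite{hei}), a complete metric space carrying a doubling measure and satisfying a weak $p$-Poincar\'e inequality is quasi-convex. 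Since $X$ is proper (complete and doubling), I would pass to the bi-Lipschitz equivalent intrinsic length metric and assume for the rest of the argument that $X$ is geodesic, the new metric being comparable to $d$ with a constant depending only on $X$; curve lengths are unchanged, so this costs only a fixed factor in $K$.

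Next I would fix a base point $x_{0}$ and set
$$
u_{x_{0}}(y)=\inf\Big\{\ell(\gamma):\gamma\ \text{rectifiable from}\ x_{0}\ \text{to}\ y,\ \int_{\gamma}\chi_{E}\,ds=0\Big\},\qquad\inf\emptyset=+\infty.
$$
A concatenation argument shows that the constant function $1$ is a $p$-weak upper gradient of $u_{x_{0}}$: along any rectifiable $\sigma$ from $y$ to $y'$ with $\int_{\sigma}\chi_{E}=0$ — and by $\Mod_{p}(\Gamma_{E})=0$ this holds for $p$-a.e.\ $\sigma$ — pasting $\sigma$ onto near-optimal curves avoiding $E$ gives $|u_{x_{0}}(y)-u_{x_{0}}(y')|\le\ell(\sigma)$, with the usual conventions when a value is $+\infty$. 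The key claim, modelled on Proposition~$3.2$ of \cite{JJRRS}, is that $u_{x_{0}}(y)\le K\,d(x_{0},y)$ for $\mu$-a.e.\ $y$, with $K$ depending only on $X$; in particular $u_{x_{0}}<+\infty$ a.e. I would prove it as follows: since $X$ is geodesic there is, for each $y$, a curve from $x_{0}$ to $y$ of length $\lesssim d(x_{0},y)$, but it may charge $E$; I would approximate it by chains $x_{0}=z_{0},\dots,z_{N}=y$ with intermediate nodes chosen outside $E$ (possible since $\mu(B)>0$ for every ball), of comparable total length and with mesh tending to $0$, using the weak $p$-Poincar\'e inequality together with doubling — via a telescoping maximal-function estimate — to bound uniformly the lengths of the geodesic segments between consecutive nodes, then extract a limit curve by Arzel\`a--Ascoli (using properness), and finally invoke a Fuglede-type argument, exploiting that these curves run in a family of positive $p$-modulus while $\Mod_{p}(\Gamma_{E})=0$, to arrange that for $\mu$-a.e.\ endpoint $y$ the limit curve satisfies $\int_{\gamma}\chi_{E}\,ds=0$, whence $u_{x_{0}}(y)\le K\,d(x_{0},y)$ there.

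The step I expect to be the main obstacle — the one that "becomes quite subtle" — is exactly this last point: carrying the \emph{linear} length bound through the chain limit at the same time as the essential avoidance of $E$. This is genuinely more than the $\MEC_p$-conclusion of \cite{JJRRS}, where only finiteness of the analogue of $u_{x_{0}}$ is sought, so the modulus/Fuglede bookkeeping has to be interleaved with the quantitative quasi-convexity estimate rather than applied afterwards. To finish I would merge the exceptional sets into a single $\mu$-null set $F$, using the symmetry $u_{x_{0}}(y)=u_{y}(x_{0})$ (reverse the curve), a Fubini argument on $\mu\times\mu$ applied to $\{(x_{0},y):u_{x_{0}}(y)>K\,d(x_{0},y)\}$, a countable dense set of base points, and the lower semicontinuity of $u_{x_{0}}$; then, for any $x,y\in X\setminus F$, a near-optimal curve for $u_{x}(y)$ joins $x$ and $y$ with $\ell(\gamma)\le K\,d(x,y)$ and $\int_{\gamma}\chi_{E}\,ds=0$, so by the first-paragraph reduction $\int_{\gamma}\rho\le M\,\ell(\gamma)<+\infty$, as required.
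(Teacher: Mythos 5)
Your global strategy is the same as the paper's (which follows Proposition 3.2 of \cite{JJRRS}): isolate the null set $E=\{\rho>\|\rho\|_{L^{\infty}}\}$, use doubling, the Poincar\'e inequality and quasi-convexity to build a distance-like function to a base point, prove a Lipschitz bound for it off a null set, and read off the short curve from a near-minimizer. But the step you yourself flag as the main obstacle is exactly where the proposal has a genuine gap, and the sketch you give for it does not work as stated, for two reasons. First, you cannot feed $u_{x_0}$ (or any telescoping/maximal-function estimate for it) into the weak $p$-Poincar\'e inequality: that inequality presupposes a function that is integrable on balls, while the finiteness of $u_{x_0}$ almost everywhere is essentially the statement being proved; declaring that $1$ is a $p$-weak upper gradient of $u_{x_0}$ does not by itself yield $u_{x_0}(y)\le K\,d(x_0,y)$ a.e. Second, the ``Fuglede-type argument'' to make the Arzel\`a--Ascoli limit curve satisfy $\int_{\gamma}\chi_{E}\,ds=0$ cannot work, because $\chi_{E}$ is not lower semicontinuous: $\int_{\gamma}\chi_{E}\,ds$ is not lower semicontinuous under uniform convergence of curves, so a limit of chains or curves whose nodes avoid $E$ may perfectly well spend positive length inside $E$ (a null set can contain entire geodesics, e.g.\ a segment in the plane). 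The fact that $\Mod_p(\Gamma_E)=0$ tells you that \emph{most} curves avoid $E$, but gives no control over the one specific curve produced by compactness.

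The paper's proof is built precisely to circumvent these two obstructions, and these devices are the real content of the argument: $E$ is enlarged to open sets $V_n$ with $\mu(V_n)\le (n2^n)^{-p}$, the penalty $\rho_n=\|\rho\|_{L^{\infty}}+\sum_{m\ge n}\chi_{V_m}$ is then lower semicontinuous and in $L^p$, the truncations $\rho_{n,k}=\min\{\rho_n,k\}$ make the functions $u_{n,k}(x)=\inf\{\ell(\gamma)+\int_{\gamma}\rho_{n,k}\}$ genuinely Lipschitz so that the Poincar\'e inequality and the telescoping ball estimate apply to them, the maximal-function sets $S_n=\{M((\rho_n-\|\rho\|_{L^{\infty}})^p)\le 1\}$ give a $K_0$ depending only on $X$ and a single exceptional set $F=X\setminus\bigcup_n S_n$ valid for all pairs of points simultaneously (so no Fubini or dense-base-point argument is needed), and only then does one pass to the limit in $k$, using lower semicontinuity of $\int_{\gamma}(1+\rho_{n,k_0})$ along uniformly convergent curves (Lemma 2.2 of \cite{JJRRS}). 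Your proposal replaces all of this by working directly with the non-lower-semicontinuous constraint $\int_{\gamma}\chi_E\,ds=0$, and no substitute for the $V_n$, $\rho_{n,k}$, $S_n$ machinery is supplied; as written, the key claim $u_{x_0}(y)\le K\,d(x_0,y)$ a.e.\ remains unproved.
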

\begin{proof}
We may assume that $0<\|\rho\|_{L^{\infty}}\leq 1$. Indeed, in other
case, we could take
$\widetilde{\rho}=\rho/(1+\|\rho\|_{L^{\infty}})$. Let $E=\{x\in X:\
\rho(x)>\|\rho\|_{L^{\infty}}\}$, which is a set of measure zero. By
Theorem $2.2$ in \cite{hei}, there exists a constant $C$ depending
only on the doubling constant $C_\mu$ of $X$ such that for each
$f\in L^1(X)$ and for all $t>0$
$$
\mu(\{M(f)>t\})\leq \frac{C}{t}\int_X|f|d\mu
$$
Recall that $M(f)(x)=\sup_{r>0}\{\jintb_{B(x,r)} |f|d\mu$\}.

For each $n\geq 1$ we can choose $V_n$ be an open set such that
$E\subset V_n$ and $\mu(V_n)\leq\big(\frac{1}{n2^n}\big)^p$ (see
Theorem $1.10$ in \cite{Mat}). Note that $E\subseteq\bigcap_{n\geq
1}V_n=E_0$ and $\mu(E_0)=\mu(E)=0$.

Next, consider the family of functions
$$
\rho_n=\|\rho\|_{L^{\infty}}+\sum_{m\geq n}\chi_{V_m}
$$
and the function $\rho_0$ given by the formula
$$
\rho_0(x)=\left\{
\begin{array}{cc}
\|\rho\|_{L^{\infty}}&\text{ if $x\in X\setminus E_0$},\\[4pt]
+\infty&\text{ otherwise.}
\end{array}
\right.
$$
We have the following properties:
\begin{itemize}
\item[(i)] $\rho_n|_{X\setminus V_n}\equiv\|\rho\|_{L^{\infty}}$.
\item[(ii)] $\rho\leq\rho_0\leq \rho_m\leq \rho_n$ if $n\leq m$.
\item[(iii)] $\rho_n|_{E_0}\equiv+\infty$.
\item[(iv)] $\rho_n\in L^p(X)$ is lower semicontinuous; in fact $\|\rho_n-\|\rho\|_{L^\infty}\|_{L^p}\leq\frac{1}{n}$.

Indeed, since each of the sets $V_m$ are open then the functions
$\chi_{V_m}$ are lower semicontinuous (see Proposition $7.11$ in
\cite{fol}) and so once we check that
$\|\rho_n-\|\rho\|_{L^\infty}\|_{L^p}\leq\frac{1}{n}$, we will be
done. For that, is is enough to prove that $\sum_{m\geq
n}\|\chi_{V_m}\|_{L_p}\leq\frac{1}{n}$, which follows from the
formula
$$
\sum_{m\geq n}\|\chi_{V_m}\|_{L_p}=\sum_{m\geq
n}(\mu(V_n))^{1/p}=\sum_{m\geq
n}\frac{1}{m2^m}\leq\frac{1}{n}\sum_{m\geq
n}\frac{1}{2^m}\leq\frac{1}{n}.
$$

\item[(v)] $\mu(\{M((\rho_n-\|\rho\|_{L^{\infty}})^p)> 1\})\leq \frac{C}{n^{p}}$.

Indeed, as we have seen above
\begin{equation*}
\begin{split}
\mu(\{M((\rho_n-\|\rho\|_{L^{\infty}})^p)> 1\})\leq &
\,\frac{C}{1}\int_X|\rho_n-\|\rho\|_{L^{\infty}}|^p\\=&
\,C\|\rho_n-\|\rho\|_{L^\infty}\|_{L^p}^p<C\frac{1}{n^p}.
\end{split}
\end{equation*}
\end{itemize}
For each $n\geq 1$ consider the set
$$
S_n=\{x\in X:\ M((\rho_n-\|\rho\|_{L^{\infty}})^p)(x)\leq 1\}
$$
We claim that: \em $S_n\subset S_m$ if $n\leq m$ and
$F=X\setminus\bigcup_{n\geq1}S_n$ has measure $0$\em.

Indeed, if $n\leq m$, we have that $0\leq
\rho_m-\|\rho\|_{L^\infty}\leq\rho_n-\|\rho\|_{L^\infty}$ and so
$$
0\leq
(\rho_m-\|\rho\|_{L^\infty})^p\leq(\rho_n-\|\rho\|_{L^\infty})^p;
$$
hence $S_n\subset S_m$. On the other hand by (v) above, we have
$\mu(X\setminus S_n)\leq \frac{C}{n^{p}}$. Thus,
\begin{equation*}
\begin{split}
0\leq\mu(F)=&\mu\Big(X\setminus\bigcup_{n\geq1}S_n\Big)=\mu\Big(\bigcap_{n\geq
1}(X\setminus S_n)\Big)=\lim_{n\to\infty}\mu(X\setminus S_n)\leq
\lim_{n\to\infty}\frac{C}{n^{p}}=0.
\end{split}
\end{equation*}

After all this preparatory work, our aim is to prove that there
exists a constant $K>0$ depending only on $X$ such that for all
$x,y\in X\setminus F$ there exist a rectifiable curve $\gamma$ such
that $\int_{\gamma}\rho<+\infty$ and $\ell(\gamma)\leq K d(x,y)$.
The constant $K$ will be constructed along the remainder of the
proof. In what follows let $m_0$ be the smallest integer for which
$S_{m_0}\neq\emptyset$. Fix $n\geq m_0$ and a point $x_0\in
S_{n}\subset X\setminus F$. As one can check straightforwardly, it
is enough to prove that for each $x\in S_n$ there exists a
rectifiable curve $\gamma$ such that $\int_{\gamma}\rho<+\infty$ and
$\ell(\gamma)\leq K d(x,y)$, where the constant $K$ depends only on
$X$ and not on $x_0$ or $n$.

For our purposes, we define the set $\varGamma_{xy}$ as the set of
all the rectifiable curves connecting $x$ and $y$. Since a complete
metric space $X$ supporting a doubling measure and a weak
$p-$Poincaré inequality is quasi-convex (see Theorem $17.1$ in
\cite{Che}), it is clear that $\varGamma_{xy}$ is nonempty. We
define the function
$$
u_n(x)=\inf\Big\{\ell(\gamma)+\int_{\gamma}\rho_n:\
\gamma\in\varGamma_{x_0x}\Big\}.
$$
Note that $u_n(x_0)=0$. We will prove that in $S_n$ the function
$u_n$ is bounded by a Lipschitz function $v_n$ with a constant $K_0$
which depends only on $X$ and $\|\rho\|_{L^\infty}$ (and not on
$x_0$ nor $n$) such that $v_n(x_0)=0$. Assume this for a moment. We
have
$$
0\leq u_n(x)=u_n(x)-u_n(x_0)\leq v_n(x)-v_n(x_0)\leq
K_0d(x,x_0)<(K_0+1)d(x,x_0).
$$
Thus, there exists a rectifiable curve $\gamma\in\varGamma_{x_0x}$
such that
$$
\ell(\gamma)+\int_{\gamma}\rho\leq
\ell(\gamma)+\int_{\gamma}\rho_n\leq (K_0+1)d(x,x_0).
$$
Hence, taking $K=K_0+1$, we will have
$$
\ell(\gamma)\leq Kd(x,x_0)\qquad\text{and}\qquad
\int_{\gamma}\rho<+\infty,
$$
as we wanted.

Therefore, consider the functions $u_{n,k}:X\to\R$ given by
$$
u_{n,k}=\inf\Big\{\ell(\gamma)+\int_{\gamma}\rho_{n,k}:\
\gamma\in\varGamma_{x_0x}\Big\}
$$
where $\rho_{n,k}=\min\{\rho_n,k\}$ which is a lower semicontinuous
function. Let us see that the functions $u_{n,k}$ are Lipschitz for
each $k\geq 1$ (and in particular continuous) and that
$\rho_{n,k}+1\leq \rho_n+1$ are upper gradients for $u_{n,k}$. Since
$X$ is quasi-convex, it follows that $u_{n,k}(x)<+\infty$ for all
$x\in X$.

Indeed, let $y,z\in X$, $C_q$ the constant of quasi-convexity for
$X$ and $\veps>0$. We may assume that $u_{n,k}(z)\geq u_{n,k}(y)$.
Let $\gamma_y\in\varGamma_{x_0y}$ be such that
$$
u_{n,k}(y)\geq \ell(\gamma_y)+\int_{\gamma_y}\rho_{n,k}-\veps.
$$
On the other hand, for each rectifiable curve
$\gamma_{yz}\in\varGamma_{yz}$, we have
$$
u_{n,k}(z)\leq\ell(\gamma_y\cup\gamma_{yz})+\int_{\gamma_y\cup\gamma_{yz}}\rho_{n,k},
$$
and so
$$
|u_{n,k}(z)-u_{n,k}(y)|=u_{n,k}(z)-u_{n,k}(y)\leq\ell(\gamma_{yz})+\int_{\gamma_{yz}}\rho_{n,k}=\int_{\gamma_{yz}}(\rho_{n,k}+1).
$$
Thus, $\rho_{n,k}+1$ is an upper gradient for $u_{n,k}$. In
particular, if $\ell(\gamma_{zy})\leq C_qd(z,y)$, we deduce that
$$
|u_{n,k}(z)-u_{n,k}(y)|\leq (k+1)\ell(\gamma_{zy})\leq
C_q(k+1)d(z,y)
$$
and so $u_{n,k}$ is a $C_q(k+1)$-Lipschitz function. Our purpose now
is to prove that the restriction to $S_n$ of each function $u_{n,k}$
is a Lipschitz function on $S_n$ with respect to a constant $K_0$
which depends only on $X$. Fix $y,z\in S_n$ . For each $i\in \Z$,
define $B_i=B(z, 2^{-i},d(z,y))$ if $i\geq1$, $B_0=B(z,2d(z,y))$,
and $B_i =B(y,2^id(z,y))$ if $i\leq-1$. To simplify notation we
write $\lambda B(x,r)=B(x,\lambda  r)$. In the first inequality of
the following estimation we use the fact that, since $u_{n,k}$ is
continuous, all points of $X$ are Lebesgue points of $u_{n,k}$.
Using the weak $p$-Poincar\'e inequality and the doubling condition
we get the third inequality. From the Minkowski inequality we deduce
the fifth while the last one follows from the definition of $S_n$:
\begin{equation*}
\begin{split}
|&u_{n,k}(z)-u_{n,k}(y)|\leq\sum_{i\in\Z}\Big|\jint_{B_i}u_{n,k}d\mu-\jint_{B_{i+1}}u_{n,k}d\mu\Big|\\
&\stackrel{(*)}{\leq}\sum_{i\in\Z}\frac{1}{\mu(B_i)}\int_{B_i}\Big|u_{n,k}-\jint_{B_{i+1}}u_{n,k}d\mu\Big|d\mu\\
&\leq C_{\mu}C_pd(z,y)\sum_{i\in\Z}2^{-|i|}\Big(\frac{1}{\mu(\lambda  B_i)}\int_{\lambda  B_i}(\rho_{n,k}+1)^p\Big)^{1/p}\\
&\leq C_{\mu}C_pd(z,y)\sum_{i\in\Z}2^{-|i|}\Big(\frac{1}{\mu(\lambda  B_i)}\int_{\lambda  B_i}((\rho_{n,k}-\|\rho\|_{L^{\infty}})+\|\rho\|_{L^\infty}+1)^p\Big)^{1/p}\\
&\leq C_{\mu}C_pd(z,y)\sum_{i\in\Z}2^{-|i|}\Big(\|\rho\|_{L^\infty}+1+\Big(\frac{1}{\mu(\lambda  B_i)}\int_{\lambda  B_i}(\rho_{n,k}-\|\rho\|_{L^{\infty}})^p\Big)\Big)^{1/p}\Big)\\
&\leq 3\, C_{\mu}C_pd(z,y)\sum_{i\in\Z}2^{-|i|}\leq K_0d(z,y)
\end{split}
\end{equation*}
where $K_0=9C_{\mu}C_p$ is a constant that depends only on $X$.
Recall that $C_{\mu}$ is the doubling constant and $C_p$ is the
constant which appears in the weak $p-$Poincaré inequality. Let us
see with more detail inequality $(*)$. If $i>0$, we have that
\begin{equation*}
\begin{split}
\Big|\jint_{B_i}u_{n,k}d\mu-\jint_{B_{i+1}}u_{n,k}d\mu\Big|\leq&\,\,\frac{1}{\mu(B_{i+1})}\Big|\int_{B_{i+1}}\Big(u_{n,k}-\jint_{B_{i}}u_{n,k}\,d\mu\Big)d\mu\Big|\\
\leq&\,\,\frac{\mu(B_i)}{\mu(B_i)}\frac{1}{\mu(B_{i+1})}\Big|\int_{B_{i}}\Big(u_{n,k}-\jint_{B_{i}}u_{n,k}\,d\mu\Big)d\mu\Big|\\
\leq&\,\,\frac{C_{\mu}}{\mu(B_{i})}\Big|\int_{B_{i}}\Big(u_{n,k}-\jint_{B_{i}}u_{n,k}\,d\mu\Big)d\mu\Big|.
\end{split}
\end{equation*}
We have used that $B_{i+1}\subset B_i$ for $i>0$ and that $\mu$ is a
doubling measure and so $\mu(2B_{i+1})=\mu(B_i)\leq
C_{\mu}\,\mu(B_{i+1})$. The cases $i<0$ and $i=0$ are similar.

Thus, the restriction of $u_{n,k}$ to $S_n$ is a $K_0$-Lipschitz
function for all $k\geq 1$. Note that $u_{n,k}\leq u_{n,k+1}$ and
therefore we may define
$$
v_n(x)=\sup_k\{u_{n,k}(x)\}=\lim_{k\to\infty}u_{n,k}(x).
$$
Whence $v_n$ is a $K_0$-Lipschitz function on $S_n$. Since
$v(x_0)=0$ and $x_0\in S_m$ when $m\geq m_0$ we have that
$v(x)<\infty$ and so, it is enough to check that $u_n(x)\leq v_n(x)$
for $x\in S_n$. Now, fix $x\in S_n$. For each $k\geq 1$ there is
$\gamma_k\in\varGamma_{x_0x}$ such that
$$
\ell(\gamma_k)+\int_{\gamma_k}\rho_{n,k}\leq
u_{n,k}(x)+\frac{1}{k}\leq K_0d(x,x_0)+\frac{1}{k}.
$$
In particular, $\ell(\gamma_k)\leq K_0d(x,x_0)+1:=M$ for every
$k\geq1$ and so, by reparametrization, we may assume that $\gamma_k$
is an $M$-Lipschitz function and
$\gamma_k:[0,1]\to\overline{B(x_0,M)}$ for all $k\geq 1$. Since X is
complete and doubling, and therefore closed balls are compact, we
are in a position to use the Ascoli-Arzela theorem to obtain a
subsequence $\{\gamma_k\}_k$ (which we denote again by
$\{\gamma_k\}_k$ to simplify notation) and $\gamma:[0,1]\to X$ such
that $\gamma_k\to\gamma$ uniformly. For each $k_0$, the function
$1+\rho_{n,k_0}$ is lower semicontinuous,
 and therefore by Lemma $2.2$ in \cite{JJRRS} and the fact that $\{\rho_{n,k}\}$ is an increasing sequence of functions,
 we have
\begin{equation*}
\begin{split}
\ell(\gamma)+\int_{\gamma}\rho_{n,k_0}=&\int_{\gamma}(1+\rho_{n,k_0})\leq\lim\inf_{k\to\infty}\int_{\gamma_k}(1+\rho_{n,k_0})\leq\lim\inf_{k\to\infty}\int_{\gamma_k}(1+\rho_{n,k}).
\end{split}
\end{equation*}
Using the monotone convergence theorem on the left hand side and
letting $k_0$ tend to infinity yields
$$
\ell(\gamma)+\int_{\gamma}\rho_{n}\leq\lim\inf_{k\to\infty}\int_{\gamma_k}(1+\rho_{n,k}).
$$
Since $\gamma\in\varGamma_{x_0x}$ we have
\begin{equation*}
\begin{split}
u_n(x)\leq\ell(\gamma)+\int_{\gamma}\rho_{n}\leq&\lim\inf_{k\to\infty}\int_{\gamma_k}(1+\rho_{n,k})
\\\leq&\lim\inf_{k\to\infty}\Big(u_{n,k}(x)+\frac{1}{k}\Big)\leq
v_n(x),
\end{split}
\end{equation*}
and that completes the proof.
\end{proof}
\begin{remark}\em
In Theorem \ref{Shamun} we can change the hypothesis of completeness
for the space $X$ by local compactness. The proof is analogous to
the one of Theorem $1.6$ in \cite{JJRRS}, and we do not include the
details.
\end{remark}

\begin{cor}\label{NLip}
Let $X$ be a complete metric space that supports a doubling Borel
measure $\mu$ which is non-trivial and finite on balls. If $X$
supports a weak $p$-Poincar\'e inequality for $1\leq p<\infty$, then
$\LIP^{\infty}(X)=M^{1,\infty}(X)=N^{1,\infty}(X)$ with equivalent
norms.
\end{cor}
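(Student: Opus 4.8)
The plan is to use the chain of inclusions $(*)$ together with the already available identity $\LIP^{\infty}(X)=M^{1,\infty}(X)$: everything reduces to proving $N^{1,\infty}(X)\subseteq\LIP^{\infty}(X)$ and that the $\LIP^{\infty}$-norm is controlled by the $N^{1,\infty}$-norm. The quantitative core is supplied entirely by Theorem \ref{Shamun}; the remaining work is bookkeeping with exceptional families of curves of vanishing $\infty$-modulus.

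So fix $f\in\widetilde{N}^{1,\infty}(X)$ and let $g\in L^{\infty}(X)$ be a non-negative Borel $\infty$-weak upper gradient of $f$; put $M=\|g\|_{L^{\infty}}$. Let $\Gamma_1\subset\Upsilon$ be the family of rectifiable curves along which $|f(\gamma(a))-f(\gamma(b))|\le\int_{\gamma}g$ fails, so $\Mod_{\infty}(\Gamma_1)=0$, and set $E=\{g>M\}$, $\Gamma_E=\{\gamma\in\Upsilon:\int_{\gamma}\chi_E>0\}$. Since $E$ is $\mu$-null, $\Gamma_E=\bigcup_{k\ge1}\{\gamma:\int_{\gamma}\chi_E\ge 1/k\}$ is a countable union of families each annihilated by the function $k\chi_E$ (which has $\int_{\gamma}k\chi_E\ge1$ on the corresponding family and $\|k\chi_E\|_{L^{\infty}}=0$); as $\Mod_{\infty}$ is an outer measure, $\Mod_{\infty}(\Gamma_E)=0$. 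Now put $\Gamma_0=\Gamma_1\cup\Gamma_E$ and $\bar g=\min\{g,M\}$. Off $\Gamma_0$ one has $\int_{\gamma}\bar g=\int_{\gamma}g\ge|f(\gamma(a))-f(\gamma(b))|$, so $\bar g$ is again a (Borel, non-negative) $\infty$-weak upper gradient of $f$, and moreover $\bar g\le M$ everywhere, hence $\int_{\gamma}\bar g\le M\,\ell(\gamma)$ for every rectifiable curve $\gamma$. Since $\Mod_{\infty}(\Gamma_0)=0$, Lemma \ref{modinfzero} provides a non-negative Borel $\sigma\in L^{\infty}(X)$ with $\int_{\gamma}\sigma=+\infty$ for every $\gamma\in\Gamma_0$. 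Finally set $\rho=\bar g+\sigma\in L^{\infty}(X)$, $\rho\ge 0$.

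Now apply Theorem \ref{Shamun} to $\rho$: there are a set $F\subset X$ with $\mu(F)=0$ and a constant $K>0$ depending only on $X$ such that for all $x,y\in X\setminus F$ there is a rectifiable curve $\gamma$ joining $x$ and $y$ with $\int_{\gamma}\rho<+\infty$ and $\ell(\gamma)\le K\,d(x,y)$. Since $\int_{\gamma}\sigma\le\int_{\gamma}\rho<+\infty$, the curve $\gamma$ cannot belong to $\Gamma_0$, so the upper gradient inequality holds along it and
$$
|f(x)-f(y)|\le\int_{\gamma}\bar g\le M\,\ell(\gamma)\le MK\,d(x,y)\qquad\text{for all }x,y\in X\setminus F.
$$
Thus $f$ is $MK$-Lipschitz on $X\setminus F$. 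Because $\mu$ is doubling and non-trivial on balls, every open ball has positive measure, so $X\setminus F$ is dense in $X$; since $X$ is complete, $f|_{X\setminus F}$ extends uniquely to an $MK$-Lipschitz function $\widetilde f$ on $X$, with $\widetilde f=f$ $\mu$-a.e. and $\|\widetilde f\|_{\infty}=\|f\|_{L^{\infty}}$. Hence $[f]=[\widetilde f]$ in $N^{1,\infty}(X)$ while $\widetilde f\in\LIP^{\infty}(X)$, which gives $N^{1,\infty}(X)\subseteq\LIP^{\infty}(X)$.

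For the norms, taking the infimum over all admissible $\infty$-weak upper gradients $g$ of $f$ in the displayed estimate yields $\LIP(\widetilde f)\le K\inf_{g}\|g\|_{L^{\infty}}$; together with $\|\widetilde f\|_{\infty}=\|f\|_{L^{\infty}}$ this bounds $\|\widetilde f\|_{\LIP^{\infty}}$ by a fixed multiple of $\|f\|_{N^{1,\infty}}$, and combining with the inequalities $\|\cdot\|_{N^{1,\infty}}\le\|\cdot\|_{D^{\infty}}\le\|\cdot\|_{\LIP^{\infty}}\le 2\,\|\cdot\|_{M^{1,\infty}}$ from $(*)$ and with $\LIP^{\infty}(X)=M^{1,\infty}(X)$ we conclude that the three spaces coincide with equivalent norms. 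The only genuinely delicate point is the second paragraph: one must make sure that neither the truncation $g\mapsto\min\{g,M\}$ nor the curve produced by Theorem \ref{Shamun} runs into an exceptional family of curves; once this is arranged, the whole quantitative content is delivered by Theorem \ref{Shamun}.
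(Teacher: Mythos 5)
Your argument is correct and follows essentially the same route as the paper: upgrade the $\infty$-weak upper gradient to a genuine one controlled pointwise by $\|g\|_{L^{\infty}}$ along the curves supplied by Theorem \ref{Shamun}, conclude that $f$ is $K\|g\|_{L^{\infty}}$-Lipschitz off a null set, and pass to the Lipschitz representative. The only (harmless) deviation is in the normalization step: where the paper redefines the upper gradient to be the constant $\|\rho_0\|_{L^{\infty}}$ off a null set so that $\int_{\gamma}\rho=\|\rho\|_{L^{\infty}}\ell(\gamma)$ on good curves, you truncate $g$ at $M$ and absorb the curves meeting $\{g>M\}$ into an extra zero-modulus family; both devices serve the same purpose, and your explicit treatment of the density/extension and norm bookkeeping fills in details the paper leaves implicit.
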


\begin{proof}
If $f\in N^{1,\infty}(X)$, then there exists an $\infty-$weak upper
gradient $g\in L^\infty(X)$ of $f$. We denote $\Gamma_1$ the family
of curves for which $g$ is not an upper gradient for $f$. Note that
$\Mod_{\infty}\Gamma_1$=0. By Lemma \ref{modinfzero} there exists a
Borel measurable function $0\leq\varrho\in L^{\infty}(X)$ such that,
$\int_{\gamma}\varrho=+\infty\,\,\text{ for every
}\gamma\in\Gamma_1$ and $\|\varrho\|_{L^{\infty}}=0$.
 Consider $\rho_0=g+\varrho\in L^{\infty}(X)$
which is an upper gradient of $f$ and satisfies that
$\|\rho_0\|_{L^\infty}=\|g\|_{L^\infty}$. Note that
$\int_{\gamma}\rho_0=+\infty$ for all $\gamma\in\Gamma_1$ and that
by Lemma \ref{modinfzero} the family of curves
$\Gamma_2=\{\gamma\in\Upsilon:\int_{\gamma}\rho_0=+\infty\}$ has
$\infty-$modulus zero. Finally, consider the set $\{x\in
X:g(x)+\varrho(x)\geq\|\rho_0\|_{L^\infty}\}$ and define
$$
\rho(x)= \left\{
\begin{array}{ll}
\|\rho_0\|_{L^\infty}&\text{if $x\in X\backslash E,$}\\[5pt]
+\infty&\text{if $x\in E$.}
\end{array}
\right.
$$
Then $\rho$ is an upper gradient of $f$  and it satisfies that
$\|\rho\|_{L^{\infty}}=\|\rho_0\|_{L^\infty}=\|g\|_{L^\infty}$. Note
that if $\int_{\gamma}\rho<+\infty$, then the set
$\gamma^{-1}(+\infty)$ has measure zero in the domain of $\gamma$
(because otherwise $\int_{\gamma}\rho=+\infty$). Thus, if
$\int_{\gamma}\rho<+\infty$, we have in particular that
$\int_{\gamma}\rho=\|\rho\|_{L^{\infty}}\,\ell(\gamma)$. By Theorem
\ref{Shamun} there exists a set $F\subset X$ of measure $0$ and a
constant $K>0$ (depending only on $X$) such that for all $x,y\in
X\setminus F$ there exist a rectifiable curve $\gamma$ such that
$\int_{\gamma}\rho<+\infty$ and $\ell(\gamma)\leq Kd(x,y)$. Let now
$x,y\in X\backslash F$ and $\gamma$ be a rectifiable curve
satisfying the precedent conditions. Then
$$
|f(x)-f(y)|\leq\int_{\gamma}\rho\stackrel{(*)}{=}\|\rho\|_{L^{\infty}}\ell(\gamma)\leq
\|\rho\|_{L^{\infty}}K d(x,y).
$$
Then $f$ is $\|\rho\|_{L^{\infty}}K-$Lipschitz a.e. Thus,
$\LIP^{\infty}(X)=M^{1,\infty}(X)=N^{1,\infty}(X)$.
\end{proof}

\begin{remark}\em Note that if we would have chosen as upper gradient $\rho_0$
instead of $\rho$, the inequality $(*)$ might not be necessary true.
To see this, it is enough to define a function which is zero a.e.
and constant but finite on a set of zero measure.
\end{remark}

Our purpose now is to see under which conditions the spaces
$D^{\infty}(X)$ and $N^{1,\infty}(X)$ coincide. For that, we need
first to use the local version of the weak $p$-Poincaré inequality
(see for example Definition $4.2.17$ in \cite{Sha}).

\begin{defn}  \em
Let $1\leq p<\infty$. We say that $(X,d,\mu)$ supports a \em local
weak $p$-Poincaré inequality \em with constant $C_p$ if for every
$x\in X$, there exists a neighborhood $U_x$ of $x$  and
$\lambda\geq1$ such that whenever $B$ is a ball in $X$ such that
$\lambda B$ is contained in $U_x$, and $u$ is an integrable function
on $\lambda B$ with $g$ as its upper gradient in $\lambda B $, then
$$
\jint_{B(x,r)}|u-u_{B(x,r)}|\,d\mu\leq C_pr\Big(\jint_{B(x,\lambda
r)}g^p d\mu\Big)^{1/p}.
$$
\end{defn}
\vspace{2mm}
\begin{cor}\label{N=D}
Let $X$ be a complete metric space that supports a doubling Borel
measure $\mu$ which is non-trivial and finite on balls. If $X$
supports a local weak $p$-Poincar\'e inequality for $1\leq
p<\infty$. Then $N^{1,\infty}(X)=D^{\infty}(X)$ with equivalent
norms.
\end{cor}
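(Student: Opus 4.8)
The plan is to combine the chain of inclusions already obtained in Section~5 with a \emph{local} version of Theorem~\ref{Shamun}. We already know $\LIP^{\infty}(X)=M^{1,\infty}(X)\subseteq D^{\infty}(X)\subseteq N^{1,\infty}(X)$, together with the norm estimate $\|\cdot\|_{N^{1,\infty}}\leq\|\cdot\|_{D^{\infty}}$, so it suffices to show that every class of $N^{1,\infty}(X)$ admits a representative lying in $D^{\infty}(X)$ and that $\|\widetilde{f}\,\|_{D^{\infty}}\leq C\,\|f\|_{N^{1,\infty}}$ for a constant $C=C(X)$. Fix $f\in N^{1,\infty}(X)$ with an $\infty$-weak upper gradient $g\in L^{\infty}(X)$. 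Exactly as in the proof of Corollary~\ref{NLip}, using Lemma~\ref{modinfzero} one replaces $g$ by a genuine upper gradient $\rho\in L^{\infty}(X)$ of $f$ with $\rho\geq 0$, $\|\rho\|_{L^{\infty}}=\|g\|_{L^{\infty}}$, taking only the values $\|\rho\|_{L^{\infty}}$ and $+\infty$; in particular $\int_{\gamma}\rho<+\infty$ forces $\int_{\gamma}\rho=\|\rho\|_{L^{\infty}}\,\ell(\gamma)$.

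The key step is a localized form of Theorem~\ref{Shamun}: there is a Borel set $F\subseteq X$ with $\mu(F)=0$ --- namely the set $F=X\setminus\bigcup_{n}S_{n}$ from the proof of Theorem~\ref{Shamun}, which depends only on $\rho$, the maximal operator and the doubling measure --- such that for each $x_{0}\in X$ there are $r_{0}=r_{x_{0}}>0$ and a constant $K>0$, depending only on the doubling constant $C_{\mu}$ and the constant $C_{p}$ of the local Poincar\'e inequality, so that any two points $x,y\in B(x_{0},r_{0})\setminus F$ are joined by a rectifiable curve $\gamma$ with $\int_{\gamma}\rho<+\infty$ and $\ell(\gamma)\leq K\,d(x,y)$. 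To obtain this I would rerun the argument of Theorem~\ref{Shamun} inside the Poincar\'e-neighbourhood $U_{x_{0}}$, taking $x$ itself as the base point in the construction of the functions $u_{n,k}$ and $v_{n}$; after shrinking $r_{0}$ so that all the balls $B_{i}$ and their $\lambda_{x_{0}}$-dilates occurring in the telescoping estimate lie inside $U_{x_{0}}$, the same computation yields $|u_{n,k}(x)-u_{n,k}(y)|\leq K_{0}\,d(x,y)$ with $K_{0}=9C_{\mu}C_{p}$, whence $u_{n}(y)\leq v_{n}(y)\leq K_{0}\,d(x,y)$ and the curve $\gamma$ with $K=K_{0}+1$ is extracted as there. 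Here one uses that $X$ is proper (being complete and doubling), so closed balls are compact, which supplies the local compactness needed for the Ascoli--Arzel\`a step (cf.\ the remark following Theorem~\ref{Shamun}), and that $X$ is locally quasi-convex by a localization of Theorem~$17.1$ in \cite{Che}, which makes the curve families $\varGamma_{xy}$ nonempty and the $u_{n,k}$ Lipschitz near $x_{0}$.

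Granting this, for each $x_{0}$ the estimate $|f(x)-f(y)|\leq\int_{\gamma}\rho=\|\rho\|_{L^{\infty}}\ell(\gamma)\leq K\|\rho\|_{L^{\infty}}d(x,y)$ shows that $f$ is $K\|\rho\|_{L^{\infty}}$-Lipschitz on $B(x_{0},r_{0})\setminus F$. Since $\mu$ is non-trivial on balls, this set is dense in $B(x_{0},r_{0})$, so $f$ extends uniquely to a $K\|\rho\|_{L^{\infty}}$-Lipschitz function on $B(x_{0},r_{0})$; these local extensions agree $\mu$-a.e.\ on overlaps, hence, being continuous, agree everywhere there, so they glue to a locally Lipschitz $\widetilde{f}:X\to\R$ with $\widetilde{f}=f$ $\mu$-a.e. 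For every $x_{0}$ one then gets $\Lip\widetilde{f}(x_{0})\leq K\|\rho\|_{L^{\infty}}=K\|g\|_{L^{\infty}}\leq K\|f\|_{N^{1,\infty}}$, so $\|\Lip\widetilde{f}\,\|_{\infty}\leq K\|f\|_{N^{1,\infty}}$; combined with $\|\widetilde{f}\,\|_{\infty}=\|f\|_{L^{\infty}}\leq\|f\|_{N^{1,\infty}}$ this gives $\widetilde{f}\in D^{\infty}(X)$ and $\|\widetilde{f}\,\|_{D^{\infty}}\leq\max\{1,K\}\|f\|_{N^{1,\infty}}$. Together with $\|\cdot\|_{N^{1,\infty}}\leq\|\cdot\|_{D^{\infty}}$ this yields $D^{\infty}(X)=N^{1,\infty}(X)$ with equivalent norms.

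The main obstacle is precisely the localized Theorem~\ref{Shamun}: one must check that restricting the whole construction to the neighbourhood $U_{x_{0}}$ preserves the uniformity of the constant, i.e.\ that only balls whose $\lambda_{x_{0}}$-dilates lie inside $U_{x_{0}}$ enter the telescoping sum, that the local quasi-convexity constant does not enter the final Lipschitz bound, and that $C_{p}$ --- rather than $\lambda_{x_{0}}$ or the base point --- governs the resulting estimate. Everything else is a routine adaptation of the arguments already carried out for Theorem~\ref{Shamun} and Corollary~\ref{NLip}.
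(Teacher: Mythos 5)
Your proposal follows essentially the same route as the paper: replace the $\infty$-weak upper gradient by the two-valued upper gradient $\rho$ exactly as in Corollary \ref{NLip}, invoke a localized version of Theorem \ref{Shamun} to connect points of a small ball minus a null set by short curves along which $\int_{\gamma}\rho<+\infty$, and deduce the local Lipschitz bound $K\|\rho\|_{L^{\infty}}$ giving $\|\Lip f\|_{\infty}\leq K\|g\|_{L^{\infty}}$. You are in fact more explicit than the paper on the two points it leaves to the reader --- how the telescoping-ball argument localizes inside the Poincar\'e neighbourhood, and the need to pass to the continuous representative $\widetilde{f}$ obtained by extension from the dense set $B(x_0,r_0)\setminus F$ --- but these are refinements of the same argument, not a different one.
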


\begin{proof}
If $f\in N^{1,\infty}(X)$, then there exists an $\infty-$weak upper
gradient $g\in L^\infty(X)$ of $f$. We construct in the same way as
in Corollary \ref{NLip} an upper gradient $\rho$ of $f$ which
satisfies $\|\rho\|_{L^{\infty}}=\|g\|_{L^\infty}$,
$\int_{\gamma}\rho\geq |f(\gamma(0))-f(\gamma(L))|$ for all
$\gamma\in\Upsilon$ and $\int_{\gamma}\rho=
\|\rho\|_{L^{\infty}}\ell(\gamma)$ for all $\gamma\in\Upsilon$ such
that $\int_{\gamma}\rho<+\infty$. Fix $x\in X$. Using a local
version of Theorem \ref{Shamun} we obtain that there exists a
neighborhood $U^x$ and a constant $K>0$ (depending only on $X$) such
that for almost every $z,y\in U^x$, there exist a rectifiable curve
$\gamma$ connecting $z$ and $y$ such that
$\int_{\gamma}\rho<+\infty$ and $\ell(\gamma)\leq Kd(z,y)$. Let now
$y\in U^x$ and $\gamma$ a rectifiable curve satisfying the precedent
conditions. Then
$$
|f(x)-f(y)|\leq\int_{\gamma}\rho=\|\rho\|_{L^{\infty}}\ell(\gamma)\leq
\|\rho\|_{L^{\infty}}K d(x,y).
$$

Under the hypothesis of the corollary it can be easily checked that
$f$ is continuous on $X$ and so, there is no obstruction to take the
superior limit
$$\limsup_{y\to x}\frac{|f(x)-f(y)|}{d(x,y)}.$$ Thus, we deduce that $\Lip f(x)\leq
K\|\rho\|_{L^{\infty}}$. Since this is true for each $x\in X$, we
have $\|\Lip f\|_{\infty}\leq K\|\rho\|_{L^{\infty}}<+\infty$ and we
conclude that $f\in D^{\infty}(X)$.
\end{proof}

Observe that under the hypothesis of Corollary \ref{N=D} we have
that $X$ is a locally radially quasiconvex metric space. We see
throughout a very simple example that in general there exist metric
spaces $X$ for which the following holds:
$$
\LIP^{\infty}(X)=M^{1,\infty}(X)\varsubsetneq D^{\infty}(X)=
N^{1,\infty}(X).
$$
Indeed, consider the metric space $(X,d,\lambda)$ where
$X=\C\backslash\{\rm Re(z)\geq 0, |Im(z)|\leq 1/2\}$, $d$ is the
metric induced by the Euclidean one and $\lambda$ denotes the
Lebesgue measure. Since $X$ is a complete metric space that supports
a doubling measure and a local weak $p$-Poincar\'e inequality for
any $1\leq p<\infty$, by Corollary \ref{N=D}, we have that
$D^{\infty}(X)=N^{1,\infty}(X)$. Let $f(z)=\arg(z)$, for each $z\in
X$. One can check that $f\in D^{\infty}(X)=N^{1,\infty}(X)$.
However, $f\notin \LIP^{\infty}(X)$, and so $
\LIP^{\infty}(X)\varsubsetneq D^{\infty}(X)= N^{1,\infty}(X). $

\centerline{\sc Acknowledgements}

It is a great pleasure to thank Professors Jose F. Fernando and M.
Isabel Garrido for many valuable conversations concerning this
paper.

\end{example}
\end{document}